\documentclass[onefignum,onetabnum]{siamonline220329}



\usepackage{amsmath}
\usepackage{mathtools}
\usepackage{amsfonts,amssymb}
\usepackage{empheq}
\usepackage{graphicx}
\usepackage{epstopdf}
\usepackage{algorithmic}
\usepackage{subcaption}  
\usepackage{float}

\usepackage{multicol}
\usepackage{rotating}
\usepackage{comment}

\makeatletter
\newcommand\Label[1]{&\refstepcounter{equation}(\theequation)\ltx@label{#1}&}
\makeatother

\allowdisplaybreaks
\ifpdf
  \DeclareGraphicsExtensions{.eps,.pdf,.png,.jpg}
\else
  \DeclareGraphicsExtensions{.eps}
\fi
\usepackage[numbers, sort]{natbib}
\usepackage{bm}

\usepackage{xcolor, framed}

\newenvironment{myframedeq}[1][\linewidth]{\FrameSep=4pt\abovedisplayskip=0pt\belowdisplayskip=0pt
\framed\hsize=#1\leftskip=\dimexpr(\textwidth-#1)/2\relax}
{\endframed}

\newcommand{\rulesep}{\unskip\ \vrule\ }

\newcommand{\R}{\mathbb{R}}
\newcommand{\F}{\mathbb{F}}
\newcommand{\EE}{\mathcal{E}}
\newcommand{\PP}{\mathcal{P}}
\newcommand{\MM}{\mathcal{M}}
\newcommand{\TT}{\mathcal{T}}
\newcommand{\QQ}{\mathcal{Q}}
\newcommand{\BB}{\mathcal{B}}

\DeclareMathOperator*{\tr}{tr }
\DeclareMathOperator*{\asym}{Asym}
\DeclareMathOperator*{\sym}{Sym}

\DeclareMathOperator*{\tmp}{tmp}
\DeclareMathOperator*{\tot}{tot}
\DeclareMathOperator*{\dist}{dist}
\DeclareMathOperator*{\jump}{Jump}
\DeclareMathOperator*{\imp}{Imp}
\DeclareMathOperator*{\argmin}{arg\,min}

\newcommand{\pluseq}{\mathrel{{+}{=}}}
\newcommand{\minuseq}{\mathrel{{-}{=}}}

\usepackage{enumitem}
\setlist[enumerate]{leftmargin=.5in}
\setlist[itemize]{leftmargin=.5in}


\newsiamremark{remark}{Remark}
\newsiamremark{hypothesis}{Hypothesis}
\crefname{hypothesis}{Hypothesis}{Hypotheses}
\newsiamthm{claim}{Claim}

\headers{Lie Group Variational Collision Integrators for a Class of Hybrid Systems}{K. Tran and M. Leok}

\title{Lie Group Variational Collision Integrators for a Class of Hybrid Systems\thanks{Submitted to the editors 10/20/2023.
\funding{This work was supported in part by NSF under grants DMS-1345013, DMS-1813635, CCF-2112665, DMS-2307801, by AFOSR under grant FA9550-18-1-0288, FA9550-23-1-0279, and by the DoD under grant 13106725 (Newton Award for Transformative Ideas during the COVID-19 Pandemic).}}}

\author{Khoa Tran\thanks{Department of Mathematics, University of California at San Diego, La Jolla, CA 
  (\email{k2tran@ucsd.edu}).}
\and Melvin Leok\thanks{Department of Mathematics, University of California at San Diego, La Jolla, CA  
  (\email{mleok@ucsd.edu}).}
}

\usepackage{amsopn}

\newfloat{alg}{luc}{Algorithm}


\ifpdf
\hypersetup{
  pdftitle={Lie Group Variational Collision Integrators for a Class of Hybrid Systems},
  pdfauthor={K.Tran, M. Leok}
}
\fi




\begin{document}

\maketitle

\begin{abstract}
The problem of 3-dimensional, convex rigid-body collision over a plane is fully investigated; this includes bodies with sharp corners that is resolved without the need for nonsmooth convex analysis of tangent and normal cones. In particular, using nonsmooth Lagrangian mechanics, the equations of motion and jump equations are derived, which are largely dependent on the collision detection function. Following the variational approach, a Lie group variational collision integrator (LGVCI) is systematically derived that is symplectic, momentum-preserving, and has excellent long-time, near energy conservation. Furthermore, systems with corner impacts are resolved adeptly using $\epsilon$-rounding on the sign distance function (SDF) of the body. Extensive numerical experiments are conducted to demonstrate the conservation properties of the LGVCI.
\end{abstract}

\begin{keywords}
discrete variational mechanics, variational integrators, Lie group integrators, collisions, nonsmooth impacts, hybrid systems
\end{keywords}

\begin{MSCcodes}
37M15, 65P10, 70F35, 70G65, 34A38, 49J52
\end{MSCcodes}

\section{Introduction}
\label{sect:Intro}
	Hybrid systems are dynamical systems that exhibits both continuous and discrete dynamics. The state of a hybrid system changes either continuously by the flow described by differential equations or discretely following some jump conditions.  A canonical example of a hybrid system is the bouncing ball, imagined as a point mass, over a horizontal plane. The extension of this problem to 3-dimensions, wherein the bouncing body is rigid and convex, is rather complex, especially in the case of sharp corner impacts; in fact, these systems have unilateral constraints that describe the collision surface. We study such problems with perfectly elastic collisions and the Lie group variational collision integrators (LGVCI) are derived following the approaches introduced in \citep{fetecau2003nonsmooth} and \citep{lee2007lie}. The advantage of these frameworks is that they yield a global description of the system, in contrast to local representations such as Euler angles \citep{markeev2008,polukoshko2013}. Furthermore, in high-precision physics engine and graphics dynamics, the integrator becomes a foundation, and its extensions with inelastic collisions and friction can be derived to fully actualize the engine. This is also naturally applicable to problems in optimal control with similar nonlinear manifold constraints \citep{oberblobaum2011DMOC, pekarek2012variationalnonsmooth, pekarek2013projected, pekare2014projection, leyendecker2010optimal}. In particular, these constraints and optimal control problems arise in robotics \citep{pekarek2007discrete, duruisseaux2023LieFVIN, werner2017optimal, werner2018structure, leyendecker2013structure} and multi-body dynamics \citep{leyendecker2007dmocConstrainedMulti, johnson2014discontinuous}.
	
	There is an extensive literature on various extensions of the bouncing ball example. In fact, it is a subset of the broader, classical field of rigid-body dynamics, which has a strong emphasis on collisions, contact, and friction. Due to its practical importance and various theoretical challenges, there have been extensive studies which have been summarized in textbooks \citep{routh1897dynamics, brach2001mechanical, goldsmith1960impact, johnson1985contact, stronge2000impact, PfeifferGlocker1996MultiBody, christoph2001nonsmooth, brogliato2016nonsmooth}, some of which are considered classical references in the field. However, there is a notable absence in the literature on global descriptions of rigid-body dynamics with collisions, e.g., configurations of the bodies via Lie groups. This is due to the fact that systems with collisions are not continuous because there is an instantaneous jump in momenta after each collision. Consequently, researchers have focused mostly on discrete impact solutions of systems with rigid-bodies, dating back to Brach's  work in 1989 \citep{brach1984friction,brach1989rigid}. 
	
	There are, of course, more general studies of impacts for rigid-bodies, for example, the case of extremely small deformations (also known as ``hard'' collisions) at the contacting points; the research on this case stem from three main theories regarding the compression and restitution phases brought forth by Newton in 1833 \citep{newton1833philosophiae}, Poisson in 1838 \citep{poisson1838traite}, and Stronge as early as 1990 \citep{stronge1989rigid, stronge1991unraveling, stronge1994planar, stronge2000contact}. Actually, these ``extremely small'' deformations are one of the three main categories of collision problems indicated by Chatterjee and Ruina in \citep{chatterjee1998algebraic} and Najafabadi et al. in \citep{najafabadi2008generalization}. The first other category is ``small'' deformation collision which can be resolved using compliant contact modeling such as Hertz's model \citep{hertz1896miscellaneous} and non-linear damping model introduced by Hunt and Crossley \citep{hunt1975damping}; the second is ``large'' deformation collisions which require tools from continuum mechanics, e.g., finite element methods \citep{sha2012nonlinear,dintwa2008finite, kitamura2002FEM,wei1995finite}. In short, the literature and history of rigid-body and contact dynamics modeling are extensive, so one may refer to the following surveys and reviews in \citep{gilardi2002literature, fetecau2003nonsmooth, skrinjar2018review, flores2022review} for a more complete picture. 
	
	Our approach to the collision problem for a convex rigid-body is based on the variational methodology and integrators developed in Fetecau et al.~\citep{fetecau2003nonsmooth}. They specifically develop the theory for nonsmooth Lagrangian mechanics, which automatically gives a symplectic-momentum preserving integrator.  Furthermore, near impacts,  a collision point and time are determined to solve for the next configurations using the variational method as well. This approach was extended to develop collision algorithms for dissipative systems \citep{limebeer2020variational, portillo2017energy, chen2022stochastic} that take advantage of the near-energy preserving properties of the variational integrators in the absence of dissipation in order to more reliably track the energy decay of dissipative systems. The case of nonsmooth field theories was considered in \citep{demoures2016multisymplectic}, which is built on multisymplectic field theories \citep{gotay1997momentum} and multisymplectic variational integrators \citep{marsden1998multisymplectic}. 
	
	This paper, however, extends the work of Fetecau et al. to the 3-dimensional case and  explicitly uses the special Euclidean group to give a complete description of the system away from and during impacts. In addition, it investigates the equations of motion and jump conditions at impact for a class of hybrid systems, in which a convex rigid-body is bouncing elastically over a horizontal/tilted plane. The corresponding Lie group variational collision integrators (LGVCI) are derived, and extensions of the algorithm are developed for rigid-bodies with sharp corners, drawing from the tools of solid geometry. In particular, the signed distance functions \citep{osher2003signed} will be utilized to cleverly regularize our hybrid systems at corner impacts; consequently, nonsmooth analysis and differential inclusions may be avoided entirely. This work provides the foundation for future directions involving dissipation, multi-body, and articulated rigid-body collisions.
		
	\subsection{Contributions}
	\label{sect:IntroContributions}
	We first investigate an ellipsoid bouncing elastically on the horizontal plane. The equations of motion and the jump conditions during impacts are derived, and they are expressed in terms of the signed distance function between the ellipsoid and plane. Furthermore, the signed distance function allows us to detect collisions and it has the necessary regularity for us to describe the jump conditions. For the collision response, jump conditions give a unique, instantaneous configuration after each impact time based solely on the instantaneous configuration before and the tangent space to the configuration at impact. 
		
	The paper also develops the LGVCI for our hybrid system. The integrators adopt the usual discrete flow for configurations away from the impact points, and the discrete equations are modified to describe the discrete flow near and at the impacts. Since the integrators are based on variational integrators, they are symplectic, momentum-preserving, and exhibit excellent long-time, near energy conservation. In addition, they respect the Lie group structure of the configuration space. Numerical simulations of the triaxial ellipsoid are presented, and we discuss the \textit{Zeno phenomenon}, wherein a hybrid system makes an infinite number of jumps -- collisions in this case -- within a finite time. 

	We demonstrate how to extend the model problem, by considering tilted planes and the unions and/or intersections of convex rigid-bodies. We further develop a sensible and practical regularization for the collision response of convex rigid-bodies with sharp corners that avoids the need for nonsmooth convex analysis, tangent, and normal cones. Since the tangent space of the configuration is not well-defined for corner impacts, the method introduces a regularization by smoothing the boundary of the bodies by a small $\epsilon$-parameter to handle such collisions. We provide numerical results for case of tilted planes, unions of two ellipsoids, and cube. 
	\subsection{Organization}
	\label{sect:IntroOrganization}
	The paper is organized as follows: In Section \ref{sect:Problem}, background material and a description of the model problem is given. The theory of nonsmooth Lagrangian mechanics and the corresponding collision variational discretization are presented in Section \ref{sect:Backg}. In Section \ref{sect:Dynamics}, we derive the full equations of motion with jump conditions at the point of collision, and then the full variational integrators and algorithms are derived in Section \ref{sect:LGVCI}. We provide the extension to titled planes and more general rigid-bodies in Section \ref{sect:Extensions}. Finally, numerical experiments for four different hybrid systems are explored, and further discussions of the algorithm are given in Section \ref{sect:NumericalExp}. 

\section{The Problem: Dynamics of a Bouncing Ellipsoid}
\label{sect:Problem}
	We want to analyze the dynamics of an ellipsoid bouncing elastically on the plane under the effect of gravity. Hereafter, we will refer to this as the \textit{dynamics of a bouncing ellipsoid}. Some relevant background will be introduced to describe our system, which will provide the necessary foundation to develop our proofs, computations, and generalizations of our theory. 
	
	\subsection{Notation}
	\label{sect:ProblemNotation}
	The notation we adopt for linear algebra and the configuration space of Special Euclidean group $SE(3)$ is presented here.
	
		\subsubsection{Skew Map, Trace, and Inner products}
		\label{sect:ProblemNotation1}
		Recall that the Lie algebra $\mathfrak{so}(3)$ of the rotation group $SO(3)$ is the set of skew-symmetric matrices. Consider the \textit{skew map} $S : \R^3 \to \mathfrak{so}(3)$ defined by 
		$
			S(\bm{x}) =
			\begin{bsmallmatrix}
				0		&	-x_3	&	x_2		\\
				x_3		&	0		&	-x_1	\\
				-x_2	&	x_1		&	0
			\end{bsmallmatrix},
		$
		where $S(\bm{x})\bm{y} = \bm{x} \times \bm{y} $ for any $\bm{x},\bm{y} \in \R^3$. Also, one can show that the followings hold: 
		
		\begin{subequations}
		\label{eqn:skew_props}
		\noindent
			\begin{minipage}{0.5\textwidth}
				\begin{align}
					S(\bm{x})^T 
						&= -S(\bm{x}),	\label{eqn:skew1}\\
					S(\bm{x})^2
						&= \bm{x}\bm{x}^T - \|\bm{x}\|^2 I_3,	\label{eqn:skew2}
				\end{align}
			\end{minipage}%
			\begin{minipage}{0.5\textwidth}
				\begin{align}
					S(\bm{x} \times \bm{y} )
						&= S(\bm{x})S(\bm{y} ) - S(\bm{y} )S(\bm{x}), \label{eqn:skew3}	\\
					S(R\bm{x})
						&= RS(\bm{x})R^T,	\label{eqn:skew4}
				\end{align}
			\end{minipage}
			\vskip1em
		\end{subequations}
		\noindent for all $\bm{x},\bm{y} \in \R^3, R \in SO(3)$, where $I_3 \in \R^{3 \times 3}$ is the identity matrix and $\| \cdot \|$ is the Euclidean norm. Note that $S$ is an isomorphism with the inverse $S^{-1}(\Omega)^T = (\Omega_{32},\Omega_{13},\Omega_{21})$ for any $\Omega \in \mathfrak{so}(3)$.
	
		We introduce the following maps: $\asym : \R^{3\times 3} \to \R^{3 \times 3}$ and $\sym : \R^{3 \times 3} \to \R^{3 \times 3}$, which are defined respectively by $\asym(A) = A - A^T$ and $\sym(A) = A + A^T$ for any $A \in \R^{3 \times 3}$. The trace of a matrix is denoted by $\tr [A] = \sum_{i=1}^3 A_{ii}$, and satisfies the following property: 
		\begin{proposition}
		\label{prop:traceProp}
		For all skew-symmetric matrices $\Omega \in \R^{3 \times 3}$, $\tr [\Omega^T P] = 0$ if and only if $P \in \R^{3 \times 3}$ is symmetric, i.e., $\asym(P) = 0$.
		\end{proposition}
		This fact provides an insight into the usual inner product of $\R^{3 \times 3}$ where one of the matrices is skew-symmetric. Recall that the inner product $\langle \cdot, \cdot \rangle : \R^{3 \times 3} \times \R^{3 \times 3} \to \R$ is defined by $\langle A, B \rangle = \tr[B^T A]$. Suppose that $\Omega$ is skew-symmetric and $A \in \R^{3 \times 3}$. Since $A = \frac{1}{2}\sym(A) + \frac{1}{2}\asym(A)$, we have that by Proposition \ref{prop:traceProp},
		\begin{equation}
		\label{eqn:trIPtoSkewIP}
			\langle A, \Omega \rangle = \frac{1}{2} \tr[\Omega^T \asym(A)].
		\end{equation}
		 Note that $\asym(A)$ is also skew-symmetric, so this naturally leads us to the inner product on $\mathfrak{so}(3)$. In fact, the Lie algebra is a linear space with the inner product $\langle \cdot, \cdot \rangle_S : \mathfrak{so}(3) \times \mathfrak{so}(3) \to \R$, which is induced by the standard inner product on $\R^3$ via the skew map,
		\begin{equation}
		\label{eqn:lieinnerprod}
			\langle \Omega_1, \Omega_2 \rangle_S = \bm{\omega}_2^T \bm{\omega}_1,
		\end{equation}
		where $\Omega_1,\Omega_2 \in \mathfrak{so}(3)$ and $\bm{\omega}_1,\bm{\omega}_2 \in \R^3$ such that $S(\bm{\omega}_i) = \Omega_i$. It can be shown that $\langle \Omega_1, \Omega_2 \rangle_S =\tfrac{1}{2} \tr [\Omega_2^T \Omega_1]$, and by \eqref{eqn:trIPtoSkewIP}, the inner products on $\R^{3 \times 3}$ and $\mathfrak{so}(3)$ are related by $\langle A, \Omega \rangle = \langle \asym(A), \Omega \rangle_S$.
	
		\subsubsection{Special Euclidean Group \texorpdfstring{$SE(3)$}{SE3}}
		\label{sect:ProblemNotation2}
	Given our main goal, it is natural to describe the translation and rotation of the ellipsoid using the \textit{Special Euclidean group} $SE(3)$ as our configuration space. Recall that $SE(3)$ is the Lie group defined by
		\begin{align*}
			SE(3) 
			= \{ (\bm{x},R) \in \R^3 \times GL(3) \mid R^TR = RR^T = I \text{ and } \det(R) = 1 \}
			= \R^3 \rtimes SO(3),
		\end{align*}
		where $\rtimes$ is the semidirect product. The semidirect product structure of $SE(3)$ can be conveniently encoded in terms of homogeneous transformations
			$G =
			\begin{bsmallmatrix}
				R	&	\bm{x}	\\
				0	&	1
			\end{bsmallmatrix}$
		where the group operation is defined by the usual matrix multiplication. This allows $SE(3)$ to be viewed as an embedded submanifold in $GL_4(\R)$. Furthermore, its action on $\R^3$ is given by matrix-vector product once we embed $\R^3\hookrightarrow\R^3 \times \{1\} \subset \R^4$:
			\[
				\begin{bmatrix}
					R_2	&	\bm{x_2}	\\
					0	&	1
				\end{bmatrix}
				\begin{bmatrix}
					R_1	&	\bm{x_1}	\\
					0	&	1
				\end{bmatrix}
				=
				\begin{bmatrix}
					R_2R_1	&	R_2\bm{x_1}	+ \bm{x_2}\\
					0		&	1
				\end{bmatrix},
				\quad \text{and} \quad
				\begin{bmatrix}
					R	&	\bm{x}	\\
					0	&	1
				\end{bmatrix}
				\begin{bmatrix}
					\bm{z}	\\
					1
				\end{bmatrix}
				=
				\begin{bmatrix}
					R \bm{z} + \bm{x}	\\
					1
				\end{bmatrix},
			\]
		where $R, R_1, R_2 \in SO(3)$ and $\bm{z}, \bm{x}, \bm{x_1}, \bm{x_2} \in \R^3$. As a result, $(\bm{x},R) \in SE(3)$ represents a configuration of the rigid-body where $\bm{x}$ is the location of the origin of the body-fixed frame relative to the inertial frame and $R$ as the attitude of the body. In particular, if $\bm{\rho} \in \R^3$ is a vector expressed in the body-fixed frame, then $\bm{x} + R\bm{\rho} $ is the same vector expressed in the reference frame.
	
		Furthermore, the Lie algebra $\mathfrak{se}(3)$ is given by $\mathfrak{se}(3) = \{ (\bm{y},\Omega) \mid \bm{y} \in \R^3 \text{ and } \Omega \in \mathfrak{so}(3) \}$, and its corresponding homogeneous representation is
		$			
			V
			=
			\begin{bsmallmatrix}
				\Omega	&	\bm{y}	\\
				0		&	0
			\end{bsmallmatrix}.
		$
		It has an induced inner product from the standard inner product of $\R^3$: For all $(\bm{y}_1,\Omega_1), (\bm{y}_2,\Omega_2) \in \mathfrak{se}(3)$,
		\begin{equation}
			(\bm{y}_1,\Omega_1) \cdot_S	 (\bm{y}_2,\Omega_2) 
			= \bm{y}_2^T \bm{y}_1 + \langle \Omega_1, \Omega_2 \rangle_S
			= \bm{y}_2^T \bm{y}_1 + \tfrac{1}{2}\tr[\Omega_2^T\Omega_1].
		\end{equation}
		
	\subsection{Distance between an arbitrary ellipsoid and the plane}
	\label{sect:ProblemDistanceForm}
	In order to simulate the dynamics of a bouncing ellipsoid, it is crucial to be able to perform \textit{collision detection} between the ellipsoid and plane for each integration step. If the distance between the two is greater than zero, the next integration step is considered; if it is less than zero, we discard the current step and utilize a rootfinder to find the integration step that advances the solution to the impact point and time. Then, we use the variational collision integrator at the impact point to apply the discrete jump conditions, and then take the remainder of the integration step.
	
	 We briefly introduce the notation for the ellipsoid and plane, and then the formula for the collision detection function will be given. This formula can be derived using inversive geometry and/or constrained optimization. For practicality, the plane is fixed as the horizontal plane defined by $\PP = \{ \bm{z} \in \R^3 \mid \bm{n}^T\bm{z} + 0 = 0 \}$ where $\bm{n}^T = (0,0,1)$. Now, suppose that $a$, $b$, $c > 0$ and consider $f_\EE : \R^3 \to \R$ defined by
	\begin{equation}
	\label{eqn:fE}
		f_\EE(\bm{z}) 
		= \frac{z_1^2}{a^2} + \frac{z_2^2}{b^2} + \frac{z_3^2}{c^2}
		= \bm{z}^T (I_\EE^{-1})^2 \bm{z},
		\qquad \text{where } I_\EE = \text{diag}(a,b,c).
	\end{equation}
	Let us write the standard ellipsoid centered at the origin as $\EE(a,b,c) = \{ \bm{z} \in \R^3 \mid f_\EE(\bm{z}) \leq 1 \}$ with the shorthand notation $\EE$ when the lengths of the semiaxes are understood. Let $(\bm{x},R) \in SE(3)$ denote the configuration of the ellipsoid, where $\bm{x}$ is the center of mass and $R$ is the attitude of the ellipsoid. Consider the map $T_{(\bm{x},R)} : \R^3 \to \R^3$ defined by $T_{(\bm{x},R)}(\bm{z}) = R\bm{z} + \bm{x}$, which represents the action of $SE(3)$ on $\R^3$. Then, the arbitrary ellipsoid is simply the image of the map, denoted by $\EE' = T_{(\bm{x},R)}(\EE)$.
	
	For a strictly positive distance between the plane and ellipsoid, their intersection is empty. Thus, $\EE' \cap \PP = \emptyset$ and $x_3 > 0$ since the center of mass of the ellipsoid is always above the horizontal plane for our system. In the case that the distance is zero, their intersection must be a singleton since $\PP$ is closed and convex and $\EE'$ is compact and strictly convex. Essentially, this is one of the key assumptions of our system to reduce complexity and ensure uniqueness of the ellipsoid's trajectory based on the variational approach. The empty intersection condition gives us the distance formula and a condition:
	\begin{equation}
	\label{eqn:BouncingEllipsoidRaw}
		d_2(\EE',\PP) = \min \{ |\bm{n}^T \bm{x}| \pm \|I_\EE R^T \bm{n} \| \}
		\qquad \text{and} \qquad
		\|I_\EE R^T \bm{n} \| < |\bm{n}^T \bm{x}|,
	\end{equation}
	Note that the inequality $\|I_\EE R^T \bm{n} \| < |\bm{n}^T \bm{x}|$ is equivalent to the condition $\EE' \cap \PP = \emptyset$. Furthermore, given $\bm{n}^T \bm{x} = x_3 > 0$ with the inequality, the minimum value is chosen with the minus sign. As a result, we have the following proposition for our hybrid system:
	\begin{proposition}
	\label{prop:distanceFunction}
		Let $\Phi : SE(3) \to \R$ be the \textbf{collision detection function}, which is defined by
		\begin{equation}
		\label{eqn:distBouncingEllipsoid}
			\Phi(\bm{x},R) =\bm{n}^T \bm{x} - \|I_\EE R^T \bm{n} \|,
		\end{equation}
		where $\bm{n}$ is the normal vector of the plane $\PP$ and $I_\EE = \text{diag}(a,b,c)$. Let $(\bm{x},R) \in SE(3)$, so that $\EE' = T_{(\bm{x},R)}(\EE)$ is the arbitrary ellipsoid. Assume that $\EE' \cap \PP = \emptyset$ and $x_3 > 0$, then $d_2(\EE',\PP) = \Phi(\bm{x},R)$.
	\end{proposition}	
	The collision detection function $\Phi$ allows us to characterize the admissible set of configurations for the bouncing ellipsoid system. Namely, if the center of mass is below the plane ($x_3 < 0$), we get $\Phi(\bm{x},R) < 0$. In the case that the ellipsoid intersects the plane, the inequality becomes $\|I_\EE R^T \bm{n} \| \geq x_3$, and so $\Phi(\bm{x},R) \leq 0$. Of course, equality here implies that the ellipsoid makes an impact on the plane without \textit{interpenetration} -- a description where no body element of the ellipsoid crosses the plane. Therefore, the signed distance function satisfies $\Phi(\bm{x},R) \geq 0$ for all the ``allowable'' configurations $(\bm{x},R) \in SE(3)$, which we will define more rigorously next. 
	
\section{Background}
\label{sect:Backg}
	Before the variational collision integrators are derived for the bouncing ellipsoid, we will first give an overview of the main ideas and techniques in both the continuous-time and discrete-time setting following the approach developed in \citep{fetecau2003nonsmooth,pekarek2010variational}. Specific results for the ellipsoid dynamics will be stated in this section, and this will provide us with the tools to construct the necessary integrators in the subsequent sections.
	
	\subsection{Continuous-Time Model.}
	\label{BackgCont}
	Let $Q$ be the configuration manifold. Let the submanifold $C \subset Q$ be the \textit{admissible set}, which consists of all the possible configurations where no contact occurs. The \textit{contact set} $\partial C$ consists of all the points at which contact occurs without any interpenetration.
		
		Consider a Lagrangian $L : TQ \to \R$. In the nonautonomous approach, we introduce a parameterized variable $\tau \in [0,1]$ for the time and trajectories in $Q$ with mappings $c_t(\tau)$ and $c_q(\tau)$, respectively. Assume that there is one contact point occurring at $\tau_i \in (0,1)$ for simplicity, but the theory can be easily extended for multiple contacts. Now, let us consider the smooth manifold \textit{path space} in \citep{fetecau2003nonsmooth}: $\MM = \TT \times \QQ([0,1], \tau_i, \partial C,Q)$ where
	\begin{align*}
		\TT
		&= \{ c_t \in C^\infty([0,1],\R) \mid c_t' > 0 \text{ in } [0,1] \},	\\
		\QQ([0,1], \tau_i, \partial C,Q)
		&= \{ c_q : [0,1] \to Q \mid c_q \in C^0, \text{ piecewise } C^2,	\\
			& \qquad \text{ has one singularity at } \tau_i, \text{ and } c_q(\tau_i) \in \partial C \}.
	\end{align*}
	Then $c \equiv (c_t,c_q) \in \MM$, and the \textit{associated curve} is $q : [c_t(0), c_t(1)] \to Q$ given by $q(t) \coloneqq c_q(c_t^{-1}(t))$. As a result, we are essentially considering paths on an \textit{extended configuration manifold} $Q_e = \R \times Q$. Now, the \textit{extended action map} $\tilde{\mathfrak{S}} : \MM \to \R$ is given by
	\begin{equation}
	\label{eqn:extActionMap}
		\tilde{\mathfrak{S}}(c)= \int_0^1 \tilde{L}(c(\tau),c'(\tau)) \, d \tau,
	\end{equation}
	where $\tilde{L} : TQ_e \to \R$ is defined as
	\begin{equation}
	\label{eqn:extLMap}
		\tilde{L}(c(\tau),c'(\tau)) = L\left(c_q(\tau), \frac{c_q'(\tau)}{c_t'(\tau)}\right)c_t'(\tau).
	\end{equation}
	The factor of $c_t'$ appears due to the Jacobian from the change of coordinates $s = c_t(\tau)$ for the usual action of the associated curve $\mathfrak{S}(q) = \int_{c_t(0)}^{c_t(1)} L(q(s),\dot{q}(s)) \, ds$.
	
	Lastly, we introduce
	$
		\ddot{Q}_e = \left\{ \frac{d^2 c}{d \tau^2} (0) \in T(TQ_e) \mathrel{\bigg|} c \in C^2([0,1],Q_e)\right\}
	$
	as the the \textit{extended configuration manifold} of $Q_e$. This will help us derive the equations of motion and jump conditions after taking variations of the action:
	\begin{theorem}
	\label{thm:ContinuousThm}
		Given $C^k$ Lagrangian $L$, $k \geq 2$, there exist a unique $C^{k-2}$ \textbf{Euler--Lagrange derivative} $EL : \ddot{Q}_e \to T^{*}Q_e$ and a unique $C^{k-1}$ \textbf{Lagrangian one-form} $\Theta_L$ on $TQ_e$ such that for all variations $\delta c \in T_c \MM$, the variation of the extended action is given by
		\begin{equation}
		\label{eqn:actionVariation}
			d\tilde{\mathfrak{S}} \cdot \delta c
			= \int_0^{\tau_{i}} EL(c'') \cdot \delta c \, d \tau + \int_{\tau_{i}}^1 EL(c'') \cdot \delta c \, d \tau + \Bigl.\Theta_L(c') \cdot \hat{\delta} c \Bigr \lvert_{0}^{\tau_{i}^-} + \Bigl.\Theta_L(c') \cdot \hat{\delta} c \Bigr \lvert_{\tau_{i}^+}^{1},
		\end{equation}
	where, in coordinates,
	\begin{subequations}
	\label{eqn:ELFormality}
		\begin{align}
			EL(c'')
				&= \left[ \frac{\partial L}{\partial q}c_t' - \frac{d}{d \tau} \left(\frac{\partial L}{\partial \dot{q}} \right) \right]dc_q + \left[\frac{d}{d \tau}\left( \frac{\partial L}{\partial \dot{q}} \frac{c_q'}{c_t'} - L  \right) \right]dc_t,	\label{eqn:EL}\\
			\Theta_L(c')
				&= \left[ \frac{\partial L}{\partial \dot{q}} \right]dc_q - \left[ \frac{\partial L}{\partial \dot{q}}\frac{c_q'}{c_t'} - L \right]dc_t,	\label{eqn:ThetaL}\\
			\hat{\delta}c(\tau)	
				&= \left( \left( c(\tau),\frac{\partial c}{\partial \tau}(\tau) \right), \left(\delta c (\tau), \frac{\partial \delta c}{\partial \tau}(\tau) \right) \right).
		\end{align}
	\end{subequations}
	\end{theorem}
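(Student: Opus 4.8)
The plan is to compute the first variation $d\tilde{\mathfrak{S}}\cdot\delta c$ directly from the definition \eqref{eqn:extActionMap} of the extended action and then reorganize it by integration by parts, so that the bulk terms assemble into an interior (Euler--Lagrange) contribution and the leftover boundary data assemble into the one-form $\Theta_L$. Since $\tilde{L}$ depends only on $c_q$, $c_q'$, and $c_t'$ and not on $c_t$ itself, differentiating \eqref{eqn:extLMap} under the integral sign yields
\[
d\tilde{\mathfrak{S}}\cdot\delta c = \int_0^1\left[\frac{\partial\tilde{L}}{\partial c_q}\,\delta c_q + \frac{\partial\tilde{L}}{\partial c_q'}\,\delta c_q' + \frac{\partial\tilde{L}}{\partial c_t'}\,\delta c_t'\right]d\tau,
\]
for $\delta c = (\delta c_t,\delta c_q)\in T_c\MM$.

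The key computation is the chain rule applied to $\tilde{L}(c) = L(c_q, c_q'/c_t')\,c_t'$. Writing $\dot q = c_q'/c_t'$ for the velocity of the associated curve, I would verify
\[
\frac{\partial\tilde{L}}{\partial c_q} = \frac{\partial L}{\partial q}\,c_t', \qquad \frac{\partial\tilde{L}}{\partial c_q'} = \frac{\partial L}{\partial\dot q}, \qquad \frac{\partial\tilde{L}}{\partial c_t'} = L - \frac{\partial L}{\partial\dot q}\frac{c_q'}{c_t'},
\]
where the factor $c_t'$ in the first identity, its cancellation in the second, and the Legendre-type combination in the third all arise from the Jacobian factor in \eqref{eqn:extLMap}. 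Substituting these and then splitting the integral at the singularity $\tau_i$ — which is forced because $c_q$ is only piecewise $C^2$ — I would integrate by parts the $\delta c_q'$ and $\delta c_t'$ terms separately on $[0,\tau_i]$ and $[\tau_i,1]$. On each subinterval this produces an interior integrand together with boundary evaluations at the one-sided endpoints $0$, $\tau_i^-$, $\tau_i^+$, and $1$.

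Collecting the interior integrands gives exactly the coefficient of $dc_q$, namely $\frac{\partial L}{\partial q}c_t' - \frac{d}{d\tau}\!\left(\frac{\partial L}{\partial\dot q}\right)$, and the coefficient of $dc_t$, namely $\frac{d}{d\tau}\!\left(\frac{\partial L}{\partial\dot q}\frac{c_q'}{c_t'} - L\right)$, so the two bulk integrals are $\int EL(c'')\cdot\delta c\,d\tau$ as in \eqref{eqn:EL}; the surviving boundary terms are precisely $\Theta_L(c')\cdot\hat{\delta}c$ with $\Theta_L$ as in \eqref{eqn:ThetaL}, paired against the lift $\hat{\delta}c$. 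The regularity claims follow by bookkeeping: $\Theta_L$ involves only first derivatives of the $C^k$ map $L$, hence is $C^{k-1}$, while $EL$ carries one further $\tfrac{d}{d\tau}$, hence is $C^{k-2}$. Uniqueness I would obtain from the fundamental lemma of the calculus of variations: choosing $\delta c$ supported in the interior of either subinterval and vanishing to first order at all four endpoints forces any admissible $EL$ to agree with the expression above, after which the identity collapses to a statement about boundary terms alone, pinning down $\Theta_L$.

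The main obstacle is not the coordinate computation, which is routine, but establishing that $EL$ and $\Theta_L$ are genuinely intrinsic objects — a well-defined section of $T^{*}Q_e$ over $\ddot{Q}_e$ and a one-form on $TQ_e$, respectively — rather than merely coordinate expressions. This requires checking that the split between \emph{interior} and \emph{boundary} contributions is itself coordinate-independent: under a change of chart both the total-derivative interior term and the integration-by-parts boundary term can shift, and one must confirm that $\Theta_L$ as defined absorbs exactly the ambiguity, so that the decomposition \eqref{eqn:actionVariation} is invariant. The remaining delicate point is the careful treatment of the singular instant $\tau_i$: one must ensure that the one-sided limits $\tau_i^{\pm}$ furnish the correct geometric pairings, and that restricting to variations $\delta c \in T_c\MM$ compatible with $c_q(\tau_i)\in\partial C$ still leaves enough freedom — via the unconstrained endpoints and smoothness of $\Theta_L$ — to determine $\Theta_L$ uniquely across the impact.
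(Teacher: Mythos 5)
Your proposal is correct and follows essentially the same route as the standard proof the paper defers to (the paper itself omits the proof, noting only that the result follows by taking variations and citing \citet{fetecau2003nonsmooth}): differentiate the extended action under the integral, apply the chain rule to $\tilde{L}(c,c') = L(c_q, c_q'/c_t')\,c_t'$ to get precisely the three partials you list, split the integral at $\tau_i$, integrate by parts on each smooth piece to separate the $EL$ bulk terms from the $\Theta_L$ boundary terms, and invoke the fundamental lemma of the calculus of variations for uniqueness. Your closing remarks on coordinate-independence of the bulk/boundary split and on the one-sided limits at $\tau_i$ correctly identify the only genuinely geometric content beyond the routine computation, and they are resolved exactly as you indicate.
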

		
	Hamilton's principle states that the possible trajectories of the system are the critical points of the action map. Therefore, all of the solution paths $c \in \MM$ must satisfy $d\tilde{\mathfrak{S}} \cdot \delta c = 0$ for all $\delta c \in T_c \MM$, which vanish at the boundary points $\tau = 0$ and $\tau = 1$. Then, if $c$ is a solution, it satisfies
	\begin{equation}
	\label{eqn:actionVariationwBoundaries}
		d\tilde{\mathfrak{S}} \cdot \delta c
		= \int_0^{\tau_{i}} EL(c'') \cdot \delta c \, d \tau + \int_{\tau_{i}}^1 EL(c'') \cdot \delta c \, d \tau + \Bigl.\Theta_L(c') \cdot \hat{\delta} c \Bigr \lvert_{\tau_{i}^-} ^{\tau_{i}^+} = 0.
	\end{equation}
	The above equation holds if and only if the Euler--Lagrange derivative is zero on the smooth portions $[0,\tau_i) \cup (\tau_i,1]$, and the Lagrangian one-form has a zero jump at $\tau_i$. The first gives us the \textit{extended Euler--Lagrange equations} expressed in the time domain as 
	
	\begin{subequations}
	\label{eqn:extELEquations}
	\noindent
		\begin{minipage}{0.5\textwidth}
			\begin{equation}
				\left(\frac{\partial L}{\partial q} - \frac{d}{dt} \left(\frac{\partial L}{\partial \dot{q}}\right)\right) \cdot \delta q = 0,
			\label{eqn:extEL1}
			\end{equation}
		\end{minipage}%
		\begin{minipage}{0.5\textwidth}
			\begin{equation}
			\frac{d}{dt}\left( \frac{\partial L}{\partial \dot{q}} \dot{q} - L \right) = 0,
			\label{eqn:extEL2}
			\end{equation}
		\end{minipage}
		\vskip1em
	\end{subequations}
	\noindent for all $ \delta q \in T_{q(t)}C$ and $t \in [t_0,t_i) \cup (t_i,t_1]$ where $t_0 = c_t(0), t_1 = c_t(1)$, and $t_i = c_t(\tau_i)$. There is some redundancy in this set of equations because \eqref{eqn:extEL2} is actually a consequence of \eqref{eqn:extEL1}: Note that $\frac{d}{dt} \left(\frac{\partial L}{\partial \dot{q}}\right) = \frac{\partial L}{\partial q}$ for all $\delta q \in T_{q(t)}C$, so 
	\begin{align*}
		\frac{d}{dt}\left( \frac{\partial L}{\partial \dot{q}} \dot{q} - L \right) 
		= \frac{d}{dt}\left( \frac{\partial L}{\partial \dot{q}} \right) \dot{q} + \frac{\partial L}{\partial \dot{q}}\ddot{q} - \frac{dL}{dt}
		= \left[ \frac{\partial L}{\partial q}\dot{q} + \frac{\partial L}{\partial \dot{q}}\ddot{q} \right] - \frac{dL}{dt}	
		= 0,
	\end{align*}
	since the time derivative of the Lagrangian is the expression in the square brackets. In addition, \eqref{eqn:extEL2} implies energy conservation for the autonomous system where $E = \frac{\partial L}{\partial \dot{q}} \dot{q} - L$, which is unsurprising, since the standard Euler--Lagrange equation already preserves energy. Hence, it suffices to consider the standard Euler--Lagrange equation for the equations of motion.
		
	For the Lagrangian one-form, one write it compactly as $\Theta_L = \frac{\partial L}{\partial \dot{q}} \, dq - E \, dt$ in the time domain. Then, having a zero jump at $\tau_i$ implies 
	
	\begin{subequations}
	\label{eqn:impactConserved}
	\noindent
		\begin{minipage}{0.5\textwidth}
			\begin{equation}
				\left(\biggl.\frac{\partial L}{\partial \dot{q}} \biggr\lvert_{t = t_i^+} - \biggl.\frac{\partial L}{\partial \dot{q}}\biggr\lvert_{t = t_i^-} \right)\cdot \delta q = 0,
			\label{eqn:impactLinearMomentumConserved}	
			\end{equation}
		\end{minipage}%
		\begin{minipage}{0.5\textwidth}
			\begin{equation}
			\Bigl.E(q,\dot{q}) \Bigr\lvert_{t = t_i^+} - \Bigl.E(q,\dot{q})\Bigr \lvert_{t = t_i^-} = 0,
			\label{eqn:impactEnergyConserved}
			\end{equation}
		\end{minipage}
		\vskip1em
	\end{subequations}
	\noindent for all $\delta q \in T_{q(t_i)} \partial C$. This set of equations is known as the \textit{Weierstrass--Erdmann} type conditions for impact. While \eqref{eqn:impactLinearMomentumConserved} indicates that the momentum is conserved in the tangential direction of $\partial C$, \eqref{eqn:impactEnergyConserved} indicates the conservation of energy during an elastic impact. 
	
	Together, \eqref{eqn:impactConserved} gives the solution to $q(t_i^+)$, and it is clear that an obvious solution would be $q(t_i^+) = q(t_i^-)$. However, this is not allowed as the trajectory will no longer stay within the admissible set $C$. For a contact set $\partial C$ that is a codimension-one smooth submanifold, the solution to \eqref{eqn:impactConserved} exists and is locally unique \citep{fetecau2003nonsmooth}.
	
		\subsubsection{Legendre Transform}
		\label{BackgContLegendre}
		Although the proof of Theorem \ref{thm:ContinuousThm} is omitted, its results, namely the Euler--Lagrange derivative, were obtained by taking the variation with respect to the tangent bundle $TQ$. Hence, it is also natural to derive Hamilton's equations by taking variations in terms of momenta $\F L(q,\dot{q}) \in T^*Q$ where,
		\begin{equation}
		\label{eqn:LegendreTrans}
			\F L(q,\dot{q}) \cdot \delta \dot{q} = \biggl.\frac{d}{d\epsilon}\biggr\lvert_{\epsilon = 0} L(q,\dot{q}+\epsilon \delta\dot{q}).
		\end{equation}
		The map $\F L$ is the \textit{Legendre Transform} or \textit{fibre derivative}, and $T^*Q$ denotes the cotangent bundle. From this transformation, it can be shown that Hamilton's equations are equivalent to the Euler--Lagrange equations and they provide an alternative description of our system \citep{MaRa1999}. 
	
	\subsection{Discrete-Time Model}
	\label{sect:BackgDiscrete}
	
	Ideally, we would like to introduce the discrete-time model analogously to the continuous case using the nonautonomous approach. However, there is no guarantee that there exists a varying timestep solution to the extended discrete Euler--Lagrange equations as discussed in \cite{kane1999symplectic} and \cite{lew2003variational}. Therefore, we shall only introduce the discrete-time equations using the autonomous approach with a fixed timestep $h \in \R$ instead. 
	
	In the discrete setting, consider a discrete Lagrangian $L_d : Q \times Q \to \R$ that approximates a segment of the autonomous action integral 
	\[
		L_d(q_k,q_{k+1},h) \approx \int_{t_k}^{t_{k+1}} L(q(t),\dot{q}(t)) \, dt,
	\]
	where $q_k = q(t_k)$, $q_{k+1} = q(t_{k+1})$, and $h = t_{k+1} - t_k$. In general, the action integral is approximated using discrete fixed timesteps: $t_k = kh$ for $k = 0, 1, \ldots, N$. Let $\tilde{\alpha} \in [0,1]$ such that $\tilde{\tau} = t_{i-1} + \tilde{\alpha}h$ is the fixed impact time corresponding to the parameterized variable. Denote the actual impact time by $\tilde{t} = t_{i-1} + \alpha h$, where $\alpha = t_d(\tilde{\alpha})$ with $t_d$ being a strictly increasing function from the closed unit interval onto the closed unit interval. Now, define the \textit{discrete path space} to be $\MM_d = \TT_d \times \QQ_d(\tilde{\alpha}, \partial C, Q)$ where
	\begin{align*}
		\TT_d 
			&= \{ t_d(\tilde{\alpha}) = \alpha \mid t_d \in C^\infty([0,1],[0,1]), t_d \text{ onto}, t_d' > 0 \},	\\
		\QQ_d(\tilde{\alpha}, \partial C, Q)
			&= \{ q_d : \, \{ t_0, \ldots, t_{i-1}, \tilde{\tau}, t_i, \ldots, t_N\} \to Q \mid q_d(\tilde{\tau}) \in \partial C \}.
	\end{align*}
	Moreover, we remark that $\TT_d$ is actually the closed unit interval $[0,1]$ given all the possible strictly increasing, surjective functions $t_d$.
	
	For a more convenient notation, identify the discrete trajectory with its image 
	\[
		(\alpha,q_d) \leftrightarrow (\alpha, \{ q_0, \ldots, q_{i-1},\tilde{q}, q_i, \ldots, q_N \}),
	\]
	where $q_k = q_d(t_k)$, $\tilde{q} = q_d(\tilde{\tau})$, and $\alpha = t_d(\tilde{\alpha})$. In fact, $\MM_d$ is isomorphic to $[0,1] \times Q \times \cdots \times \partial C \times \cdots \times Q$, so it can be viewed as a smooth manifold. Then, for $q_d \in \QQ_d(\tilde{\alpha}, \partial C, Q)$, we have the tangent space
	\[
		T_{q_d}\QQ_d(\tilde{\alpha}, \partial C, Q) = \{ v_{q_d} : \{ t_0, \ldots, t_{i-1}, \tilde{\tau}, t_i, \ldots, t_N\} \to Q \mid v_{q_d}(\tilde{\tau}) \in T_{\tilde{q}}\partial C\}.
	\]
	Now, for $(\alpha,q_d) \in \MM$, also identify $(\delta \alpha,v_{q_d}) \in T_{(t_d,q_d)}\MM_d$ with its image
	\[
		(\delta \alpha,v_{q_d}) \leftrightarrow (\delta \alpha, \delta q_d)= (\delta \alpha, \{ \delta q_0, \ldots, \delta q_{i-1},\delta \tilde{q}, \delta q_i, \ldots, \delta q_N \}),
	\]
	where $\delta q_k = v_{q_d}(t_k)$, $\delta \tilde{q} = v_{q_d}(\tilde{\tau})$, and $\delta\alpha = v_{t_d}(\tilde{\alpha})$.
	
	The \textit{discrete action map} $\mathfrak{S}_d : \MM_d \to \R$ is defined by
	\begin{equation}
	\label{eqn:discreteactionmap}
		\mathfrak{S}_d(\alpha,q_d)
		= \sum_{\substack{k=0 \\ k \neq {i-1}}}^{N-1} L_d(q_k,q_{k+1},h)
		+ L_d(q_{i-1},\tilde{q},\alpha h) + L_d(\tilde{q},q_i, (1-\alpha)h),
	\end{equation}
	on which we will take the variations with respect to the discrete path and parameter $\alpha$. Lastly, we define the \textit{discrete second-order manifold} to be
	$
		\ddot{Q}_d = Q \times Q \times Q,
	$
	which is locally isomorphic to $\ddot{Q}_e$.
	
	\begin{theorem}
	\label{thm:discreteThm}
		Given a $C^k$ discrete Lagrangian $L_d : Q \times Q \times \R \to \R$, $k \geq 1$, there exist a unique $C^{k-1}$ mapping $EL_d : \ddot{Q}_d \to T^{*}Q$ and one-forms $\Theta_{L_d}^{-}$ and $\Theta_{L_d}^{+}$ on the discrete Lagrangian phase space $Q \times Q$ such that for all variations $(\delta \alpha, \delta q_d) \in T_{(t_d,q_d)} \MM_d$ of $(t_d,q_d)$, the variation of the discrete action is given by
		\begin{align}
		\label{eqn:deltaDiscreteAction}
			\begin{split}
			d\mathfrak{S}_d(\alpha, q_d) \cdot (\delta \alpha, \delta q_d)
				&= \sum_{k=1}^{i-2} EL_d(q_{k-1},q_k,q_{k+1}) \cdot \delta q_k + \sum_{k=i+1}^{N-1} EL_d(q_{k-1},q_k,q_{k+1}) \cdot \delta q_k	\\
					& + \Theta_{L_d}^{+}(q_{N-1},q_N) \cdot (\delta q_{N-1}, \delta q_N) - \Theta_{L_d}^{-}(q_0,q_1) \cdot (\delta q_0, \delta q_1)	\\
					& + [D_2 L_d(q_{i-2},q_{i-1},h) + D_1 L_d(q_{i-1},\tilde{q},\alpha h)] \cdot \delta q_{i-1}	\\
					& + h[D_3 L_d(q_{i-1},\tilde{q},\alpha h) - D_3L_d(\tilde{q},q_i, (1-\alpha)h)] \cdot \delta \alpha	\\
		 			& + i^*(D_2 L_d(q_{i-1},\tilde{q},\alpha h) + D_1 L_d(\tilde{q},q_i, (1-\alpha)h)) \cdot \delta \tilde{q}	\\
		 			& + [D_2 L_d(\tilde{q},q_i, (1-\alpha)h) + D_1 L_d(q_i,q_{i+1},h)] \cdot \delta q_i,
			\end{split}
		\end{align}
	where $i^{*} : T^{*}Q \to T^{*}\partial C$ is the \textbf{cotangent lift} of the embedding $i : \partial C \to Q$.
	
	The map $EL_d$ is called the \textbf{discrete Euler--Lagrange derivative} and the one-forms $\Theta_{L_d}^{-}$ and $\Theta_{L_d}^{+}$ are the \textbf{discrete Lagrangian one-forms}. These are the coordinate expressions,
	\begin{equation}
	\label{eqn:discreteELequation}
		EL_d(q_{k-1},q_k,q_{k+1}) = [D_2 L_d(q_{k-1},q_k,h) + D_1 L_d(q_k,q_{k+1},h)] \, dq_k,
	\end{equation}
	for $k \in \{ 1, \ldots, i-2, i, \ldots, N-1 \}$ and 
	\begin{subequations}
	\label{eqn:discreteLagrangian}
	\begin{align}
		\Theta_{L_d}^{+}(q_k,q_{k+1})
			&= D_2 L_d (q_k,q_{k+1},h) \, dq_{k+1},	\label{eqn:discreteLagrangianp}\\
		\Theta_{L_d}^{-}(q_k,q_{k+1})
			&= -D_1 L_d (q_k,q_{k+1},h) \, dq_{k}.	\label{eqn:discreteLagrangianm}
	\end{align}
	\end{subequations}
	\end{theorem}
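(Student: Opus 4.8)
The plan is to obtain \eqref{eqn:deltaDiscreteAction} by differentiating the explicit finite sum \eqref{eqn:discreteactionmap} term by term and then collecting the coefficient of each independent variation $\delta q_k$, $\delta\tilde q$, and $\delta\alpha$; existence of $EL_d$ and $\Theta_{L_d}^{\pm}$ is then just a matter of reading off these coefficients, while uniqueness and regularity follow from a localization argument. Since $\mathfrak{S}_d$ is a finite sum of evaluations of the $C^k$ map $L_d$, the assignment $(\alpha,q_d)\mapsto d\mathfrak{S}_d$ is $C^{k-1}$, which yields the asserted regularity.

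First I would treat the nodes away from the impact. A generic interior configuration $q_k$ with $1\le k\le i-2$ or $i+1\le k\le N-1$ enters exactly two consecutive discrete Lagrangians, namely $L_d(q_{k-1},q_k,h)$ through its second slot and $L_d(q_k,q_{k+1},h)$ through its first slot. Collecting these contributions across the regular part of the sum is precisely the discrete summation-by-parts / reindexing identity
\[
\sum_k \big[ D_1 L_d(q_k,q_{k+1},h)\,\delta q_k + D_2 L_d(q_k,q_{k+1},h)\,\delta q_{k+1}\big]
= \sum_k EL_d(q_{k-1},q_k,q_{k+1})\cdot\delta q_k + (\text{endpoint leftovers}),
\]
where $EL_d$ is exactly \eqref{eqn:discreteELequation}. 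The leftover first-slot term at $k=0$ and second-slot term at $k=N$ are not absorbed into any $EL_d$ and become $-\Theta_{L_d}^{-}(q_0,q_1)$ and $\Theta_{L_d}^{+}(q_{N-1},q_N)$ by the definitions \eqref{eqn:discreteLagrangianp}--\eqref{eqn:discreteLagrangianm}. This reproduces the two interior sums and the two boundary one-form terms in \eqref{eqn:deltaDiscreteAction}.

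Next I would handle the impact cluster, where the reindexing is interrupted because the single term $L_d(q_{i-1},q_i,h)$ is replaced by the pair $L_d(q_{i-1},\tilde q,\alpha h)$ and $L_d(\tilde q,q_i,(1-\alpha)h)$. Here $q_{i-1}$ sees the regular second-slot term $D_2 L_d(q_{i-2},q_{i-1},h)$ together with the impact first-slot term $D_1 L_d(q_{i-1},\tilde q,\alpha h)$; symmetrically $q_i$ sees $D_2 L_d(\tilde q,q_i,(1-\alpha)h)$ and $D_1 L_d(q_i,q_{i+1},h)$; and $\tilde q$ sees the two impact terms $D_2 L_d(q_{i-1},\tilde q,\alpha h)+D_1 L_d(\tilde q,q_i,(1-\alpha)h)$. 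The parameter $\alpha$ enters only through the time slots $\alpha h$ and $(1-\alpha)h$, so the chain rule produces the factor $h$ with opposite signs, giving the $D_3$ term multiplying $\delta\alpha$. These account for the remaining four lines of \eqref{eqn:deltaDiscreteAction}.

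The one genuinely non-mechanical point — and the step I expect to require the most care — is the appearance of the cotangent lift $i^{*}$ in front of the $\delta\tilde q$ term. By the definition of $\QQ_d(\tilde\alpha,\partial C,Q)$, the impact node is constrained to $\tilde q\in\partial C$, so an admissible variation satisfies $\delta\tilde q\in T_{\tilde q}\partial C$ rather than all of $T_{\tilde q}Q$. The covector $D_2 L_d(q_{i-1},\tilde q,\alpha h)+D_1 L_d(\tilde q,q_i,(1-\alpha)h)$ lives in $T^{*}_{\tilde q}Q$, and pairing it only against tangent vectors to $\partial C$ is exactly restriction along the inclusion $i:\partial C\hookrightarrow Q$, i.e. applying its cotangent lift $i^{*}$; I would make this identification explicit so that the pairing is well defined on $T_{\tilde q}\partial C$. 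Finally, for uniqueness I would test \eqref{eqn:deltaDiscreteAction} against localized variations: choosing $\delta\alpha=0$ with $\delta q_d$ supported at a single interior node isolates $EL_d(q_{k-1},q_k,q_{k+1})$ as a unique element of $T^{*}_{q_k}Q$, while variations supported only near $q_0$ or $q_N$ pin down $\Theta_{L_d}^{-}$ and $\Theta_{L_d}^{+}$; since each is assembled from first derivatives of the $C^k$ map $L_d$, they are $C^{k-1}$, which completes the argument.
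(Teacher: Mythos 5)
Your proposal is correct and is precisely the canonical argument: the paper states this theorem without proof (it is background material imported from Fetecau et al.), and the intended derivation there is exactly your term-by-term differentiation of the finite sum \eqref{eqn:discreteactionmap}, the discrete summation-by-parts reindexing that leaves the endpoint terms as $\Theta_{L_d}^{\pm}$, the chain rule through the time slots $\alpha h$ and $(1-\alpha)h$ producing the $h\,D_3$ terms with opposite signs, and the observation that $\delta\tilde q\in T_{\tilde q}\partial C$ forces the pairing through the cotangent lift $i^{*}$. Your localization argument for uniqueness and the $C^{k-1}$ regularity from first derivatives of the $C^{k}$ map $L_d$ are likewise the standard closing steps, so nothing is missing.
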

	
	Note that $D_i$ denotes the partial derivative with respect to the $i$-th argument of the discrete Lagrangian. Now, by the discrete Hamilton's principle, the possible discrete, stationary paths $(t_d,q_d)$ are critical points of the discrete action map. Therefore, the solution,
	\[
		(\alpha, \{ q_0, \ldots, q_{i-1},\tilde{q}, q_i, \ldots, q_N \}),
	\]
	satisfies $d\mathfrak{S}_d(\alpha,q_d) \cdot (\delta \alpha, \delta q_d) = 0$ for all variations $(\delta \alpha, \delta q_d) \in T_{(\alpha,q_d)} \MM_d$ that vanish at index $0$ and $N$. 
	
	Using equation \eqref{eqn:deltaDiscreteAction}, we conclude that the discrete Euler--Lagrange derivative vanishes,
	\begin{equation}
	\label{eqn:discreteEL}
		[D_2 L_d(q_{k-1},q_k,h) + D_1 L_d(q_k,q_{k+1},h)] \cdot \delta q_k = 0, 
	\end{equation}
	for all $\delta q_k \in T_{q_k}{C}$ and all $k \in \{1, 2, \ldots, i-2, i+1, \ldots, N - 1 \}$. This is know as the \textit{discrete Euler--Lagrange equations}, which describes the system away from the impact point.
	
	Then, prior to the impact, we have this system of equations,
	\begin{subequations}
	\label{eqn:discreteImpact1}
	\begin{align}
		[D_2 L_d(q_{i-2},q_{i-1},h) + D_1 L_d(q_{i-1},\tilde{q},\alpha h)] \cdot \delta q_{i-1}
			&= 0,	\label{eqn:discreteImpact1pt1}\\
		\tilde{q}
			&\in \partial C,	\label{eqn:discreteImpact1pt2}
	\end{align}	
	\end{subequations}
	which can be used to solve for $\alpha$ and $\tilde{q}$, the impact point. Next, we have the discrete impact condition,
	\begin{subequations}
	\label{eqn:discreteImpact2}
	\begin{align}
		D_3 L_d(q_{i-1},\tilde{q},\alpha h) - D_3L_d(\tilde{q},q_i, (1-\alpha)h)
			&= 0,	\label{eqn:discreteImpact2pt1}\\
		i^*(D_2 L_d(q_{i-1},\tilde{q},\alpha h) + D_1 L_d(\tilde{q},q_i, (1-\alpha)h)) \cdot \delta \tilde{q}
			&= 0,	\label{eqn:discreteImpact2pt2}
	\end{align}
	\end{subequations}
	where we solve for $q_i$. To provide an interpretation for the discrete equations above, define the \textit{discrete energy} $E_d : Q \times Q \to \R$ by 
	\begin{equation}
	\label{eqn:discreteEnergy}
		E_d(q_k,q_{k+1},h) = -D_3L_d(q_k,q_{k+1},h).
	\end{equation}	 
	This definition is motivated by the fact that it yields the exact Hamiltonian if we apply it to the \textit{exact discrete Lagrangian} $L_d^E$. The exact discrete Lagrangian $L_d^E$ is equal to action integral of the exact solution of the Euler--Lagrange equations that satisfy the prescribed boundary conditions, and is related to Jacobi's solution of the Hamilton--Jacobi equation. Hence, equation \eqref{eqn:discreteImpact2pt1} represents the conservation of discrete energy while equation \eqref{eqn:discreteImpact2pt2} expresses the conservation of the discrete momentum projected onto $T^* \partial C$. Lastly, we obtain $q_{i+1}$ by solving the last system of equations,
	\begin{equation}
	\label{eqn:discreteImpact3}
		[D_2 L_d(\tilde{q},q_i, (1-\alpha)h) + D_1 L_d(q_i,q_{i+1},h)] \cdot \delta q_i = 0.
	\end{equation}
	Equations \eqref{eqn:discreteEL}-\eqref{eqn:discreteImpact2} and \eqref{eqn:discreteImpact3} describe the complete trajectory of the discrete path including the impact point and time. We will rely on these equations to construct our algorithm to simulate the dynamics of the bouncing ellipsoid.
	\begin{figure}
	\centering
	\includegraphics{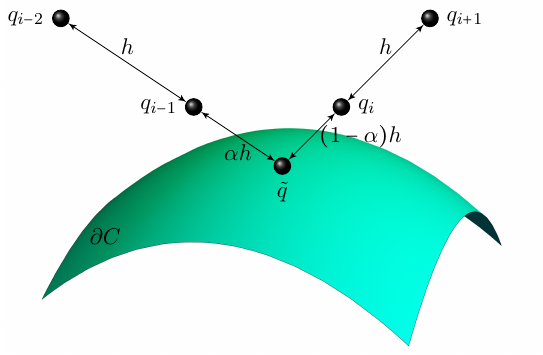}
	\caption{The discrete variational principle for collisions}
	\label{fig:varPrincipleCollisions}
	\end{figure}
		\subsubsection{Discrete Legendre Transforms}
		\label{sect:BackgDiscreteLengenre}
		
		In the discrete-time model, we may also write the variational collision integrator in Hamiltonian form via the discrete analogue of the Legendre transform, known as the \textit{discrete Legendre transforms} or \textit{discrete fiber derivatives} $\F^\pm L_d : Q \times Q \to T^*Q$. They are defined by
		\begin{subequations}
		\label{eqn:discreteLegendreTrans}
		\begin{align}
			\mathbb{F}^- L_d(q_k,q_{k+1})	\cdot \delta q_k &= -D_1L_d(q_k,q_{k+1}) \cdot \delta q_k,	\label{eqn:discreteLegendreTrans1}\\
			\mathbb{F}^+ L_d(q_k,q_{k+1})	\cdot \delta q_{k+1} &= D_2L_d(q_k,q_{k+1}) \cdot \delta q_{k+1}.	\label{eqn:discreteLegendreTrans2}
		\end{align}
		\end{subequations}
		This allows us to introduce momenta as images of the discrete Legendre transform,
		\begin{subequations}
		\label{eqn:discreteLTransp}
		\begin{align}
			p^+_{k,k+1} = p^+(q_k,q_{k+1},h) = \mathbb{F}^+ L_d(q_k,q_{k+1}),	\label{eqn:discreteLTransp1} \\
			p^-_{k,k+1} = p^-(q_k,q_{k+1},h) = \mathbb{F}^- L_d(q_k,q_{k+1}).	\label{eqn:discreteLTransp2} 
		\end{align}
		\end{subequations}
		The discrete Euler--Lagrange equations \eqref{eqn:discreteEL} may be rewritten as $\mathbb{F}^+L_d(q_{k-1},q_k) \cdot \delta q_k  = \mathbb{F}^- L_d(q_k,q_{k+1}) \cdot \delta q_k$. This translates to $p^+_{k-1,k} = p^-_{k,k+1}$, which shows that the discrete momentum expressed on the time interval $[t_{k-1},t_k]$ is the same as on the time interval $[t_k,t_{k+1}]$. This allows us to interpret \eqref{eqn:discreteImpact2pt2} as the conservation of discrete momentum projected onto $T^*\partial C$ during the impact. 
		 

\section{Dynamics of the Bouncing Ellipsoid}
\label{sect:Dynamics}
	In this section, we derive the continuous equations of motion and the jump conditions for the bouncing ellipsoid. However, we first construct the Lagrangian based on the approach described in \cite{lee2007lie}.
	
	Consider the configuration space $Q = SE(3)$. Denote the set of body elements of a rigid-body by $\mathcal{B}$, namely the ellipsoid, and let $(\bm{x},R) \in SE(3)$ describe its configuration. Then, the inertial position of a body element of $\mathcal{B}$ is $\bm{x} + R \bm{\rho}$, where $\bm{\rho} \in \R^3$ is the position of the body element relative to the origin of the body-fixed frame. Thus, the kinetic energy is written as
	$
		T 
		= \frac{1}{2} \int_\BB \| \dot{\bm{x}} + \dot{R} \bm{\rho} \|^2 \, dm
		= \frac{1}{2} \int_\BB \| \dot{\bm{x}} \|^2 \, dm + \int_\BB \dot{\bm{x}}^T \dot{R} \bm{\rho}  \, dm + \frac{1}{2} \int_\BB \| \dot{R} \bm{\rho} \|^2 \, dm.		
	$
	Recall that $\int_\BB \bm{\rho} \, dm = 0$ since the origin of the body-fixed frame is the center of mass of the rigid-body. Expanding $ \| \dot{R} \bm{\rho} \|^2 = \tr[\dot{R} \bm{\rho} \bm{\rho}^T \dot{R}^T]$, we obtain $T =  \frac{1}{2} m \| \dot{\bm{x}} \|^2 + \frac{1}{2} \tr[\dot{R}J_d\dot{R}^T]$ where $m$ is the total mass and $J_d = \int_\mathcal{B} \bm{\rho} \bm{\rho}^T \, dm$ is a nonstandard moment of inertia matrix. By property \eqref{eqn:skew2} of the skew map, $J_d$ is related to the standard moment of inertia matrix $J = \int_\mathcal{B} S(\bm{\rho})^T S(\bm{\rho}) \, dm$ in the following ways: $J = \tr [J_d] I_3 - J_d$, $J_d = \frac{1}{2} \tr [J] I_3 - J$, and
	\begin{equation}
	\label{eqn:SJOmega}
		S(J\bm{\Omega}) = S(\bm{\Omega})J_d + J_dS(\bm{\Omega}),
	\end{equation}
	for any $\bm{\Omega} \in \R^3$ (see Appendix A.2 in \cite{lee2008thesis}). Since the ellipsoid is under the influence of a uniform, constant gravitational field in the $z$-direction, the potential energy is written as $U = mg\bm{e_3}^T\bm{x}$ where $g$ is the gravitational acceleration. Hence, the Lagrangian $L : TSE(3) \to \R$ is given by
	\begin{equation}
	\label{eqn:ellipsoidLagrangian}
		L(\bm{x},R,\dot{\bm{x}},\dot{R})
		= \frac{1}{2} m \| \dot{\bm{x}} \|^2 + \frac{1}{2} \tr[\dot{R}J_d\dot{R}^T] - mg\bm{e_3}^T\bm{x}. 
	\end{equation}
	This form of the Lagrangian is useful for computing the equations of motion and the jump conditions directly from Theorem \ref{thm:ContinuousThm}. However, a modified Lagrangian expressed in terms of identifications with the Lie algebra, which is a linear space, is useful for computing variations; this computation gives the equations of motion in \cite{lee2007lie}. The modified Lagrangian is introduced below for later use in Section~\ref{sect:LGVCI}.
	
	Recall that when $R \in SO(3)$, $\dot{R} = R S(\bm{\Omega})$ for some $\bm{\Omega} \in \R^3$ by left-trivialization. Since $SE(3) = \R^3 \rtimes SO(3)$ is defined by a semidirect product, we are careful with the identification on the tangent bundle $TSE(3) = SE(3) \times \mathfrak{se}(3)$ where $SE(3)$ is associated with $\R^3 \times SO(3)$ and $\mathfrak{se}(3)$ with $\R^3 \times \mathfrak{so}(3)$. Furthermore, by the skew map, $\mathfrak{so}(3) \simeq \R^3$. With these identifications, we can introduce a \textit{modified Lagrangian} $\tilde{L} : \R^3 \times SO(3) \times \R^3 \times \R^3 \to \R$. Denote the position, attitude, linear velocity, and angular velocity of the ellipsoid by $(\bm{x},R,\dot{\bm{x}},\bm{\Omega})$, respectively. Then, the modified Lagrangian is defined by
	\begin{subequations}
	\label{eqn:ellipsoidmLagrangian}
	\begin{align}
		\tilde{L} (\bm{x},R,\dot{\bm{x}},\bm{\Omega}) 
		&= \frac{1}{2}m \|\dot{\bm{x}}\|^2 + \frac{1}{2}\tr[S(\bm{\Omega}) J_d S(\bm{\Omega})^T] - mg \bm{e_3}^T \bm{x}	\label{eqn:ellipsoidmLagrangian1}\\
		&= \frac{1}{2}m \|\dot{\bm{x}}\|^2 + \frac{1}{2}\bm{\Omega}^T J \bm{\Omega} - mg \bm{e_3}^T \bm{x}, \label{eqn:ellipsoidmLagrangian2}
	\end{align}
	\end{subequations}
	see Section 2.3.2 in \cite{lee2008thesis}.
	
	Here, the admissible set is $C = \left\{ (\bm{x},R) \in SE(3) \mid \Phi(\bm{x},R) > 0 \right\} \subset Q$ where $\Phi$ is the collision detection function discussed in Section \ref{sect:ProblemDistanceForm}. Furthermore, this means that the boundary $\partial C = \left\{ (\bm{x},R) \in SE(3) \mid \Phi(\bm{x},R) = 0 \right\} = {\Phi}^{-1}(0)$ is the zero level set of the function $\Phi$, where $0$ is a regular value.
	\subsection{Equations of Motion: Euler--Lagrange Equations}
	\label{sect:eqnOfMotions}
	We compute the equations of motion directly from the Euler--Lagrange equations in \eqref{eqn:extEL1}, which is $\left[ \frac{\partial L}{\partial q} - \frac{d}{dt} \left(\frac{\partial L}{\partial \dot{q}}\right) \right] \cdot \delta q = 0$ on the time interval $[t_0,t_f]$. We write $\delta q = (\delta \bm{x}, \delta R)$, where each term can be calculated individually. The variation on translation is simple: $\delta \bm{x}: [t_0,t_f] \to \R^3$, which vanishes at the initial time and final time. For the other, vary the rotation matrix in $SO(3)$ by expressing it as $R^{\epsilon} = Re^{\epsilon \eta}$ where $\epsilon \in \R$ and $\eta \in \mathfrak{so}(3)$. Then, the variation is given by $\delta R = \biggl.\frac{d}{d \epsilon} \Big\lvert_{\epsilon=0} R^\epsilon = R \eta$. 
		
	Now, using the Lagrangian in \eqref{eqn:ellipsoidLagrangian}, the partial derivatives are
	
	\begin{subequations}
	\label{eqn:Lpartials}
	\noindent
		\begin{minipage}{0.5\textwidth}
			\begin{equation}
				\frac{\partial L}{\partial q} 
				= \left( \frac{\partial L}{\partial \bm{x}}, \frac{\partial L}{\partial R} \right)
				= (-mg\bm{e_3},0),
			\label{eqn:Lpartials1} 
			\end{equation}
		\end{minipage}%
		\begin{minipage}{0.5\textwidth}
			\begin{equation}
				\frac{\partial L}{\partial \dot{q}} 
				= \left( \frac{\partial L}{\partial \dot{\bm{x}}}, \frac{\partial L}{\partial \dot{R}} \right)
				= (m\dot{\bm{x}},\dot{R}J_d),
			\label{eqn:Lpartials2} 
			\end{equation}
		\end{minipage}
		\vskip1em
	\end{subequations}
	\noindent where matrix derivatives are involved. Since $\delta q = (\delta \bm{x}, R\eta)$, the Euler--Lagrange equations become $\left[ (-mg\bm{e_3},0) - \frac{d}{dt}(m\dot{\bm{x}},\dot{R}J_d) \right]	\cdot (\delta \bm{x}, R\eta) = 0$, and so we have $-\delta \bm{x}^T \left\{ m \ddot{\bm{x}} + m g \bm{e_3} \right\} - \tr[\eta^T\left\{ R^T \ddot{R}J_d \right\}] = 0$ for all variations $\delta \bm{x}$ and $\eta$. For this to be true, the expressions in the curly brackets must vanish. Hence, the first equation of motion is $\ddot{\bm{x}} = -g \bm{e}_3$. The second expression vanishes if it is a symmetric matrix by Proposition~\ref{prop:traceProp}, so $\asym(R^T \ddot{R}J_d) = 0$. Note that 
	$
		R^T \ddot{R}J_d
			= R^T(\dot{R}S(\bm{\Omega}) + RS(\dot{\bm{\Omega}}))J_d
			= S(\bm{\Omega})^2 J_d + S(\dot{\bm{\Omega}})J_d,
	$
	where $\dot{R} = RS(\Omega)$. Then, $\asym(R^T \ddot{R}J_d) = 0$ expands into 
	\begin{align*}
		0
		&= \asym(S(\bm{\Omega})^2 J_d + S(\dot{\bm{\Omega}})J_d)
		= \left[S(\bm{\Omega})^2 J_d - J_d S(\bm{\Omega})^2\right] + \left[ S(\dot{\bm{\Omega}})J_d + J_d S(\dot{\bm{\Omega}}) \right]	\\
		&= S(\bm{\Omega} \times J \bm{\Omega}) +  S(J \dot{\bm{\Omega}})
		=  S(\bm{\Omega} \times J \bm{\Omega} + J \dot{\bm{\Omega}}),
	\end{align*}
	where \eqref{eqn:skew3} and \eqref{eqn:SJOmega} are used. Since the skew map is an isomorphism, $0 = J \dot{\bm{\Omega}}  + \bm{\Omega} \times J \bm{\Omega} $, we obtain the full set of \textit{continuous equations of motion in Lagrangian form} for the bouncing ellipsoid:
	\begin{myframedeq}
	\begin{equation}
	\label{eqn:eomLagrangian}
		\dot{\bm{v}} = - g \bm{e_3}, \quad
		J \dot{\bm{\Omega}} = J \bm{\Omega} \times \bm{\Omega}, \quad
		\dot{\bm{x}} = \bm{v}, \quad
		\dot{R} = R S(\bm{\Omega}),
	\end{equation}
	\end{myframedeq}
	\noindent where $\bm{v} \in \R^3$ is the translational velocity defined as $\bm{v} = \dot{\bm{x}}$. In particular, this describes the motion of the ellipsoid in the admissible set $C$.
	
		\subsubsection{Hamilton's Equations}
		\label{sect:eqnOfMotionsM3}
		We consider the Legendre transformation for our modified Lagrangian $\F \tilde{L} : SE(3) \times \mathfrak{se}(3) \to SE(3) \times \mathfrak{se}^*(3)$, where $\mathfrak{se}^*(3)$ is identified with $\mathfrak{se}(3)$ by the Riesz representation. Using \eqref{eqn:LegendreTrans} and \eqref{eqn:ellipsoidmLagrangian1}, we have
		\[
			\F \tilde{L}(\bm{x},R,\dot{\bm{x}},S(\bm{\Omega})) \cdot_S (\delta \dot{\bm{x}},\eta)
			= (m\dot{\bm{x}},S(J \bm{\Omega})) \cdot_S (\delta \dot{\bm{x}},\eta),
		\]
		where the identity \eqref{eqn:SJOmega} was used. From the Legendre transform $\F \tilde{L} : (\bm{x},R,\dot{\bm{x}}, S(\bm{\Omega})) \mapsto (\bm{x},R,m\dot{\bm{x}},S(J \bm{\Omega}))$, the linear and angular momentum are written as $\bm{\gamma} = m \dot{\bm{x}}$ and $\bm{\Pi} = J\bm{\Omega}$, respectively. Hence, we arrive at the \textit{continuous equations of motion in Hamiltonian form}
		\begin{myframedeq}
		\begin{equation}
			\dot{\bm{\gamma}} = - mg \bm{e_3}, \quad
			\dot{\bm{\Pi}} = \bm{\Pi} \times \bm{\Omega}, \quad
			\dot{\bm{x}} = \dfrac{1}{m}\bm{\gamma}, \quad
			\dot{R} = R S(\bm{\Omega}).
		\end{equation}
		\end{myframedeq}
	\subsection{Jump Conditions}
	\label{sect:jumpConds}
	We derive the jump conditions for our system using \eqref{eqn:impactConserved}. For convenience, let $q(t_i) = (\bm{x}_i,R_i) \in \partial C$, and let $(\dot{\bm{x}}_{i}^{\pm},\dot{R}_{i}^{\pm}) = \lim_{t \to t_i^{\pm}}(\dot{\bm{x}},\dot{R})$. The first jump condition \eqref{eqn:impactLinearMomentumConserved} gives 
	$
		\left( m (\dot{\bm{x}}_{i}^{+} - \dot{\bm{x}}_{i}^{-}), (\dot{R}_{i}^{+} - \dot{R}_{i}^{-})J_d \right) \cdot \delta q = 0,
	$
	for all $\delta q \in T_{q(t_i)}\partial C$. Now, write $\dot{R}_{i}^{\pm} = R_i S(\bm{\Omega}_{i}^{\pm})$ since $\lim_{t \to t_{i}^{\pm}} (\bm{x},R) = (\bm{x}_i, R_i)$, so 
	\begin{equation}
	\label{eqn:BEJumpCondition1}
		\left( m (\dot{\bm{x}}_{i}^{+} - \dot{\bm{x}}_{i}^{-}), R_i S(\bm{\Omega}_{i}^{+} - \bm{\Omega}_{i}^{-})J_d \right) \cdot \delta q = 0.
	\end{equation}
	One immediate solution to this condition is $(\dot{\bm{x}}_{i}^{+},\bm{\Omega}_{i}^{+})  = (\dot{\bm{x}}_{i}^{-},\bm{\Omega}_{i}^{-} )$. However, this implies that the system leaves the admissible set $C$, so we look for other solutions by considering the possible variations on the tangent space at the boundary point $q(t_i)$. In order to accomplish this, we consider a local representation of the boundary $\partial C = \Phi^{-1}(0)$, where $0$ is a regular value of the collision detection function $\Phi$. From the Submersion Level Set Theorem, 
	\begin{equation}
	\label{eqn:tangentSpacepartialC}
		T_{q(t_i)} \partial C = \left\{ \left( \delta \bm{x},  R_i \eta \right) \mathrel{\bigg|}  \left( \frac{\partial \Phi_i}{\partial \bm{x}}, \frac{\partial \Phi_i}{\partial R} \right) \cdot (\delta \bm{x},  R_i \eta) = 0  \right\},
	\end{equation}
	where
	$
		\left( \frac{\partial \Phi_i}{\partial \bm{x}}, \frac{\partial \Phi_i}{\partial R} \right) = \left.\left( \frac{\partial \Phi}{\partial \bm{x}}, \frac{\partial \Phi}{\partial R} \right) \right|_{q(t_i)}.
	$
	Then, $ \left( \frac{\partial \Phi_i}{\partial \bm{x}}, \frac{\partial \Phi_i}{\partial R} \right) \cdot (\delta \bm{x},  R_i \eta) = \delta \bm{x}^T \frac{\partial \Phi_i}{\partial \bm{x}}  + \tr\left[\eta^T R_i^T \frac{\partial \Phi_i}{\partial R}\right]$. Applying the argument used to obtain \eqref{eqn:trIPtoSkewIP}, we have
	$
		\delta \bm{x}^T \frac{\partial \Phi_i}{\partial \bm{x}}  + \frac{1}{2} \tr\left[\eta^T \asym\left( R_i^T \frac{\partial \Phi_i}{\partial R}  \right)\right] = 0,
	$
	for any $\left( \delta \bm{x},  R_i \eta \right) \in T_{q(t_i)} \partial C$.
	
	The tangent space can be further identified as a hyperplane in $\R^6$. This involves finding $\bm{\chi}_i \in \R^3$ such that $S(\bm{\chi}_i) = \asym\left(R_i^T \frac{\partial \Phi_i}{\partial R}\right)$. Suppose that $R_i^T\frac{\partial \Phi_i}{\partial R}$ is given, then $\bm{\chi}_i$ can be computed using the inverse of the skew map. It can also be defined using the rows of $R_i$ and $\frac{\partial \Phi_i}{\partial R}$: Let $\bm{r}_{i_1},\bm{r}_{i_2},\bm{r}_{i_3} \in S^2$ and $\bm{\phi}_{i_1},\bm{\phi}_{i_2},\bm{\phi}_{i_3} \in \R^3$ be the successive columns of the rotation matrix $R_i^T$ and $\frac{\partial \Phi_i}{\partial R}^T$, respectively. Then,
	\[
		S(\bm{\chi}_i)
		= R_i^T \frac{\partial \Phi_i}{\partial R} - \frac{\partial \Phi_i}{\partial R}^T R_i 
		= S(\bm{\phi}_{i_1} \times \bm{r}_{i_1} + \bm{\phi}_{i_2} \times \bm{r}_{i_2} + \bm{\phi}_{i_3} \times \bm{r}_{i_3}),
	\]
	where \eqref{eqn:skew3} is used. Since $S$ is an isomorphism,
	\begin{equation}
	\label{eqn:chi}
		\bm{\chi}_i = \bm{\phi}_{i_1} \times \bm{r}_{i_1} + \bm{\phi}_{i_2} \times \bm{r}_{i_2} + \bm{\phi}_{i_3} \times \bm{r}_{i_3}.
	\end{equation}
	\begin{lemma}
	\label{lem:tangentSpaceAsym2}
		Let $q(t_i) = (\bm{x}_i,R_i) \in \partial C$ and $\bm{\chi}_i$ be given by \eqref{eqn:chi}. Then 
		\begin{equation}
			T_{q(t_i)} \partial C = \left\{ (\delta \bm{x},R_i S(\bm{\zeta})) \mathrel{\Big|}  \delta \bm{x}^T  \frac{\partial \Phi_i}{\partial \bm{x}} + \bm{\zeta}^T \bm{\chi}_i = 0 \right \}.
		\end{equation}
	\end{lemma}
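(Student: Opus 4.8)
The plan is to recognize that essentially all the genuine content has already been extracted in the displayed calculation immediately preceding the statement, where the vector $\bm{\chi}_i$ is computed and shown to satisfy $S(\bm{\chi}_i) = \asym\!\left(R_i^T \frac{\partial \Phi_i}{\partial R}\right)$. The lemma is then obtained simply by rewriting the constraint that defines $T_{q(t_i)}\partial C$ in terms of the vector $\bm{\zeta}$ in place of the skew-symmetric matrix $\eta$. I would therefore treat this as a bookkeeping argument that threads together the inner-product identities from Section~\ref{sect:ProblemNotation1}.

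First I would start from the characterization of the tangent space obtained just before the statement,
\[
	T_{q(t_i)}\partial C = \left\{ (\delta\bm{x}, R_i\eta) \;\Big|\; \delta\bm{x}^T \frac{\partial\Phi_i}{\partial\bm{x}} + \tfrac{1}{2}\tr\!\left[\eta^T \asym\!\left(R_i^T \frac{\partial\Phi_i}{\partial R}\right)\right] = 0 \right\},
\]
where $\eta$ ranges over $\mathfrak{so}(3)$. Since the skew map $S\colon \R^3 \to \mathfrak{so}(3)$ is an isomorphism with inverse \eqref{eqn:invskew}, every such $\eta$ is uniquely of the form $\eta = S(\bm{\zeta})$ for some $\bm{\zeta}\in\R^3$; because $R_i\eta = R_i S(\bm{\zeta})$, this reparametrization relabels the elements of the set without changing it. Next I would substitute $\asym\!\left(R_i^T \frac{\partial\Phi_i}{\partial R}\right) = S(\bm{\chi}_i)$, with $\bm{\chi}_i$ given by \eqref{eqn:chi}, so the trace term becomes $\tfrac{1}{2}\tr\!\left[S(\bm{\zeta})^T S(\bm{\chi}_i)\right]$. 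I then recognize this as the Lie algebra inner product via the identity $\langle \Omega_1,\Omega_2\rangle_S = \tfrac{1}{2}\tr[\Omega_2^T\Omega_1]$, and evaluate it through \eqref{eqn:lieinnerprod}, which collapses the trace and the factor of $\tfrac{1}{2}$ to the Euclidean pairing $\bm{\zeta}^T\bm{\chi}_i$. The defining constraint thus reduces to $\delta\bm{x}^T \frac{\partial\Phi_i}{\partial\bm{x}} + \bm{\zeta}^T\bm{\chi}_i = 0$, which is precisely the claimed description.

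There is no substantive obstacle here; the only points requiring care are bookkeeping ones. I would verify that the factor of $\tfrac{1}{2}$ is cleanly absorbed by the inner-product identity rather than left dangling, and confirm that the passage from $\eta$ to $\bm{\zeta} = S^{-1}(\eta)$ is a genuine bijection, so that the two descriptions of $T_{q(t_i)}\partial C$ coincide \emph{as sets} and not merely by one inclusion. Both facts follow immediately from the isomorphism property of the skew map and the relations \eqref{eqn:lieinnerprod} and \eqref{eqn:trIPtoSkewIP} established in the notation section, so the argument is short once $\bm{\chi}_i$ is in hand.
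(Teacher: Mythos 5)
Your proposal is correct and follows essentially the same route as the paper's proof: both substitute $\eta = S(\bm{\zeta})$ via the skew-map isomorphism, replace $\asym\bigl(R_i^T \tfrac{\partial \Phi_i}{\partial R}\bigr)$ with $S(\bm{\chi}_i)$, and collapse $\tfrac{1}{2}\tr[S(\bm{\zeta})^T S(\bm{\chi}_i)]$ to $\bm{\zeta}^T\bm{\chi}_i$ using the induced inner product \eqref{eqn:lieinnerprod}. Your additional remarks on the bijectivity of $S$ and the absorption of the factor $\tfrac{1}{2}$ are sound bookkeeping that the paper leaves implicit.
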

	
	\begin{proof}
		Express the elements of the tangent space as $(\delta \bm{x}, R_i \eta) = (\delta \bm{x}, R_i S(\bm{\zeta}))$ where $\bm{\zeta} \in \R^3$. Since $\asym\left( R_i^T \frac{\partial \Phi_i}{\partial R}  \right) = S(\bm{\chi}_i)$, we write $\frac{1}{2} \tr\left[\eta^T \asym\left( R_i^T \frac{\partial \Phi_i}{\partial R}  \right)\right] = \frac{1}{2}\tr[S(\bm{\zeta})^T S(\bm{\chi}_i)] = \bm{\zeta}^T \bm{\chi}_i$, where we used the induced inner product of $\R^3$ given in \eqref{eqn:lieinnerprod}. 
	\end{proof}
	
	\begin{remark} 
	\label{remark:hyperplane}
		$T_{q(t_i)} \partial C$ can be identified as a hyperplane $\left\{z \in \R^6 \mathrel{\Big|} \left( \tfrac{\partial \Phi_i}{\partial \bm{x}}^T \; \bm{\chi}_i^T \right) z = 0 \right\}$.
	\end{remark}
	\begin{theorem}
	\label{thm:jumpC1}
		Suppose $q(t_i) \in \partial C$. Then $\left( m (\dot{\bm{x}}_{i}^{+} - \dot{\bm{x}}_{i}^{-}), R_i S(\bm{\Omega}_{i}^{+} - \bm{\Omega}_{i}^{-})J_d \right) \cdot \delta q = 0$, for all $\delta q \in T_{q(t_i)} \partial C$, if and only if the first jump conditions, 
		\begin{subequations}
		\label{eqn:jumpC1}
			\begin{align}
				m(\dot{\bm{x}}_{i}^{+} - \dot{\bm{x}}_{i}^{-}) 
					&= \lambda  \frac{\partial \Phi_i}{\partial \bm{x}}	\label{eqn:jumpC11},	\\
				J(\bm{\Omega}_{i}^{+} - \bm{\Omega}_{i}^{-})
					&= \lambda \bm{\chi}_i	\label{eqn:jumpC12},
			\end{align}
		\end{subequations}
		are satisfied for some $\lambda \in \R \setminus \{0\}$.
	\end{theorem}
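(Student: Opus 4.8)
\section*{Proof proposal}

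The plan is to reduce the pairing condition \eqref{eqn:BEJumpCondition1} to a statement of orthogonality in $\R^6$ and then invoke the elementary fact that a vector annihilating a hyperplane must be parallel to its defining normal. First I would use Lemma \ref{lem:tangentSpaceAsym2} to write an arbitrary $\delta q \in T_{q(t_i)}\partial C$ as $(\delta\bm{x}, R_i S(\bm{\zeta}))$ subject to the single scalar constraint $\delta\bm{x}^T \tfrac{\partial\Phi_i}{\partial\bm{x}} + \bm{\zeta}^T\bm{\chi}_i = 0$. Substituting into the left side of \eqref{eqn:BEJumpCondition1}, the translational part pairs as $m(\dot{\bm{x}}_i^+-\dot{\bm{x}}_i^-)^T\delta\bm{x}$, while, using $R_i^T R_i = I$, the rotational part becomes the trace $\tr[(R_iS(\bm{\zeta}))^T R_i S(\Delta\bm{\Omega})J_d] = \tr[S(\bm{\zeta})^T S(\Delta\bm{\Omega})J_d]$, where $\Delta\bm{\Omega} := \bm{\Omega}_i^+ - \bm{\Omega}_i^-$.

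The key computation, and the step I expect to be the main obstacle, is to show that the rotational pairing collapses to $\bm{\zeta}^T J\Delta\bm{\Omega}$, namely
\[
	\tr[S(\bm{\zeta})^T S(\Delta\bm{\Omega})J_d] = \bm{\zeta}^T J\Delta\bm{\Omega}.
\]
I would substitute the identity \eqref{eqn:SJOmega} in the form $S(\Delta\bm{\Omega})J_d = S(J\Delta\bm{\Omega}) - J_d S(\Delta\bm{\Omega})$ to split the trace into two pieces. The piece containing $S(J\Delta\bm{\Omega})$ equals, up to sign, twice the Lie-algebra inner product $\langle S(\bm{\zeta}),S(J\Delta\bm{\Omega})\rangle_S = \bm{\zeta}^T J\Delta\bm{\Omega}$ by \eqref{eqn:lieinnerprod}. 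For the piece containing $J_d$, I would use the symmetry $J_d^T=J_d$, the skew-symmetry of $S(\cdot)$, and invariance of the trace under transposition and cyclic permutation to relate it back to the original trace; solving the resulting linear relation then isolates $\tr[S(\bm{\zeta})^T S(\Delta\bm{\Omega})J_d]$ and yields the stated identity. This is exactly the reduction already performed in the computation of $\delta\tilde{T}$ in Section \ref{sect:eqnOfMotionsM1}.

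With this identity in hand, condition \eqref{eqn:BEJumpCondition1} reads
\[
	\begin{pmatrix} m(\dot{\bm{x}}_i^+-\dot{\bm{x}}_i^-) \\ J\Delta\bm{\Omega}\end{pmatrix}^{\!T}\begin{pmatrix}\delta\bm{x}\\ \bm{\zeta}\end{pmatrix} = 0 \quad\text{for all } (\delta\bm{x},\bm{\zeta})\perp \begin{pmatrix}\tfrac{\partial\Phi_i}{\partial\bm{x}}\\ \bm{\chi}_i\end{pmatrix},
\]
that is, the momentum-jump vector annihilates the orthogonal complement of $(\tfrac{\partial\Phi_i}{\partial\bm{x}},\bm{\chi}_i)$. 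Because $0$ is a regular value of $\Phi$, this normal vector is nonzero, so its orthogonal complement is a genuine hyperplane in $\R^6$; hence any vector orthogonal to that hyperplane must be a scalar multiple $\lambda$ of the normal, which is precisely \eqref{eqn:jumpC11}--\eqref{eqn:jumpC12}. Conversely, any such multiple trivially satisfies the pairing, giving both directions of the equivalence.

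Finally, to justify $\lambda \in \R\setminus\{0\}$ I would exclude the trivial continuation $(\dot{\bm{x}}_i^+,\bm{\Omega}_i^+) = (\dot{\bm{x}}_i^-,\bm{\Omega}_i^-)$, which would prevent the trajectory from re-entering the admissible set $C$: if $\lambda = 0$ then $\dot{\bm{x}}_i^+ = \dot{\bm{x}}_i^-$ and, by invertibility of $J$, $\bm{\Omega}_i^+ = \bm{\Omega}_i^-$, so no genuine collision response occurs. Thus a nontrivial solution forces $\lambda \neq 0$, completing the characterization.
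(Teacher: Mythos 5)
Your proposal is correct and follows essentially the same route as the paper's proof: reduce the pairing via Lemma \ref{lem:tangentSpaceAsym2} to the Euclidean inner product of $\left(m(\dot{\bm{x}}_i^+-\dot{\bm{x}}_i^-),\, J(\bm{\Omega}_i^+-\bm{\Omega}_i^-)\right)$ with $(\delta\bm{x},\bm{\zeta}) \in \R^6$, using the identity \eqref{eqn:SJOmega} to collapse the rotational trace to $\bm{\zeta}^T J \Delta\bm{\Omega}$, and then conclude by the hyperplane-normal argument of Remark \ref{remark:hyperplane} (your derivation of the trace identity by substituting \eqref{eqn:SJOmega} and solving the linear relation $T = 2\bm{\zeta}^T J\Delta\bm{\Omega} - T$ is a cosmetic variant of the paper's use of \eqref{eqn:trIPtoSkewIP} together with $\asym(S(\Delta\bm{\Omega})J_d) = S(J\Delta\bm{\Omega})$). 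Your closing paragraph making explicit why $\lambda \neq 0$ --- excluding the trivial continuation, with invertibility of $J$ --- is if anything slightly more careful than the paper, which dispatches that case in the discussion preceding the theorem.
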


	\begin{proof}
	Let $\delta q =  (\delta \bm{x}, R_i S(\bm{\zeta}))$. We compute $\left( m (\dot{\bm{x}}_{i}^{+} - \dot{\bm{x}}_{i}^{-}), R_i S(\bm{\Omega}_{i}^{+} - \bm{\Omega}_{i}^{-})J_d \right) \cdot \delta q$, which becomes $\delta \bm{x}^T \left\{ m (\dot{\bm{x}}_{i}^{+} - \dot{\bm{x}}_{i}^{-}) \right\} +  \tr[S(\bm{\zeta})^T S(\bm{\Omega}_{i}^{+} - \bm{\Omega}_{i}^{-})J_d]$. The second term, following \eqref{eqn:trIPtoSkewIP}, turns into $\tfrac{1}{2}\tr[S(\bm{\zeta})^T \asym(S(\bm{\Omega}_{i}^{+} - \bm{\Omega}_{i}^{-})J_d)]$. Using \eqref{eqn:SJOmega}, $\asym(S(\bm{\Omega}_{i}^{+} - \bm{\Omega}_{i}^{-})J_d)) =  S(J(\bm{\Omega}_{i}^{+} - \bm{\Omega}_{i}^{-}))$, so the right-hand side becomes $\delta \bm{x}^T \left\{ m (\dot{\bm{x}}_{i}^{+} - \dot{\bm{x}}_{i}^{-}) \right\}  + \bm{\zeta}^T \left\{ J(\bm{\Omega}_{i}^{+} - \bm{\Omega}_{i}^{-}) \right\}$. If the jump conditions hold, the expression becomes $\lambda \left(\delta \bm{x}^T  \frac{\partial \Phi_i}{\partial \bm{x}} + \bm{\zeta}^T \bm{\chi}_i\right)$, which vanishes by Lemma \ref{lem:tangentSpaceAsym2}. If we assume that the LHS expression vanishes, the curly brackets must be a nonzero multiple of the normal vector $\left( \tfrac{\partial \Phi_i}{\partial \bm{x}}^T \; \bm{\chi}_i^T \right) \in \R^6$ by Remark \ref{remark:hyperplane}, which yield the jump conditions.
	\end{proof}
	
	We consider the second jump condition \eqref{eqn:impactEnergyConserved}, which is a statement of the conservation of energy. Recall that $E = \frac{\partial L}{\partial \dot{q}} \cdot \dot{q} - L$ and $\frac{\partial L}{\partial \dot{q}} = (m \dot{\bm{x}}, \dot{R}J_d)$, so $\frac{\partial L}{\partial \dot{q}} \cdot \dot{q} = (m \dot{\bm{x}}, \dot{R}J_d) \cdot (\dot{\bm{x}}, \dot{R}) = m\|\dot{\bm{x}}\|^2 + \tr[\dot{R}J_d\dot{R}^T]$. Therefore, the energy may be written as
	\begin{subequations}
	\label{eqn:contEnergy}
	\begin{align}
		E
		&= \frac{1}{2}m \|\dot{\bm{x}}\|^2 + \frac{1}{2}\tr[\dot{R} J_d \dot{R}^T] + mg\bm{e_3}^T\bm{x}, 	\label{eqn:contEnergy1}\\
		&= \frac{1}{2}m \|\dot{\bm{x}}\|^2 + \frac{1}{2}\tr[S(\bm{\Omega}) J_d S(\bm{\Omega})^T] + mg\bm{e_3}^T\bm{x}	\label{eqn:contEnergy2}	\\
		&= \frac{1}{2}m \|\dot{\bm{x}}\|^2 + \frac{1}{2}\bm{\Omega}^T J \bm{\Omega} + mg\bm{e_3}^T\bm{x}	.\label{eqn:contEnergy3}
	\end{align}
	\end{subequations}
	The second jump condition for our system is given by $0 = E(q(t_i^+),\dot{q}(t_i^+)) - E(q(t_i^-),\dot{q}(t_i^-))$. Using \eqref{eqn:contEnergy3}, the full set of jump conditions become 
	\begin{myframedeq} 
			$F_{\jump} : (\bm{x}_i,R_i,\dot{\bm{x}}_{i}^{-},\bm{\Omega}_{i}^{-}) \to (\lambda, \dot{\bm{x}}_{i}^{+},\bm{\Omega}_{i}^{+})$
	\begin{subequations}
	\label{eqn:jumpCondF}
		\begin{align}
			m\dot{\bm{x}}_{i}^{+}
				&= m \dot{\bm{x}}_{i}^{-}  + \lambda  \frac{\partial \Phi_i}{\partial \bm{x}}	\label{eqn:jumpCondF1},	\\
			J\bm{\Omega}_{i}^{+}
				&= J \bm{\Omega}_{i}^{-} + \lambda \bm{\chi}_i	\label{eqn:jumpCondF2},	\\
			0 
				&= \frac{1}{2}m(\|\dot{\bm{x}}_{i}^{+}\|^2 - \|\dot{\bm{x}}_{i}^{-}\|^2) + \frac{1}{2}\left( \bm{\Omega}_{i}^{+T} J \bm{\Omega}_{i}^{+} - \bm{\Omega}_{i}^{-T} J \bm{\Omega}_{i}^{-} \right).	\label{eqn:jumpCondF3}
		\end{align}
	\end{subequations}
	\end{myframedeq}
	Denote the solution to $(\dot{\bm{x}}_{i}^{+},\bm{\Omega}_{i}^{+})$ with $\lambda \neq 0$ as a discrete map $F_{\jump}$ with the necessary arguments above. In particular, $\lambda$ is obtained by substituting $(\dot{\bm{x}}_{i}^{+},\bm{\Omega}_{i}^{+})$ from \eqref{eqn:jumpCondF1} and \eqref{eqn:jumpCondF2} into \eqref{eqn:jumpCondF3}, which gives a quadratic equation for the variable $\lambda$. One root will always be $\lambda = 0$, which is omitted. Then, there is a unique nonzero root $\lambda$, which gives $(\dot{\bm{x}}_{i}^{+},\bm{\Omega}_{i}^{+})$.
	
		\subsubsection{Jump Conditions: Hamiltonian Form}
		\label{sect:JumpCondHamiltonian}
		It is actually natural to express the jump conditions on the Hamiltonian side since both conservation of energy and conservation of momentum can easily be described on the cotangent bundle. We still write $(\bm{x}_i,R_i) \in \partial C$ as the configuration at impact. Denote the instantaneous linear and angular momentum before and after impact as $\bm{\gamma}^{\pm} = m \dot{\bm{x}}^{\pm}$ and $\bm{\Pi}^{\pm} = J\bm{\Omega}^{\pm}$, respectively. Suppose that $J$ is invertible, then we obtain the following result.
		\begin{corollary}
		\label{cor:jumpCondsH}
			Let $(\bm{x}_i,R_i) \in \partial C$ and $(\bm{\gamma}^{-},\bm{\Pi}^{-})$ be given; let $\bm{\chi}_i$ be defined by \eqref{eqn:chi}. There is a unique $\lambda \in \R \setminus \{0\}$ and $(\bm{\gamma}^{+},\bm{\Pi}^{+})$ satisfying
			\pagebreak
			\begin{myframedeq}
				$\tilde{F}_{\jump} : (\bm{x}_i,R_i,\bm{\gamma}^{-},\bm{\Pi}^{-}) \to (\lambda,\bm{\gamma}^{+},\bm{\Pi}^{+})$
			\begin{subequations}
			\label{eqn:jumpCondsHamiltonian}
				\begin{align}
					\bm{\gamma}^{+}
						&= \bm{\gamma}^{-} + \lambda \frac{\partial \Phi_i}{\partial \bm{x}}, \label{eqn:jumpCondsHamiltonian1}	\\
					\bm{\Pi}^{+} 
						&= \bm{\Pi}^{-} + \lambda \bm{\chi}_i, \label{eqn:jumpCondsHamiltonian2}	\\
					0
						&= \frac{1}{2m}(\| \bm{\gamma}^{+} \|^2 - \| \bm{\gamma}^{-} \|^2) + \frac{1}{2}({\bm{\Pi}^{+}}^{T}J^{-1}\bm{\Pi}^{+} - {\bm{\Pi}^{-}}^{T}J^{-1}\bm{\Pi}^{-}).\label{eqn:jumpCondsHamiltonian3}
				\end{align}
			\end{subequations}
			\end{myframedeq}
		\end{corollary}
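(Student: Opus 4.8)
The plan is to obtain the Hamiltonian jump conditions as the image of the already-established Lagrangian jump conditions \eqref{eqn:jumpCondF} under the Legendre transform computed in Section~\ref{sect:eqnOfMotionsM3}. Since that transform is $\F\tilde{L} : (\bm{x},R,\dot{\bm{x}},S(\bm{\Omega})) \mapsto (\bm{x},R,m\dot{\bm{x}},S(J\bm{\Omega}))$, the momenta are tied to the velocities by $\bm{\gamma}^{\pm} = m\dot{\bm{x}}_i^{\pm}$ and $\bm{\Pi}^{\pm} = J\bm{\Omega}_i^{\pm}$. Because $J$ is assumed invertible and $m > 0$, this is a linear bijection between $(\dot{\bm{x}}_i^{\pm},\bm{\Omega}_i^{\pm})$ and $(\bm{\gamma}^{\pm},\bm{\Pi}^{\pm})$, so both the system and its existence/uniqueness transfer from the Lagrangian setting; the work is essentially a change of variables.

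First I would substitute $m\dot{\bm{x}}_i^{\pm} = \bm{\gamma}^{\pm}$ into \eqref{eqn:jumpCondF1} and $J\bm{\Omega}_i^{\pm} = \bm{\Pi}^{\pm}$ into \eqref{eqn:jumpCondF2}; these are already affine in the momenta and reproduce \eqref{eqn:jumpCondsHamiltonian1} and \eqref{eqn:jumpCondsHamiltonian2} verbatim. For the energy condition \eqref{eqn:jumpCondF3} I would rewrite each kinetic term: $\tfrac{1}{2}m\|\dot{\bm{x}}\|^2 = \tfrac{1}{2m}\|\bm{\gamma}\|^2$, and, using that $J$ (hence $J^{-1}$) is symmetric together with $\bm{\Omega} = J^{-1}\bm{\Pi}$, $\tfrac{1}{2}\bm{\Omega}^T J\bm{\Omega} = \tfrac{1}{2}\bm{\Pi}^T J^{-1}\bm{\Pi}$. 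This yields \eqref{eqn:jumpCondsHamiltonian3} and confirms that the three displayed equations are precisely the Legendre image of \eqref{eqn:jumpCondF}.

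It then remains to confirm the existence of a unique nonzero $\lambda$. Substituting \eqref{eqn:jumpCondsHamiltonian1}--\eqref{eqn:jumpCondsHamiltonian2} into \eqref{eqn:jumpCondsHamiltonian3} and expanding the squares, the zeroth-order terms cancel and the expression factors as $\lambda\big[\,B + \tfrac{\lambda}{2}A\,\big] = 0$, where $A = \tfrac{1}{m}\big\|\tfrac{\partial \Phi_i}{\partial \bm{x}}\big\|^2 + \bm{\chi}_i^T J^{-1}\bm{\chi}_i$ and $B = \tfrac{1}{m}\bm{\gamma}^{-T}\tfrac{\partial \Phi_i}{\partial \bm{x}} + \bm{\Pi}^{-T}J^{-1}\bm{\chi}_i$. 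One root is the spurious $\lambda = 0$, corresponding to no jump $(\bm{\gamma}^+,\bm{\Pi}^+) = (\bm{\gamma}^-,\bm{\Pi}^-)$, which is excluded because it would leave $C$; the other is $\lambda = -2B/A$.

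Finally I would argue $A \neq 0$ and $B \neq 0$ so that this nonzero root is well-defined and unique. Since the moment of inertia $J$ is positive definite, so is $J^{-1}$, hence $A \geq 0$ with equality only if both $\tfrac{\partial \Phi_i}{\partial \bm{x}} = 0$ and $\bm{\chi}_i = 0$; but $\big(\tfrac{\partial \Phi_i}{\partial \bm{x}},\bm{\chi}_i\big)$ is the nonzero normal to $T_{q(t_i)}\partial C$ from Remark~\ref{remark:hyperplane}, because $0$ is a regular value of $\Phi$ by \eqref{eqn:admissibleSetbd}, so $A > 0$. By the computation relating $\tfrac{\partial \Phi_i}{\partial R}$ to $\bm{\chi}_i$ via \eqref{eqn:trIPtoSkewIP} and \eqref{eqn:chi}, the coefficient $B$ equals $\dot{\Phi}$ evaluated just before impact, which is nonzero for a genuine, transversal collision. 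This last point is the expected obstacle, as it is precisely where the physical assumption that the body actually strikes the plane (rather than grazing it tangentially) enters the argument. With $A > 0$ and $B \neq 0$, the root $\lambda = -2B/A$ is nonzero and unique, and \eqref{eqn:jumpCondsHamiltonian1}--\eqref{eqn:jumpCondsHamiltonian2} then determine $(\bm{\gamma}^+,\bm{\Pi}^+)$ uniquely.
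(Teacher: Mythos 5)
Your proposal is correct and follows essentially the same route as the paper: pass from the Lagrangian jump conditions \eqref{eqn:jumpCondF} to momenta via the Legendre transform $\bm{\gamma}^{\pm} = m\dot{\bm{x}}_i^{\pm}$, $\bm{\Pi}^{\pm} = J\bm{\Omega}_i^{\pm}$, then substitute \eqref{eqn:jumpCondsHamiltonian1}--\eqref{eqn:jumpCondsHamiltonian2} into the energy balance to obtain a quadratic in $\lambda$ whose trivial root $\lambda = 0$ is discarded in favor of the unique nonzero root. If anything, you supply more detail than the paper, which simply remarks that the quadratic is solved for its nonzero root; your explicit check that $A > 0$ (positive definiteness of $J^{-1}$ plus regularity of $0$ for $\Phi$) and the identification of $B$ with $\dot{\Phi}(t_i^-)$, nonzero by transversality of the impact, make precise exactly where the paper's implicit nondegeneracy assumption enters.
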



\section{Lie Group Variational Collision Integrators for the Bouncing Ellipsoid}
\label{sect:LGVCI}
	For the discrete setting, we also follow the approach described in \cite{lee2007lie}. This involves the construction of the discrete Lagrangian by approximating a segment of the action integral via the trapezoidal rule. However, we first approximate the linear and angular velocity for a segment of the action integral. We introduce the auxiliary variable $F_k \in SO(3)$ so that $R_{k+1} = R_k F_k$. Note that $F_k$ represents the relative attitude between $R_k$ and $R_{k+1}$, and it is guaranteed that the attitude evolves on $SO(3)$ since $F_k \in SO(3)$. Now, using $\dot{R} = RS(\bm{\Omega})$, approximate the $k$-th angular velocity as
	\begin{equation}
	\label{eqn:Fk}
		S(\bm{\Omega}_k) = R_k^T \dot{R}_k \approx R_k^T \frac{R_{k+1} - R_k}{h} = \frac{1}{h}(F_k - I_3).
	\end{equation}
	The linear velocity $\dot{\bm{x}}_k$ is approximated by $(\bm{x}_{k+1} - \bm{x}_{k})/h$. Substitute the approximations above into the modified Lagrangian \eqref{eqn:ellipsoidmLagrangian2}, so the approximation of the kinetic term becomes 
	\[
		T(\dot{\bm{x}}_k,S(\bm{\Omega}_k)) 
			\approx T((\bm{x}_{k+1} - \bm{x}_{k})/h, (F_k - I_3)/h) 
			= \frac{1}{2h^2}\| \bm{x}_{k+1} - \bm{x}_{k} \|^2 + \frac{1}{h^2}\tr\left[(I_3 - F_k)J_d \right].
	\]
	We obtain the discrete Lagrangian by approximating the action integral using the trapezoidal rule:
	\begin{equation}
	\label{eqn:discreteL}
			L_d (\bm{x}_k,R_k,\bm{x}_{k+1},F_k)
			= \frac{1}{2h}m\| \bm{x}_{k+1} - \bm{x}_{k} \|^2 + \frac{1}{h}\tr[(I_3 - F_k)J_d]
			-\frac{1}{2}mgh\bm{e_3}^T(\bm{x}_{k+1} + \bm{x}_{k}).
	\end{equation}
	\subsection{Discrete Equations of Motion}
	\label{sect:discreteEqofM}
	
	We will obtain the discrete equations of motion away from the point of impact directly from the first result \eqref{eqn:discreteEL} of Theorem \ref{thm:discreteThm}. More specifically, the result is obtained by taking the variations of the discrete variables on the discrete action sum and applying the discrete Hamilton's principle.
	
		\subsubsection{Lagrangian Form}
		\label{sect:discreteEqofMLF}
	Let $q_k = (\bm{x}_k,R_k)$ for $k \in \{0, \ldots, i-2, i+1, \ldots, N\}$. Consider the following variations of the discrete variables. Namely, the variation $\delta \bm{x}_k \in \R^3$ of $\bm{x}_k$ which vanishes at $k =0$ and $k=N$; the variation of $R_k$ is given by $\delta R_k = R_k \eta_k$ where $\eta_k \in \mathfrak{so}(3)$ and also vanishes at $k=0$ and $k=N$.

	Recall that $F_k = R_k^TR_{k+1}$ and write $\delta q_k = (\delta \bm{x}_k, R_k \eta_k)$. The following identities are derived using \eqref{eqn:trIPtoSkewIP} and matrix derivatives: 
	\begin{align}
		D_2 L_d(q_{k-1},q_k,h)\cdot \delta q_k
		&= \left( \tfrac{1}{h}m(\bm{x}_k - \bm{x}_{k-1}) - \tfrac{1}{2} m g h \bm{e_3}, - \tfrac{1}{h}\asym(F_{k-1}^T J_d) \right) \cdot_S (\delta \bm{x}_k,\eta_k),	\label{eqn:discreteLdPartial1} \\
		D_1 L_d(q_k,q_{k+1},h)\cdot \delta q_k
		&= \left( \tfrac{1}{h}m(\bm{x}_k - \bm{x}_{k+1}) - \tfrac{1}{2} m g h \bm{e_3}, - \tfrac{1}{h}\asym(F_k J_d) \right) \cdot_S (\delta \bm{x}_k,\eta_k). \label{eqn:discreteLdPartial2}
	\end{align}
	Note that $-\asym(F_{k-1}^TJ_d) = \asym(J_dF_{k-1})$. Using the discrete Euler--Lagrange equation \eqref{eqn:discreteEL}, their sum vanishes
 	\[
		0 = [D_2L_d(q_{k-1},q_k,h) + D_1L_d(q_{k},q_{k+1},h)] \cdot \delta q_k,
	\]
	giving $0 = -\delta\bm{x}^T\left\{ \tfrac{1}{h}m(\bm{x}_{k+1} - 2 \bm{x}_{k} + \bm{x}_{k-1}) + mgh\bm{e_3} \right\} + \frac{1}{2}\tr \left[ \eta_k^T\left\{ \tfrac{1}{h}\asym(J_d F_{k-1} - F_k J_d) \right\} \right]$. This equation holds for all $k = 1, \ldots, i-2, i+1, \ldots, N$, if and only if the expressions in the curly brackets vanish. We arrive at the \textit{discrete equations of motion in Lagrangian form}, denoted as $FL_d$ in Table \ref{tab:LGVCI_F_Ld}.
	
	The \textit{discrete Lagrangian map} $FL_d$ has the following parameters $[h,h]$: The first $h$ indicates the timestep for the interval $[t_{k-1},t_k]$ with the corresponding configurations $(\bm{x}_{k-1},R_{k-1})$ at time $t_{k-1}$ and $(\bm{x}_k,R_k)$ at time $t_k$; the second $h$ indicates the timestep for $[t_k,t_{k+1}]$ with its corresponding configurations $(\bm{x}_k,R_k)$ at $t_k$ and $(\bm{x}_{k+1},R_{k+1})$ at $t_{k+1}$.  To compute the map, solve for the $\bm{x}_{k+1}$ first. $F_k$ is obtained next using the second, implicit equation where $F_{k-1} = R_{k-1}^TR_k$ (see Section 3.4 in \cite{Lee2007orbital}). In fact, similar implicit equations show up in all of the discrete maps that follow. Finally, $R_{k+1}$ is updated after obtaining $F_k$.
		\subsubsection{Hamiltonian Form}
		\label{sect:discreteEqofMHF}
	Using the discrete Legendre transforms, we will arrive at a set of discrete equations of motion based on discrete positions and momenta, which we will collectively call \textit{states}. Denote the linear and angular momentum by $p_k = (\bm{\gamma}_k,S(\bm{\Pi}_k))$. Recall Theorem \ref{thm:discreteThm} and \eqref{eqn:discreteLegendreTrans} for the discrete Legendre transforms:
	\begin{align}
		(\bm{\gamma}_k,S(\bm{\Pi}_k)) \cdot_S (\delta \bm{x}_k,\eta_k) 
		&= -D_1 L_d(q_k,q_{k+1},h) \cdot \delta q_k, \label{eqn:pkEllipsoid} \\
		(\bm{\gamma}_{k+1},S(\bm{\Pi}_{k+1})) \cdot_S (\delta \bm{x}_{k+1},\eta_{k+1}) 
		&= D_2 L_d(q_k,q_{k+1},h) \cdot \delta q_{k+1}. \label{eqn:pkp1Ellipsoid}
	\end{align}
	\eqref{eqn:pkEllipsoid} gives $(\bm{\gamma}_k,S(\bm{\Pi}_k)) = \left( \tfrac{1}{h}m(\bm{x}_{k+1} - \bm{x}_{k}) + \tfrac{h}{2}mg\bm{e_3},\tfrac{1}{h}\asym(F_{k} J_d) \right)$, and \eqref{eqn:pkp1Ellipsoid} produces $(\bm{\gamma}_{k+1},S(\bm{\Pi}_{k+1})) = \left( \tfrac{1}{h}m(\bm{x}_{k+1} - \bm{x}_{k}) - \tfrac{h}{2}mg\bm{e_3},\tfrac{1}{h}\asym(J_d F_{k}) \right)$. Hence, $\bm{x}_{k+1}$ and $\bm{\gamma}_{k+1}$ can be expressed in terms of $\bm{x}_k$ and $\bm{\gamma}_{k}$ from the equations arising from the first components of \eqref{eqn:pkEllipsoid} and \eqref{eqn:pkp1Ellipsoid}, respectively. Since $S(\bm{\Pi}_k) = \frac{1}{h}\asym(F_{k} J_d)$,
	\[
		S(\bm{\Pi}_{k+1}) 
		= \tfrac{1}{h}\asym(J_d F_{k})
		= F_{k}^T \tfrac{1}{h}\asym(F_{k} J_d) F_{k}
		= F_{k}^T S(\bm{\Pi}_k) F_{k}
		= S(F_k^T \bm{\Pi}_k),
	\]
	using property \eqref{eqn:skew4} of the skew map. As a result, the \textit{discrete equations of motion in Hamiltonian form} are given as $\tilde{F}_{L_d}$ in Table \ref{tab:LGVCI_F_tilde_Ld}.
	The \textit{discrete Hamiltonian map} $\tilde{F}_{L_d}$ has the following parameter $[h]$: This $h$ indicates the timestep for the interval $[t_{k},t_{k+1}]$ with the corresponding states $(\bm{x}_k,R_k,\bm{\gamma}_k,\bm{\Pi}_k)$ at time $t_k$ and $(\bm{x}_{k+1},R_{k+1},\bm{\gamma}_{k+1},\bm{\Pi}_{k+1})$ at time $t_{k+1}$. Similar to the Lagrangian form, $\bm{x}_{k+1}$ and $\bm{\gamma}_{k+1}$ can be computed first. Compute $F_k$ from the third, implicit equation, which is used to update $\bm{\Pi}_{k+1}$ and $R_{k+1}$.
	\subsection{Impact Point and Time}
	\label{sect:discreteImpactPointTime}
	Recall the definition of the collision detection function $\Phi$ from \eqref{eqn:distBouncingEllipsoid}, which allows us to detect collisions in the system. For each integration step discussed in Section \ref{sect:discreteEqofM}, $(\bm{x}_{k+1},R_{k+1})$ are computed. Hence, one may check for interpenetration after each integration by evaluating $\Phi(\bm{x}_{k+1},R_{k+1})$. 
	
	If the signed distance is positive, then we proceed to the next integration step. If the signed distance is zero, then the current configuration and time is the impact point and time, and we will have to apply the discrete jump conditions.
	If the evaluation is negative, then interpenetration has occurred, the current integration step is discarded, and so we consider \eqref{eqn:discreteImpact1} of Theorem \ref{thm:discreteThm}, and we attempt to resolve the impact point and time.
	
		\subsubsection{Lagrangian Form}
		\label{sect:discreteImpactLF}
	Note that the impact point would occur at time $\tilde{t} = t_{i-1} + \alpha h$ for some $\alpha \in (0,1)$. Similarly, we rewrite \eqref{eqn:discreteImpact1}
	\[
		0 = [D_2 L_d(q_{i-2},q_{i-1},h) + D_1 L_d(q_{i-1},\tilde{q},\alpha h)]\cdot \delta q_{i-1}
	\]
	as the discrete map ${F_{L_d}^\text{Imp}}[h,\alpha h] $ in Table \ref{tab:LGVCI_F_Ld}.
	We compute the solution using the bisection method to solve for $\alpha \in (0,1)$. There is a unique $\alpha$ in the open interval such that $\Phi(\tilde{\bm{x}},\tilde{R}) = 0$. This is the case because $\Phi$ is defined to be positive when the ellipsoid is above the plane and negative when the ellipsoid is interpenetrating or below the plane. Once $\alpha$ is solved, $(\tilde{\bm{x}},\tilde{R})$ and $F_{i-1}$ can be computed following the steps in $FL_d[h,h]$.
	
		\subsubsection{Hamiltonian Form}
		\label{sect:discreteImpactHF}
	On the Hamiltonian side, we compute the following discrete Legendre transforms,
	\begin{align*}
		(\bm{\gamma}_{i-1},S(\bm{\Pi}_{i-1})) \cdot_S (\delta \bm{x}_{i-1},\eta_{i-1}) 
		&= -D_1 L_d(q_{i-1},\tilde{q},\alpha h) \cdot \delta q_{i-1},	\\
		(\tilde{\bm{\gamma}},S(\tilde{\bm{\Pi}})) \cdot_S (\delta \tilde{\bm{x}},\tilde{\eta}) 
		&= D_2 L_d(q_{i-1},\tilde{q},\alpha h) \cdot \delta \tilde{q}.
	\end{align*}
	We obtain the equations for the impact point in Hamiltonian form, $\tilde{F}_{L_d}^\text{Imp}[\alpha h]$ in Table \ref{tab:LGVCI_F_tilde_Ld}. The $\alpha \in (0,1)$ and $(\tilde{\bm{x}},\tilde{R},\tilde{\bm{\gamma}},\tilde{\bm{\Pi}})$ are solved for using the bisection method, as before.
	
	\subsection{Single Impact}
	\label{sect:SingleImpact}
	We consider the next integration step, which would give us the next discrete configuration after the impact configuration. In this subsection, assume that there is one collision in the time interval $(t_{i-1},t_i)$ occurring at time $\tilde{t} = t_{i-1} + \alpha h$. Therefore, our next discrete configuration occurs at time $t_i = \tilde{t} + (1-\alpha)h$. 
	
		\subsubsection{Lagrangian Form}
		\label{sect:discreteConfigiLF}
	The equation \eqref{eqn:discreteImpact2} of Theorem \ref{thm:discreteThm} is used for our next integration step. Recall the discrete energy from \eqref{eqn:discreteEnergy}, which gives
	\begin{equation}
		E_d(q_k,q_{k+1},h)
		= \frac{1}{2h^2}m\| \bm{x}_{k+1} - \bm{x}_k \|^2 + \frac{1}{h^2}\tr[(I_3 - F_k)J_d] + \frac{1}{2}mg\bm{e_3}^T(\bm{x}_{k+1} + \bm{x}_k),
	\end{equation}
	and so \eqref{eqn:discreteImpact2pt1} can be easily written. In addition, the conservation of discrete momentum described with the cotangent lift in \eqref{eqn:discreteImpact2pt2} is written in terms of a local representation of the contact set $\partial C = \Phi^{-1}(0)$. This is the same constraint formulation as in the continuous case, so we write our next set of equations after computing
	\begin{equation*}
		0 = i^*\left(D_2 L_d(q_{i-1},\tilde{q},\alpha h) + D_1 L_d(\tilde{q},q_i, (1-\alpha)h)\right) \cdot \delta \tilde{q}.
	\end{equation*}

	On the Lagrangian side, we obtain $F_{L_d}^i[\alpha h, (1-\alpha)h][\lambda] $ in Table \ref{tab:LGVCI_F_Ld}. Observe that $\lambda \neq 0$,  and the partial derivatives are
	\begin{equation}
		\left( \frac{\partial \tilde{\Phi}}{\partial \bm{x}}, \frac{\partial \tilde{\Phi}}{\partial R} \right) 
		= \left( \frac{\partial \Phi}{\partial \bm{x}}, \biggl.\frac{\partial \Phi}{\partial R} \right) \biggr \lvert_{(\tilde{\bm{x}},\tilde{R})} 
		=  \left( \bm{n}, -\frac{\bm{n}\bm{n}^T \tilde{R} I_\epsilon^2}{\| I_\epsilon \tilde{R}^T \bm{n} \|} \right).
	\label{eqn:ellipsoidpartials}
	\end{equation}
	Furthermore, $[\lambda]$ indicates the requirement to compute it first. In fact, this is the same $\lambda$ as in Theorem \ref{thm:jumpC1} in the continuous case. However, note that determining $\lambda$ on the Lagrangian side can be difficult since we do not have information on the instantaneous linear and angular velocity $(\dot{\bm{x}}^{-}, \bm{\Omega}^{-})$ before the impact. Therefore, the solution of $\lambda$ is discussed on the Hamiltonian side, which follows easily from Corollary \ref{cor:jumpCondsH}. Once $\lambda$ is determined, $(\bm{x}_{i},R_{i})$ can be solved similarly to the previous discrete Lagrangian maps.
	
	Finally, we write the last set of equations from \eqref{eqn:discreteImpact3} by first computing 
	\begin{equation*}
		0 = [D_2L_d(\tilde{q},q_i,(1-\alpha)h) + D_1L_d(q_i,q_{i+1},h)] \cdot \delta q_i.
	\end{equation*}
	In Lagrangian form, we get $F_{L_d}^{i+1}[(1-\alpha)h,h]$ in Table \ref{tab:LGVCI_F_Ld}.
	
		\subsubsection{Hamiltonian Form}
		\label{sect:discreteConfigiHF}
	Again, we compute the discrete Legendre transforms,
	\begin{align*}
		(\tilde{\bm{\gamma}},S(\tilde{\bm{\Pi}})) \cdot_S (\delta \tilde{\bm{x}},\tilde{\eta}) 
			&= -D_1 L_d(\tilde{q},q_i,(1-\alpha) h) \cdot \delta \tilde{q},	\\
		(\bm{\gamma}_i,S(\bm{\Pi}_i)) \cdot_S (\delta \bm{x}_i, \eta_i) 
			&= D_2 L_d(\tilde{q},q_i,(1-\alpha) h)) \cdot \delta q_i.
	\end{align*}
	We obtain the equations in Hamiltonian form as $\tilde{F}_{L_d}^\text{Imp}[\alpha h]$ in Table \ref{tab:LGVCI_F_tilde_Ld}. Note that 
		$
			S(\tilde{\bm{\chi}}) = \asym\left(\tilde{R}^T \frac{\partial \tilde{\Phi}}{\partial R}\right),
		$
	where $\tilde{\bm{\chi}}$ can be computed by either using $S^{-1}$ or \eqref{eqn:chi}. Similarly, $[\lambda]$ indicates that it needs to be solved first; to solve for $\lambda$, invoke Corollary \ref{cor:jumpCondsH} by setting $(\tilde{\bm{x}},\tilde{R}) \in \partial C$ as the configuration at impact and letting $(\tilde{\bm{\gamma}},\tilde{\bm{\Pi}})$ = $(\bm{\gamma}^{-},\bm{\Pi}^{-})$. In fact, this will not only solve for $\lambda \neq 0$ but also $(\bm{\gamma}^{+},\bm{\Pi}^{+})$, and this fact will be used to optimize our algorithm in the end of this section.
	
	Lastly, we solve for the next set of states at time $t_{i+1}$ by computing the next set of discrete Legendre transforms,
	\begin{align*}
		(\bm{\gamma}_{i},S(\bm{\Pi}_{i}) \cdot_S (\delta \tilde{\bm{x}},\tilde{\eta}) 
			&= -D_1 L_d(q_i, q_{i+1}, h) \cdot \delta q_i,	\\
		(\bm{\gamma}_{i+1},S(\bm{\Pi}_{i+1})) \cdot_S (\delta \bm{x}_i, \eta_i) 
			&= D_2 L_d(q_i, q_{i+1}, h)) \cdot \delta q_{i+1}.
	\end{align*}
	However, this yields the same discrete Hamiltonian map $\tilde{F}_{L_{d}}[h]$, which is unsurprising because the discrete flow from $(\bm{x}_i,R_{i},\bm{\gamma}_{i},\bm{\Pi}_{i}) \mapsto (\bm{x}_{i+1},R_{i+1},\bm{\gamma}_{i+1},\bm{\Pi}_{i+1})$ on the time interval $[t_i,t_{i+1}]$ is given by $\tilde{F}_{L_{d}}[h]$.
	
	\subsection{Multiple Impacts}
	\label{sect:MultipleImpacts}
	Suppose that multiple impacts occur in the interval $(t_{i-1},t_i)$. For concreteness, we assume that there are $l$ impacts. From Section \ref{sect:discreteImpactPointTime}, we determined that the first impact occurs at $\tilde{t} = t_{i-1} + \alpha h$ where $\alpha \in (0,1)$. Let $\alpha_1 = \alpha$ and introduce 
	\[
		\alpha_k \in (0, 1 - \alpha_{\Sigma_{k}}),
		\qquad 
		\alpha_{\Sigma_{k}} = \sum_{j=1}^{k} \alpha_j,
	\]
	where $k = 1,2, \ldots, l$. Then, denote the configurations of impact by $\tilde{q}_k= (\tilde{\bm{x}}_k, \tilde{R}_k)$ which occurs at the time $\tilde{t}_k = t_{i-1} + \alpha_{\Sigma_{k}}h$ for each assumed collision; we also write $\tilde{R}_{k+1} = \tilde{R}_k \tilde{F}_k$. In addition, $\Phi(\tilde{\bm{x}}_k, \tilde{R}_k) = 0$.
	
		\subsubsection{Lagrangian Form}
		\label{sect:discreteConfigImpactPLF}
	For our next integration step, we combine the conservation of discrete energies and 
	\begin{equation*}
		0 = [D_2L_d(q_{i-1},\tilde{q}_1,\alpha_1 h) + D_1L_d(\tilde{q}_1,\tilde{q}_2,\alpha_2 h)] \cdot \delta \tilde{q}_1,
	\end{equation*}
	so we arrive at $F_{L_d}^{\text{Imp} +}[\lambda][\alpha_1 h, \alpha_2 h]$ in Table \ref{tab:LGVCI_F_Ld}. Observe that $\lambda \neq 0$, and the partial derivatives are computed similarly to \eqref{eqn:ellipsoidpartials}, but is evaluated at $(\tilde{\bm{x}}_{2},\tilde{R}_{2})$. We solve for the next discrete configuration $(\tilde{\bm{x}}_2, \tilde{R}_2)$ by solving for $\lambda$ first in the same way as $F_{L_d}^i[\alpha h, (1- \alpha)h]$. Next, $\alpha_2 \in (0,1-\alpha_{\Sigma_1})$ is determined using the bisection method and using $\Phi(\tilde{\bm{x}}_2, \tilde{R}_2)$ for the stopping criteria. Given both $\lambda$ and $\alpha_2$, proceed to solve for $(\tilde{\bm{x}}_{2},\tilde{R}_{2})$ using the middle three equations.
	
	In general, we use the same map $F_{L_d}^{\text{Imp} +}[\alpha_k h, \alpha_{k+1} h][\lambda]$ to find the subsequent impact configurations for $k = 2, \ldots, l-1$. Once we have determined the last collision, we can find the configuration $(\bm{x}_i,R_i)$ using $F_{L_d}^i [\alpha_l h, (1-\alpha_{\Sigma_{l}}) h][\lambda]$ from Section \ref{sect:SingleImpact}. Finally, we use $F_{L_d}^{i+1} [(1-\alpha_{\Sigma_{l}}) h, h]$ to determine the configuration $(\bm{x}_{i+1},R_{i+1})$.
	
		\subsubsection{Hamiltonian Form}
		\label{sect:discreteConfigImpactPHF}
	Denote the linear and angular momentum for the configurations at impact by $(\tilde{\bm{\gamma}}_k,\tilde{\bm{\Pi}}_k)$, respectively, where $1 \leq k \leq l$. We compute the Legendre transforms,
	\begin{align*}
		(\tilde{\bm{\gamma}}_1,S(\tilde{\bm{\Pi}}_1)) \cdot_S(\delta \tilde{\bm{x}}_1,\tilde{\eta}_1 ) &= -D_1L_d(\tilde{q}_1,\tilde{q}_2,\alpha_2 h) \cdot \delta \tilde{q}_1,	\\
		(\tilde{\bm{\gamma}}_2,S(\tilde{\bm{\Pi}}_2)) \cdot_S(\delta \tilde{\bm{x}}_2,\tilde{\eta}_2 ) &= D_2L_d(\tilde{q}_1,\tilde{q}_2,\alpha_2 h) \cdot \delta \tilde{q}_2.
	\end{align*}
	Therefore, we have $\tilde{F}_{L_d}^{\text{Imp+}}[\lambda][\alpha_2 h]$ in Table \ref{tab:LGVCI_F_tilde_Ld}. Note that 
	$
		S(\tilde{\bm{\chi}}_1) = \asym \left( \tilde{R}_1^T\frac{\partial \tilde{\Phi}_1}{\partial R}\right).
	$
	Again, $\lambda$ is solved for first by using $\tilde{F}_{\jump}$ from Corollary \ref{cor:jumpCondsH}, and $\alpha_2 \in (0,1-\alpha_{\Sigma_1})$ is subsequently determined using the bisection method. As a result, $(\tilde{\bm{x}}_2,\tilde{R}_2,\tilde{\bm{\gamma}}_2,\tilde{\bm{\Pi}}_2)$ can be calculated using the middle five equations.
	
	Now, we use $\tilde{F}_{L_d}^{\text{Imp+}}[\lambda][\alpha_k h]$ to determine the next set of impact states at time $\tilde{t}_k$ for $k = 3, \ldots, l$. After determining the previous collision states, we can find the next states $(\bm{x}_i,R_{i},\bm{\gamma}_{i},\bm{\Pi}_{i})$ using $\tilde{F}_{L_d}^i[(1-\alpha_{\Sigma_l})h][\lambda]$. Lastly, the discrete Hamiltonian map $\tilde{F}_{L_d}[h]$ is used to determine $(\bm{x}_{i+1},R_{i+1},\bm{\gamma}_{i+1},\bm{\Pi}_{i+1})$ away from the last point of impact.
	
	\begin{sidewaystable}
		\begin{subtable}[t]{0.515 \textwidth}
		\captionsetup{singlelinecheck=false}
		\caption{\textbf{Lagrangian Forms}}
		\label{tab:LGVCI_F_Ld}
		\centering
		
			\begin{myframedeq}
			$F_{L_d}[h,h] : (\bm{x}_{k-1},R_{k-1},\bm{x}_k,R_k) \mapsto (\bm{x}_k,R_k,\bm{x}_{k+1},R_{k+1})$
			\begin{subequations}
			\label{eqn:discreteLM0}
				\begin{align*}
						0 & =\tfrac{m}{h}(\bm{x}_{k+1} - 2 \bm{x}_{k} + \bm{x}_{k-1}) + mgh\bm{e_3},\\
						0 & = \tfrac{1}{h} \asym(J_d F_{k-1} - F_k J_d), 
							\qquad  R_{k+1}  = R_k F_k.
				\end{align*}
			\end{subequations}
			\end{myframedeq}
			
			\begin{myframedeq}
			${F_{L_d}^\text{Imp}}[h,\alpha h] : (\bm{x}_{i-2},R_{i-2},\bm{x}_{i-1},R_{i-1}) \mapsto (\bm{x}_{i-1},R_{i-1},\tilde{\bm{x}},\tilde{R})$
			\begin{subequations}
			\label{eqn:discreteLMeq1}
				\begin{align*}
					0 &= \tfrac{m}{h}(\bm{x}_{i-1} - \bm{x}_{i-2}) - \tfrac{m}{\alpha h}(\tilde{\bm{x}} - \bm{x}_{i-1}) - \tfrac{mg(1+\alpha)h}{2}\bm{e_3}, \\
					0 &= \asym\left( \tfrac{1}{h} J_d F_{i-2} - \tfrac{1}{\alpha h}F_{i-1} J_d \right), \\
					 \tilde{R} &= R_{i-1} F_{i-1},
							\qquad 0 = \Phi(\tilde{\bm{x}},\tilde{R}).
				\end{align*}
			\end{subequations}
			\end{myframedeq}
			
			\begin{myframedeq}
			$F_{L_d}^i[\alpha h, (1-\alpha)h][\lambda] : (\bm{x}_{i-1},R_{i-1},\tilde{\bm{x}},\tilde{R}) \mapsto (\tilde{\bm{x}},\tilde{R},\bm{x}_{i},R_{i})$
			\begin{subequations}
			\label{eqn:discreteLM2s}
				\begin{align*}
					0 &= E_d(\tilde{q},q_i,(1-\alpha)h) - E_d(q_{i-1},\tilde{q},\alpha h),  \\
					0 &= \tfrac{m}{\alpha h}(\tilde{\bm{x}} - \bm{x}_{i-1}) - \tfrac{m}{(1-\alpha) h}(\bm{x}_i - \tilde{\bm{x}}) - \tfrac{mgh}{2}\bm{e_3} + \lambda \tfrac{\partial \tilde{\Phi}}{\partial \bm{x}}, \\
					0 &= \asym\left( \tfrac{1}{\alpha h} J_d F_{i-1} - \tfrac{1}{(1-\alpha) h}\tilde{F}J_d\right) + \lambda \asym\left(\tilde{R}^T \tfrac{\partial \tilde{\Phi}}{\partial R}\right),  \\
					R_i &= \tilde{R} \tilde{F}.
				\end{align*}
			\end{subequations}
			\end{myframedeq}
			
			\begin{myframedeq}
			$F_{L_d}^{i+1}[(1-\alpha)h,h] : (\tilde{\bm{x}},\tilde{R},\bm{x}_{i},R_{i}) \mapsto (\bm{x}_{i},R_{i},\bm{x}_{i+1},R_{i+1})$
			\begin{subequations}
			\label{eqn:discreteLM3s}
				\begin{align*}
					0 &= \tfrac{m}{(1-\alpha) h}(\bm{x}_{i}- \tilde{\bm{x}}) - \tfrac{m}{h}(\bm{x}_{i+1} - \bm{x}_{i}) - \tfrac{mg(2-\alpha)h}{2} \bm{e_3}, \\
					0 &= \asym\left( \tfrac{1}{(1-\alpha)h} J_d \tilde{F} - \tfrac{1}{h}F_i J_d \right), 
						\qquad R_{i+1} = R_i F_i.
				\end{align*}
			\end{subequations}
			\end{myframedeq}
			
			\begin{myframedeq}
			$F_{L_d}^{\text{Imp} +}[\lambda][\alpha_1 h, \alpha_2 h] : (\bm{x}_{i-1},R_{i-1},\tilde{\bm{x}}_{1},\tilde{R}_{1}) \mapsto (\tilde{\bm{x}}_{1},\tilde{R}_{1},\tilde{\bm{x}}_{2},\tilde{R}_{2})$
			\begin{subequations}
			\label{eqn:discreteLM2m}
				\begin{align*}
					0 &= E_d(\tilde{q}_1,\tilde{q}_2,\alpha_2 h) - E_d(q_{i-1},\tilde{q}_1,\alpha_1 h), \\
					0 &= \tfrac{m}{\alpha_1 h}(\tilde{\bm{x}}_1 - \bm{x}_{i-1}) - \tfrac{m}{\alpha_2 h}(\tilde{\bm{x}}_2 - \tilde{\bm{x}}_1) - \tfrac{mg(\alpha_1 + \alpha_2)}{2}\bm{e_3} +  \lambda \tfrac{\partial \tilde{\Phi}_1}{\partial \bm{x}}, \\
					0 &= \asym\left( \tfrac{1}{\alpha_1 h} J_d F_{i-1} - \tfrac{1}{\alpha_2h}\tilde{F}_1J_d \right) + \lambda \asym \left( \tilde{R}_1^T\tfrac{\partial \tilde{\Phi}_1}{\partial R}\right), \\
					\tilde{R}_2 &= \tilde{R}_1 \tilde{F}_1, 
						\qquad 0 = \Phi(\bm{x}_2, \tilde{R}_2).
				\end{align*}
			\end{subequations}
			\end{myframedeq}

		\end{subtable}
		\hfill
		\begin{subtable}[t]{0.475 \textwidth}
		\captionsetup{singlelinecheck=false}
		\caption{\textbf{Hamiltonian Forms}}
		\label{tab:LGVCI_F_tilde_Ld}
		\centering
		
			\begin{myframedeq}
			$\tilde{F}_{L_d}[h] : (\bm{x}_k,R_k,\bm{\gamma}_k,\bm{\Pi}_k) \mapsto (\bm{x}_{k+1},R_{k+1},\bm{\gamma}_{k+1},\bm{\Pi}_{k+1})$
			\begin{subequations}
			\label{eqn:discreteHM0}
				\begin{align*}
					\bm{x}_{k+1} &= \bm{x}_k + \tfrac{h}{m}\bm{\gamma}_k -\tfrac{gh^2}{2}\bm{e_3}, 
						& \bm{\Pi}_{k+1} &= F_k^T \bm{\Pi}_k, \\
					\bm{\gamma}_{k+1} &= \bm{\gamma}_k - mgh\bm{e_3}, 
						& R_{k+1} &= R_kF_k. \\
					S(\bm{\Pi}_k) &= \tfrac{1}{h}\asym(F_k J_d),
				\end{align*}
			\end{subequations}
			\end{myframedeq}
			
			\begin{myframedeq}
			$\tilde{F}_{L_d}^\text{Imp}[\alpha h] : (\bm{x}_{i-1},R_{i-1},\bm{\gamma}_{i-1},\bm{\Pi}_{i-1}) \mapsto (\tilde{\bm{x}},\tilde{R},\tilde{\bm{\gamma}},\tilde{\bm{\Pi}})$
			\begin{subequations}
			\label{eqn:discreteHM1}
				\begin{align*}
					\tilde{\bm{x}} &= \bm{x}_{i-1} + \tfrac{\alpha h}{m}\bm{\gamma}_{i-1} -\tfrac{g \alpha^2 h^2}{2}\bm{e_3} ,
						& \tilde{\bm{\Pi}} &= F_{i-1}^T \bm{\Pi}_{i-1}, \\
					\tilde{\bm{\gamma}} &= \bm{\gamma}_{i-1} - mg\alpha h\bm{e_3}, 
						& \tilde{R} &= R_{i-1}F_{i-1}, \\
					S(\bm{\Pi}_{i-1}) &= \tfrac{1}{\alpha h}\asym(F_{i-1} J_d),
						& 0 &= \Phi(\tilde{\bm{x}},\tilde{R}).
				\end{align*}
			\end{subequations}
			\end{myframedeq}
			
			\begin{myframedeq}
			$\tilde{F}^i_{L_{d}}[\alpha h][\lambda] : (\tilde{\bm{x}},\tilde{R},\tilde{\bm{\gamma}},\tilde{\bm{\Pi}}) \mapsto (\bm{x}_i,R_{i},\bm{\gamma}_{i},\bm{\Pi}_{i})$
			\begin{subequations}
			\label{eqn:discreteHM2s}
				\begin{align*}
					0 &= E_d(\tilde{q},q_i,(1-\alpha)h) - E_d(q_{i-1},\tilde{q},\alpha h), \\
					\bm{x}_i &= \tilde{\bm{x}} + \tfrac{(1-\alpha) h}{m}\tilde{\bm{\gamma}} -\tfrac{g (1-\alpha)^2 h^2}{2}\bm{e_3} + \lambda \tfrac{(1-\alpha)h}{m} \tfrac{\partial \tilde{\Phi}}{\partial \bm{x}}, \\
					\bm{\gamma}_{i} &= \tilde{\bm{\gamma}} - mg(1-\alpha) h\bm{e_3} + \lambda \tfrac{\partial \tilde{\Phi}}{\partial \bm{x}}, \\
					S(\tilde{\bm{\Pi}}) &= \asym\left(\tfrac{1}{(1-\alpha) h}\tilde{F}J_d \right) - \lambda \asym \left( \tilde{R}^T \tfrac{\partial \tilde{\Phi}}{\partial R}\right), \\
				\tilde{F}\bm{\Pi}_i &= \tilde{\bm{\Pi}}	+ \lambda\tilde{\bm{\chi}},
					\qquad R_i = \tilde{R}\tilde{F}.
				\end{align*}
			\end{subequations}
			\end{myframedeq}

			\begin{myframedeq}
			$\tilde{F}_{L_d}^{\text{Imp+}}[\lambda][\alpha_2 h] : (\tilde{\bm{x}}_1,\tilde{R}_1,\tilde{\bm{\gamma}}_1,\tilde{\bm{\Pi}}_1) \mapsto (\tilde{\bm{x}}_2,\tilde{R}_2,\tilde{\bm{\gamma}}_2,\tilde{\bm{\Pi}}_2) $
			\begin{subequations}
			\label{eqn:discreteHM2m}
				\begin{align*}
					0 &= E_d(\tilde{q}_1,\tilde{q}_2,\alpha_2 h) - E_d(q_{i-1},\tilde{q}_1,\alpha_1 h), \\
					\tilde{\bm{x}}_2 &= \tilde{\bm{x}}_1 + \tfrac{\alpha_2 h}{m}\tilde{\bm{\gamma}}_1 -\tfrac{g \alpha_2^2 h^2}{2}\bm{e_3} + \lambda \tfrac{\alpha_2 h}{m} \tfrac{\partial \tilde{\Phi}_1}{\partial \bm{x}}, \\
					\tilde{\bm{\gamma}}_2 &= \tilde{\bm{\gamma}}_1 - mg\alpha_2 h\bm{e_3} + \lambda \tfrac{\partial \tilde{\Phi}_1}{\partial \bm{x}}, \\
					S(\tilde{\bm{\Pi}}_1) &= \asym\left(\tfrac{1}{\alpha_2 h}\tilde{F}_1 J_d \right)  - \lambda \asym \left( \tilde{R}_1^T\tfrac{\partial \tilde{\Phi}_1}{\partial R}\right), \\
					\tilde{F}_1\tilde{\bm{\Pi}}_2 &= \tilde{\bm{\Pi}}_1	+ \lambda \tilde{\bm{\chi}}_1, 
						\qquad \tilde{R}_2  = \tilde{R}_1\tilde{F}_1,
							\qquad 0 = \Phi(\bm{x}_2, \tilde{R}_2).
				\end{align*}
			\end{subequations}
			\end{myframedeq}
			
		\end{subtable}
	\caption{Variational integrators in Lagrangian and Hamiltonian Forms}
	\label{tab:variationalIntegrators}
	\end{sidewaystable}
	
	\subsection{The Algorithm and the Zeno Phenomenon: A Summary}
	\label{sect:Algorithm}
	
	The collection of integrators discuss previously shall be called \textit{Lie group variational collision integrators} (LGVCI), which have been written in both Lagrangian and Hamiltonian forms. In this section, we provide two algorithms that summarize the LGVCI in the Hamiltonian form: The first algorithm recalls all of the necessary integrators in Table \ref{tab:LGVCI_F_tilde_Ld}. The second algorithm is streamlined by utilizing only the discrete Hamiltonian map $\tilde{F}_{L_d}$, the bisection method for $\alpha_j$, and the jump map $\tilde{F}_{\jump}$. We only provide the summary on the Hamiltonian side because it is common to describe hybrid systems in this way. However, it is also straightforward to write the algorithms on the Lagrangian side by imitating the ones presented here.
	
		\subsubsection{Algorithm 1}
		\label{sect:Algorithm1}
	Let us consider Table \ref{alg:algLong} (Algorithm 5.1): For the inputs, we have the initial states, $(\bm{x}_{\text{init}}, R_{\text{init}}, \bm{\gamma}_{\text{init}}, \bm{\Pi}_{\text{init}})$, and the number of discrete timesteps to be taken, $M$. The algorithm returns $(t_i^j,\bm{x}_i^j,R_i^j,\bm{\gamma}_i^j,\bm{\Pi}_i^j)$ for $0 \leq i \leq M$. This is the set of all the discrete states $(\bm{x}_i^j,R_i^j,\bm{\gamma}_i^j,\bm{\Pi}_i^j)$ occurring at time $t_i^j$. In particular, $t_i^0 = ih$, where $h$ is the fixed timestep. In the case where impacts occur within the interval $[t_i^0,t_{i+1}^0]$, $t_i^j$ represents the times of the collisions, where $1 \leq j \leq c_i$ and $c_i$ is number of collisions in the interval. The following parameters are required for the algorithm:
	\begin{itemize}
		\item $m$ is the mass of the rigid-body.
		\item $g$ is the gravitational acceleration.
		\item $J$ is the standard inertia matrix.
		\item $I_\epsilon$ is the diagonal matrix defined in \eqref{eqn:fE} for the ellipsoid.
		\item $\bm{n}$ is the normal vector of the plane $\PP$.
		\item $\epsilon_{\text{tol}}$ is the tolerance for bisection and Newton's method (see Section 3.4 in \cite{{Lee2007orbital}}).
	\end{itemize}
	
	The algorithm has the following brief structure: Variables are initialized, and the initial distance between the ellipsoid and plane is stored in $\dist$, which is our indicator for collision and/or interpenetration. Next, a for-loop for $1 \leq i \leq M$ has statements falling into one of the three conditions: $\dist > 0$, $\dist < 0$, and $\dist = 0$. The first condition uses $\tilde{F}_{L_d}[h]$ to determine states away from the point of impact. The second condition discards the states with interpenetration and determine the states of the first impact using $\tilde{F}_{L_d}^{\imp}[\alpha_j h]$. The last condition takes care of the cases where there could be multiple impacts using $\tilde{F}_{L_d}^{\imp +}[\lambda][\alpha_j h]$ or single impact using $\tilde{F}_{L_d}^{i}[(1-\alpha_{\tot})h][\lambda]$.
	
	Note that it is possible to encounter the Zeno phenomenon in our hybrid systems due to the form of the ellipsoid and other factors; these include the sectional curvature at the point of impact and the relative magnitude of the rotational and translational energies. As a result, a footnote to indicate where to include exception handling for Zeno phenomenon is made in the algorithm. The Zeno phenomenon would manifest itself in the algorithm by arbitrarily large values of $j$ in the while-loop, and $\alpha_j$ that are all approximately machine precision. To remedy this, we recommend either breaking out of all the loops or returning the outputs when $j$ exceeds a user-defined threshold.
	 
		\subsubsection{Algorithm 2}
		\label{sect:Algorithm2}
	We present a more streamlined algorithm, which only uses $\tilde{F}_{L_d}$, the bisection method, and $\tilde{F}_{\jump}$. This is based on the observation that the other discrete Hamiltonian maps are equivalent to some combination of these three subroutines. Namely, $\tilde{F}_{L_d}^{\imp}[\alpha_j h] = \tilde{F}_{L_d}[\alpha_j h]$ after $\alpha_j \in (0,1-\alpha_{\tot})$ is determined using bisection method. There is also $\tilde{F}_{L_d}^{i}[(1-\alpha_{\tot})h][\lambda] = \tilde{F}_{L_d}[(1-\alpha_{\tot})h]$ after $\lambda$ is determined and the instantaneous states before impact are updated with the momenta after the impact from $\tilde{F}_{\jump}$. Finally, $\tilde{F}_{L_d}^{\imp +}[\lambda][\alpha_j h] = \tilde{F}_{L_d}[\alpha_j h]$ following both the updates from $\tilde{F}_{\jump}$ and bisection method for $\alpha_j$. Consequently, we have the more streamlined Table \ref{alg:algShort} (Algorithm 5.2) with the same parameters, inputs, and return.
	
	\begin{sidewaystable}
	\begin{subtable}[t]{0.5 \textwidth}
	\captionsetup{singlelinecheck=false}
	\caption{\textbf{Algorithm 5.1}}
	\label{alg:algLong}
	\centering
	\begin{algorithmic} 
		\STATE \textbf{input:} $(\bm{x},R,\bm{\gamma},\bm{\Pi})_\text{init}, M$
		\STATE $i= j=0; \quad \alpha_j= \alpha_{\tot}=0;$
		\STATE $(t,\bm{x},R,\bm{\gamma},\bm{\Pi})_i^j \gets (0,\bm{x},R,\bm{\gamma},\bm{\Pi})_\text{init}; \quad \dist \gets \Phi(\bm{x},R)_i^j;$
		\FOR{$i = 1 : M$}
			\IF{$\dist > 0$}
				\STATE $j=0; \quad t_i^j = t_{i-1}^j + h;$
				\STATE $(\bm{x},R,\bm{\gamma},\bm{\Pi})_i^j \gets \tilde{F}_{L_d}[h](\bm{x},R,\bm{\gamma},\bm{\Pi})_{i-1}^j; \dist \gets \Phi(\bm{x},R)_i^j;$
			\ELSIF{$\dist < 0$}
				\STATE $i \minuseq 1; \quad j\pluseq 1;$
				\STATE $(\bm{x},R,\bm{\gamma},\bm{\Pi})_i^j \gets \tilde{F}_{L_d}^{\imp}[\alpha_j h]  (\bm{x},R,\bm{\gamma},\bm{\Pi})_{i}^{j-1};$
				\STATE $\dist \gets \Phi(\bm{x},R)_i^j; \quad t_i^j = t_i^{j-1} + \alpha_j; \quad \alpha_{\tot} \pluseq \alpha_j;$
			\ELSE 
				\STATE $i \minuseq 1;$
				\WHILE{$\dist \leq 0$}
					\IF{$\dist = 0$}
						 \STATE $(\bm{x},R,\bm{\gamma},\bm{\Pi})_{\tmp} \gets \tilde{F}_{L_d}^{i}[(1-\alpha_{\tot})h][\lambda](\bm{x},R,\bm{\gamma},\bm{\Pi})_{i}^j;$
						 \STATE $\dist \gets \Phi(\bm{x},R)_{\tmp}$
						 \IF{$\dist > 0$}
							\STATE $i \pluseq 1; \hspace{2ex} j = 0; \hspace{2ex} \alpha_j = \alpha_{\tot}=0; \hspace{2ex} t_i^j = t_{i-1}^j + h;$
							\STATE $(\bm{x},R,\bm{\gamma},\bm{\Pi})_i^j \gets (\bm{x},R,\bm{\gamma},\bm{\Pi})_{\tmp}$;
						\ENDIF
					\ELSE
						\STATE $j \pluseq 1;^\dag$
						\STATE $(\bm{x},R,\bm{\gamma},\bm{\Pi})_i^j \gets \tilde{F}_{L_d}^{\text{Imp}+}[\lambda][\alpha_j h](\bm{x},R,\bm{\gamma},\bm{\Pi})_{i}^{j-1};$
						\STATE $\dist \gets \Phi(\bm{x},R)_i^j; \quad t_i^j = t_i^{j-1} + \alpha_j; \quad \alpha_{\tot} \pluseq \alpha_j;$
					\ENDIF
				\ENDWHILE			
			\ENDIF
		\ENDFOR
		\RETURN $(t_i^j,\bm{x}_i^j,R_i^j,\bm{\gamma}_i^j,\bm{\Pi}_i^j)$
	\end{algorithmic}
	\end{subtable}
	\rulesep
	\begin{subtable}[t]{0.49 \textwidth}
	\captionsetup{singlelinecheck=false}
	\caption{\textbf{Algorithm 5.2}}
	\label{alg:algShort}
	\centering
	\begin{algorithmic} 
		\STATE \textbf{input:} $(\bm{x},R,\bm{\gamma},\bm{\Pi})_\text{init}, M$
		\STATE $i= j=0; \quad \alpha_j= \alpha_{\tot}=0;$
		\STATE $(t,\bm{x},R,\bm{\gamma},\bm{\Pi})_i^j \gets (0,\bm{x},R,\bm{\gamma},\bm{\Pi})_\text{init}; \quad \dist \gets \Phi(\bm{x},R)_i^j;$
		\FOR{$i = 1 : M$}
			\IF{$\dist \geq 0$}
				\IF{$\dist > 0$}
					\STATE $j=0; \quad t_i^j = t_{i-1}^j + h;$
					\STATE $(\bm{x},R,\bm{\gamma},\bm{\Pi})_i^j \gets \tilde{F}_{L_d}[h](\bm{x},R,\bm{\gamma},\bm{\Pi})_{i-1}^j;$
					\STATE $\dist \gets \Phi(\bm{x},R)_i^j;$
				\ELSE
					\STATE $i \minuseq 1;$
					\STATE $(\lambda,\bm{\gamma},\bm{\Pi})_i^j \gets \tilde{F}_{\jump}(\bm{x},R,\bm{\gamma},\bm{\Pi})_{i}^{j};$
					\STATE $(\bm{x},R,\bm{\gamma},\bm{\Pi})_{\tmp} \gets \tilde{F}_{L_d}[(1-\alpha_{\tot})h](\bm{x},R,\bm{\gamma},\bm{\Pi})_i^j;$
					\STATE $\dist \gets \Phi(\bm{x},R)_{\tmp};$
					\IF{$\dist > 0$}
						\STATE $i \pluseq 1; \quad j = 0; \quad \alpha_j=\alpha_{\tot}=0; \quad t_i^j = t_{i-1}^j + h$
						\STATE $(\bm{x},R,\bm{\gamma},\bm{\Pi})_i^j \gets (\bm{x},R,\bm{\gamma},\bm{\Pi})_{\tmp};$
					\ENDIF
				\ENDIF
			\ELSE
				\STATE $i \minuseq 1; \quad j \pluseq 1;^\dag \quad \alpha_j \gets \text{Bisection}(0,1-\alpha_{\tot});$
				\STATE $(\bm{x},R,\bm{\gamma},\bm{\Pi})_i^j \gets \tilde{F}_{L_d}[\alpha_j h] (\bm{x},R,\bm{\gamma},\bm{\Pi})_{i}^{j-1};$
				\STATE $\dist \gets \Phi(\bm{x},R)_i^j; \quad t_i^j = t_i^{j-1} + \alpha_j;\quad \alpha_{\tot} \pluseq \alpha_j;$
			\ENDIF
		\ENDFOR
		\RETURN $(t_i^j,\bm{x}_i^j,R_i^j,\bm{\gamma}_i^j,\bm{\Pi}_i^j)$
	\end{algorithmic}
	\end{subtable}
	\label{fig:alg}
	\caption{LGVCI algorithms. $^\dag$Zeno phenomenon: $j$ is arbitrarily large, so break/return is recommended.}
	\end{sidewaystable}
	
\section{Extensions: Tilted Planes and Other Rigid-Bodies}
\label{sect:Extensions}
	We extend our problem of the bouncing ellipsoid by considering tilted planes and/or other rigid-bodies. This is a natural next step because we want to describe the collision dynamics of different hybrid systems. Note that the theories discussed in both the continuous and discrete cases remain the same with the exception of the collision detection function $\Phi$. Furthermore, it is sufficient that the partial derivatives exists for all ``probable'' configurations of impact, which will be discussed further in this section. Hence, the different collision detection functions and their partial derivatives are discussed here for different hybrid systems.
	
	General planes are defined as $\PP(\tilde{\bm{n}},D) = \{ \bm{z} \in \R^3 \mid \tilde{\bm{n}}^T \bm{z} + D = 0 \}$ for some $\tilde{\bm{n}} \in S^2$ and $D \in \R$. We define the signed distance function for an arbitrary half-plane 
	\begin{equation}
	\label{eqn:halfPlane}
		\mathcal{HP}(\tilde{\bm{n}},D) = \{ \bm{z} \in \R \mid \tilde{\bm{n}}^T\bm{z} + D \leq 0\},
	\end{equation}
	which is the region opposite from the direction of the normal vector $\tilde{\bm{n}}$ of the plane: 
	\begin{proposition}
	\label{prop:SDFhalfPlane}
	The signed distance function $\psi_{\PP} : \R^3 \to \R $ for the half plane $\mathcal{HP}(\tilde{\bm{n}},D) $ is defined by $\psi_{\PP}(\bm{z}) = \tilde{\bm{n}}^T\bm{z} + D $.
	\end{proposition}
	
	\subsection{Tilted Planes}
	\label{sect:TiltedPlanes}
	Recall that our system describing the bouncing ellipsoid involves the horizontal plane denoted as $\PP = \{ \bm{z} \in \R^3 \mid \bm{n}^T \bm{z} = 0 \}$ where $ \bm{n} = (0,0,1)^T$. For tilted planes, it suffices to consider planes with a normal vector
	$
		\tilde{\bm{n}} \in S^2_+ =  \{ \bm{z} \in \R^3 \mid \| \bm{z} \| = 1 \text{ and } z_3 > 0 \}
	$
	that pass through the origin because shifted planes with the same normal vector $\tilde{\bm{n}}$ are equivariant with respect to translations in the gravitational direction. Furthermore, we require $\tilde{n}_3 > 0$ so that the orientation is preserved. The case $\tilde{\bm{n}}=(0,0,1)^T \in S^2_+$ gives the horizontal plane in our original discussion.
	
	Consider the tilted plane
	$
		\PP_{\tilde{\bm{n}}} = \{ \bm{z} \in \R^3 \mid \tilde{\bm{n}}^T \bm{z} = 0 \text \}.
	$
	Now, suppose $(\bm{x},R) \in SE(3)$ so that the arbitrary ellipsoid, $\EE' = T_{(\bm{x},R)}(\EE)$, is above the tilted plane; the collision detection function is written as
	\begin{equation}
	\label{eqn:distTiltedPlane}
		d_2(\EE',\PP_{\tilde{\bm{n}}}) = \Phi(\bm{x},R) = \tilde{\bm{n}}^T \bm{x} -  \|I_\EE R^T \tilde{\bm{n}} \|.
	\end{equation}
	Then, $\EE' \cap \PP_{\tilde{\bm{n}}} = \emptyset$ if and only if $\tilde{\bm{n}}^T \bm{x} > \|I_\EE R^T \tilde{\bm{n}} \|$. The partial derivatives are computed in the same way as before:
	\begin{equation}
	\label{eqn:distTiltedPlane_partials}
		\left( \frac{\partial \Phi}{\partial \bm{x}}, \frac{\partial \Phi}{\partial R} \right) = \left( \tilde{\bm{n}}, -\frac{\tilde{\bm{n}}\tilde{\bm{n}}^T R I_\epsilon^2}{\| I_\epsilon R^T \tilde{\bm{n}} \|}\right).
	\end{equation}
	
	\subsection{Convex Rigid-Bodies and Interface Implicit Representations}
	\label{sect:ConvexRigidBodies}
	
	We propose a way to construct the collision detection function $\Phi$ for some convex domains. 
	
	Suppose the domain of the rigid-body $\BB  \subset \R^3$ is convex and compact, and $\rho : \BB \to \R$ is the mass density function such that the center of mass of $\BB $ coincides with the origin. The function also provides the respective standard and nonstandard inertia matrix $J$ and $J_d$ for our numerical implementation. Now, let the \textit{interface} $\partial \BB $ be $C^k$ where $k \geq 2$. This interface divides $\R^3$ into two separate regions where the interior $\BB^\mathrm{o}$ is the \textit{inside} and the complement $\BB^\mathrm{c}$ is the \textit{outside} of the domain. In addition, the assumption on smoothness guarantees that there are no \textit{edges} -- a curve where two surfaces meet -- and no \textit{vertices}, points where two or more edges meet; also, it guarantees that the partial derivatives of $\Phi$ will be continuous. 
	
	Now, the interface can be represented in two ways: In an \textit{explicit} representation, one explicitly writes all the points that belong to the interface. On the other hand, \textit{implicit} representation of the interface is given by the zero level set of some implicit function $\phi$ after a constant shift if necessary \citep{osher2003signed}. In fact, the \textit{signed distance functions} (SDFs) $\psi : \R^3 \to \R$ are a subset of such implicit functions defined by
	\begin{equation}
	\label{eqn:sdf}
		\psi(\bm{z}) =
		\left\{
			\begin{array}{ll}
				d_2(\bm{z},\partial \BB),	&	\bm{z} \in \BB ^\mathrm{c}\\
				-d_2(\bm{z},\partial \BB),	&	\bm{z}\in \BB ,
			\end{array}
		\right.
	\end{equation}
	but it also satisfies $\|\nabla \psi\| = 1$ everywhere except on the zero level set. We reserve $\psi$ for SDFs and $\phi$ as the general class of functions whose zero level set is $\partial \BB$. In particular, these functions are positive for $\bm{z}$ outside of the interface, negative for $\bm{z}$ inside of the interface, and zero otherwise. Of course, $|\psi(\bm{z})|$ is the distance from $\bm{z}$ to the interface. Lastly, note that
	$
		\nabla \psi(\bm{z}) = N(\bm{z}),
	$
	where $N$ is the outward normal vector field. Essentially, the collision detection function and its partial derivatives between the body and plane will be constructed using the SDF or some other implicit representation of the interface.
		\subsubsection{Theory: The Collision Detection}
		\label{sect:ConvexRigidBodies1}
	Suppose the hybrid system involves a convex rigid-body $\BB$ with $C^k$ boundary ($k\geq2$), and let the SDF $\psi$ or some implicit representation $\phi$ of the rigid-body be given. The collision detection problem is described first in the body-fixed frame: Suppose $\PP_{\tilde{\bm{n}}} = \PP(\tilde{\bm{n}},0)$ is a plane containing the origin and $\BB' = T_{(\bm{x},R)}(\BB)$ is the translated, rotated body above the plane for some $(\bm{x},R) \in SE(3)$. Suppose $\BB' \cap \PP_{\tilde{\bm{n}}} = \emptyset$, and so the distance between them can be written equivalently as
	\[
		d_2(\BB',\PP_{\tilde{\bm{n}}}) = d_2(\BB, \PP_{\tilde{\bm{n}}}'),
	\]
	where
	$
		\PP_{\tilde{\bm{n}}}' 
		= \PP(R^T \tilde{\bm{n}}, \tilde{\bm{n}}^T \bm{x}) 
		= \{ \bm{z} \in \R^3 \mid \tilde{\bm{n}}^T R \bm{z} + \tilde{\bm{n}}^T\bm{x} = 0 \}
	$
	since $T_{(\bm{x},R)}$ is an isometry. This equivalent view of the distance is constructed from the body-fixed frame, so one can imagine a shifted and rotated plane $\PP_{\tilde{\bm{n}}}' = {T_{(\bm{x},R)}}^{-1}(\PP_{\tilde{\bm{n}}})$ in this frame. Furthermore, the construction is made in this frame because the body elements in $\BB$, and especially its boundary, are fixed; this allows us to easily construct $\Phi$ using a constrained optimization problem, where the constraint is $\bm{\rho} \in \partial \BB$.
	
	\begin{figure}
	\centering
		\begin{subfigure}[b]{0.45 \textwidth}
		\includegraphics{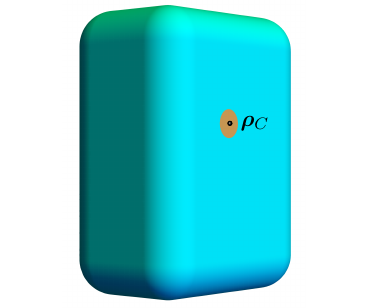}
			\caption{The neighborhood of $\bm{\rho}_C \in \partial \BB$ is locally flat where all the points are also closest to the plane.}
			\label{fig:exception1}
		\end{subfigure}
		\hspace{1 cm}
		\begin{subfigure}[b]{0.45 \textwidth}
		\includegraphics{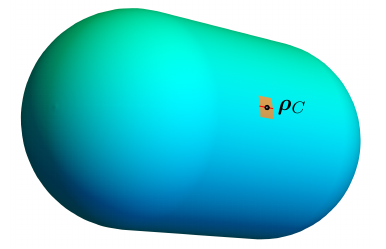}
			\caption{The neighborhood of $\bm{\rho}_C \in \partial \BB$ with a straight red curve where all the points are also closest to the plane.}
			\label{fig:exception2}
		\end{subfigure}
		\caption{Examples of neighborhood with non-unique closest points on convex bodies with $C^\infty$ boundary.}
		\label{fig:exceptions}
	\end{figure}
	
	Let us denote $\bm{\rho}_C \in \partial \BB$ as \textit{a closest point to the plane} $\PP_{\tilde{\bm{n}}}'$ in the body-fixed frame, meaning
	\[
		d_2(\bm{\rho}_C,\PP_{\tilde{\bm{n}}}') = d_2(\BB, \PP_{\tilde{\bm{n}}}').
	\]
	For most configurations $(\bm{x},R) \in SE(3)$, there is a unique $\bm{\rho}_C$ because $\BB$ is convex and has $C^k$ ($k\geq 2$) boundary. The exception occurs for configurations that have zero Gaussian curvature at the point $\bm{\rho}_C \in \partial \BB$, which can be categorized into two cases (see Figure \ref{fig:exceptions}): In the first case, both principal curvatures at $\bm{\rho}_C$ are zeros, and the neighborhood of $\bm{\rho}_C$ is a plane which is parallel to the $\PP_{\tilde{\bm{n}}}'$. This implies that all the points in this locally planar neighborhood is also closest to the plane (e.g., see Figure \ref{fig:exception1}). In the second case, one of the principal curvatures at $\bm{\rho}_C$ is zero; in particular, a neighborhood of $\bm{\rho}_C$ in $\partial \BB$ will have a curve with points which are all closest to the plane, and this curve is parallel to the plane as well, e.g., see Figure \ref{fig:exception2}. However, these cases are \textit{improbable} in numerical computations because the discrete configuration $(\bm{x},R)$ must be in a very particular and isolated state for the two cases to arise. Therefore, it is also improbable for these cases to arise at the point of impact, so we only consider the \textit{probable} configurations where $\bm{\rho}_C$ is unique. This notion of probable configurations is analogous to the notion of general position or genericity that arises in computational geometry and algebraic geometry.
	
	The improbable configurations are discussed here because we do not want to dismiss them entirely, since they are still possible configurations of hybrid systems. However, the LGVCI algorithm would not capture these configurations, which is actually a good representation of the hybrid systems in real life. In particular, the set of improbable configurations is a set of measure zero in all the configurations away from impacts. Furthermore, the improbable configurations will not occur during impact in our implementation as well, so $\BB \cap \PP_{\tilde{\bm{n}}}' = \{ \bm{\rho}_C \}$ at the configurations of impact.
	
	Given this unique point $\bm{\rho}_C$, the collision detection and its partial derivatives can now be discussed. Let $(\bm{x},R)$ be a probable configuration so that $\PP_{\tilde{\bm{n}}}'$ is given in the body-fixed frame. In theory, $\bm{\rho}_C$ is dependent on $(\bm{x},R)$ and can be determined since $\partial \BB$ is compact. Namely, $\bm{\rho}_C(\bm{x},R)$ is determined by the $\argmin$ of all possible distances from the boundary points to the plane:
	\begin{equation}
	\label{eqn:rhoC}
		\bm{\rho}_C(\bm{x},R) 
		= \argmin_{\bm{\rho} \in \partial \BB} \, \psi_{\PP_{\tilde{\bm{n}}}'}(\bm{\rho})	\\
		= \argmin_{\{\bm{\rho} \in \R^3 \mid \phi(\bm{\rho}) = 0\}} \tilde{\bm{n}}^T R \bm{\rho} + \tilde{\bm{n}}^T \bm{x},
	\end{equation}
	using the signed distance function for the plane defined in Proposition \ref{prop:SDFhalfPlane}. Also, recall that $\phi^{-1}(0) = \partial \BB$ for an implicit representation $\phi$, and this remains true if the representation is an SDF. As a result, the collision detection function is defined simply as 
	\begin{equation}
	\label{eqn:PhiConvex1}
		\Phi(\bm{x},R)
			= \min_{\{\bm{\rho} \in \R^3 \mid \phi(\bm{\rho}) = 0\}} \tilde{\bm{n}}^T R \bm{\rho} + \tilde{\bm{n}}^T \bm{x}	\\
			=  \tilde{\bm{n}}^T R \bm{\rho}_C(\bm{x},R) + \tilde{\bm{n}}^T \bm{x}.	
	\end{equation}
	Indeed, this is desirable because this function also encodes interpenetration. Namely, if the plane intersects the body with interpenetration for a given configuration $(\bm{x},R) \in SE(3)$, then $\tilde{\bm{n}}^T R \bm{\rho} + \tilde{\bm{n}}^T \bm{x} < 0$ for some $\bm{\rho} \in \partial \BB$, and so $\Phi(\bm{x},R) < 0$. In addition, $\Phi(\bm{x},R) < 0$ for configurations where the body completely passes through the plane because all the boundary points would be in the interior of the half-plane $\mathcal{H}\PP_{\tilde{\bm{n}}}'$ defined in \eqref{eqn:halfPlane}.
	
	Finally, $\bm{\rho}_C(\bm{x},R) \in \partial \BB$ can be solved using constrained optimization in  \eqref{eqn:PhiConvex1}. Recall that possible solutions to the problem are the critical points for the \textit{Lagrangian function}
	\[
		\mathcal{L}(\bm{x},R;\bm{\rho}) = \tilde{\bm{n}}^T R \bm{\rho} + \tilde{\bm{n}}^T \bm{x} + \lambda \phi(\bm{\rho}),
	\]
	where $\lambda$ is the Lagrange multiplier. Observe that $\frac{\partial \mathcal{L}}{\partial \bm{\rho}} = \bm{0}$ implies that $\lambda \nabla \phi(\bm{\rho}) = -R^T \tilde{\bm{n}}$. Hence we have the following remark:
	\begin{remark}
	\label{rem:tangentplaneRhoC}
		Given a probable configuration $(\bm{x},R) \in SE(3)$,  $\bm{\rho}_C \in \partial \BB$ is unique and $\bm{z} = \bm{\rho}_C$ is a solution to $\nabla \psi(\bm{z}) = \pm R^T \tilde{\bm{n}}$ or $\lambda \nabla \phi(\bm{z}) = -R^T \tilde{\bm{n}}$ for some $\lambda \in \R$.
	\end{remark}
	Intuitively, this means that the tangent plane at $\bm{\rho}_C \in \partial \BB$ is parallel to $\PP_{\tilde{\bm{n}}}'$ since they share the same normal vector up to a scalar multiplier. Furthermore, as a consequence of Remark \ref{rem:tangentplaneRhoC}, one might conclude that the dependencies should be changed, $\bm{\rho}_C(\bm{x},R) \longrightarrow \bm{\rho}_C(R)$.
		\subsubsection{Theory: The Partial Derivatives of \texorpdfstring{$\Phi$}{Phi}}
		\label{sect:ConvexRigidBodies2}
	We continue our discussion with the partial derivatives of the collision detection function. By the smoothness of the interface, its partial derivatives exist and are continuous. Specifically, they should exist for probable configurations at impact since the partial derivatives are computed during collision. Given the setup for the constrained optimization problem, we can simply compute the partial derivatives as
	\begin{equation}
	\label{eqn:partialsGen}
		\left( \frac{\partial \Phi}{\partial \bm{x}}, \biggl.\frac{\partial \Phi}{\partial R} \right)\biggr\lvert_{(\bm{x},R)} = \left( \tilde{\bm{n}}, \tilde{\bm{n}}\bm{\rho}_C^T +  \tilde{\bm{n}}^T R \frac{\partial \bm{\rho}_C}{\partial R} \right).
	\end{equation}
	Intuitively, $\frac{\partial \Phi}{\partial \bm{x}}$ makes sense because given a fixed $R$, the gradient of $\Phi$ with respect to $\bm{x}$, the position of the center of mass of the body, should point towards the direction of greatest increase for $\Phi$; this is, indeed, the normal vector of the plane $\tilde{\bm{n}}$. Hence, given any admissible configurations and configurations at impact, $\frac{\partial \Phi}{\partial \bm{x}} = \tilde{\bm{n}}$.
	
	We may also check this against the example of the ellipsoid. Again, using inversive geometry and/or constrained optimization, the closest point on the transformed ellipsoid to the plane $\PP_{\tilde{\bm{n}}}'$ is
	$
		\bm{\rho}_C(R) = - \frac{I_\epsilon^2 R^T \tilde{\bm{n}}}{\| I_\epsilon R^T \tilde{\bm{n}} \|}.
	$ 
	One can show that $\tilde{\bm{n}}^T R \frac{\partial \bm{\rho}_C}{\partial R} = 0$ and then get $\frac{\partial \Phi}{\partial R}(\bm{x},R) = \tilde{\bm{n}} \bm{\rho}_C(\bm{x},R)^T = - \frac{\tilde{\bm{n}} \tilde{\bm{n}}^T R I_\epsilon^2}{\| I_\epsilon R^T \tilde{\bm{n}} \|}$ for all admissible configurations and configurations at impact. Indeed, this was already derived in \eqref{eqn:distTiltedPlane_partials}.

	Lastly, $\frac{\partial \Phi}{\partial R}$ depends on the uniqueness of $\bm{\rho}_C$ which holds by assumption, since we have restricted ourselves to probable configurations. Suppose instead that we are given an improbable configuration $(\bm{x},R) \in SE(3)$ and $\bm{\rho}_C$ is not unique, and we decide to choose one of the closest points as a representation. Then, the partial derivative $\frac{\partial \Phi}{\partial R}$ will not be well-defined, and it will be different for each representation. This is unsurprising, since the directional derivative along any fixed rotation will be drastically different for the different elements of the closest point set. 	
	\subsection{Convex Polyhedra}
	\label{sect:ConvexPoly}
	
	In this section, we extend our theory for hybrid systems by considering rigid-bodies that are convex polyhedra. In particular, a convex polyhedron $\BB$ is defined by the convex hull of a collection of vertices $\{\bm{v}_j\}_{j=1}^l$ where $l \geq 4$.  Its centroid and moment of inertia may be computed using the Mirtich's algorithm in \cite{mi1996}. Otherwise, one may utilize built-in functions \texttt{RegionCentroid} and \texttt{MomentOfInertia} in Mathematica 12 to compute the centroid and inertia matrix, respectively. Of course, the centroid of the body can be made to coincide with the origin by shifting the vertices. 
	
	Note that $\BB$ has a $C^0$ interface, which consists of faces, edges, and vertices.  In Section \ref{sect:ConvexRigidBodies}, we discussed the improbable configurations where the closest point to the plane is not unique, and this case would arise when any one of the faces or edges of the polyhedron is closest and parallel to the plane. However again, these configurations remain improbable due to finite numerical precision and numerical roundoff. In fact, suppose one of the edges is closest and parallel to the plane; a small perturbation in the configuration of the body will leave one of the two vertices of the edge as the closest point to the plane. Similarly, if one of the faces is closest and parallel to the plane, a small perturbation will leave one of the vertices of the face as the closest point. As a result, the closest point is almost always a vertex, and it is unique; this also ensures that a vertex is always the singleton at the point of impact. 
	
	Despite this simplification, we face a challenge applying our jump conditions at the points of impact, which are sharp corners of the polyhedron. One possible approach is to replace our continuous equations of motion with \textit{differential inclusions}. This will yield a set of possible momenta that lie in the normal cone (in the sense of convex analysis) at the configuration of impact $(\bm{x}_i,R_i)$. To compute a realization, we would have to choose a momenta within the normal cone to determine the state after the collision, which adds an element of randomness to the simulation; this is undesirable. We avoid these issues altogether by considering a regularization of the rigid-body. Using this method, the modified convex polyhedron has a $C^\infty$ boundary.  Hence, its impact point on the plane will be a singleton, and the partial derivatives will exist.
	
	Let us begin by considering the SDF  $\psi$ for the convex polyhedron $\BB$, then the interface is given by the zero level set, i.e., $\psi^{-1}(0) = \partial \BB$. We modify the SDF, so that the zero level set has smoothness near the vertices, and this is done using \textit{$\epsilon$-rounding} where $\epsilon$ is a sufficiently small parameter: Define the new SDF $\psi_\epsilon : \R^3 \to \R$ by
	\begin{equation}
	\label{eqn:SDFepsilon}
		\psi_\epsilon(\bm{x}) = \psi(\bm{x}) - \epsilon.
	\end{equation}
	Then, the zero level set of $\psi_\epsilon$ is similar to $\BB$, but its surfaces and edges are $\epsilon$-distance further away from each respective surfaces and edges. Furthermore, the corners are now rounded with a radius of curvature bounded from below by $\epsilon$, for example in Figure \ref{fig:e-rounding}. Hence, define $\partial\BB^\epsilon = \psi_\epsilon^{-1}(0)$ as the new interface of interest, and let $\BB^\epsilon$ be the new rigid-body of interest defined by the convex hull of $\partial\BB^\epsilon$.
	
	\begin{figure}
	\centering
		\begin{subfigure}[b]{0.45 \textwidth}
			\includegraphics[width = .8\textwidth]{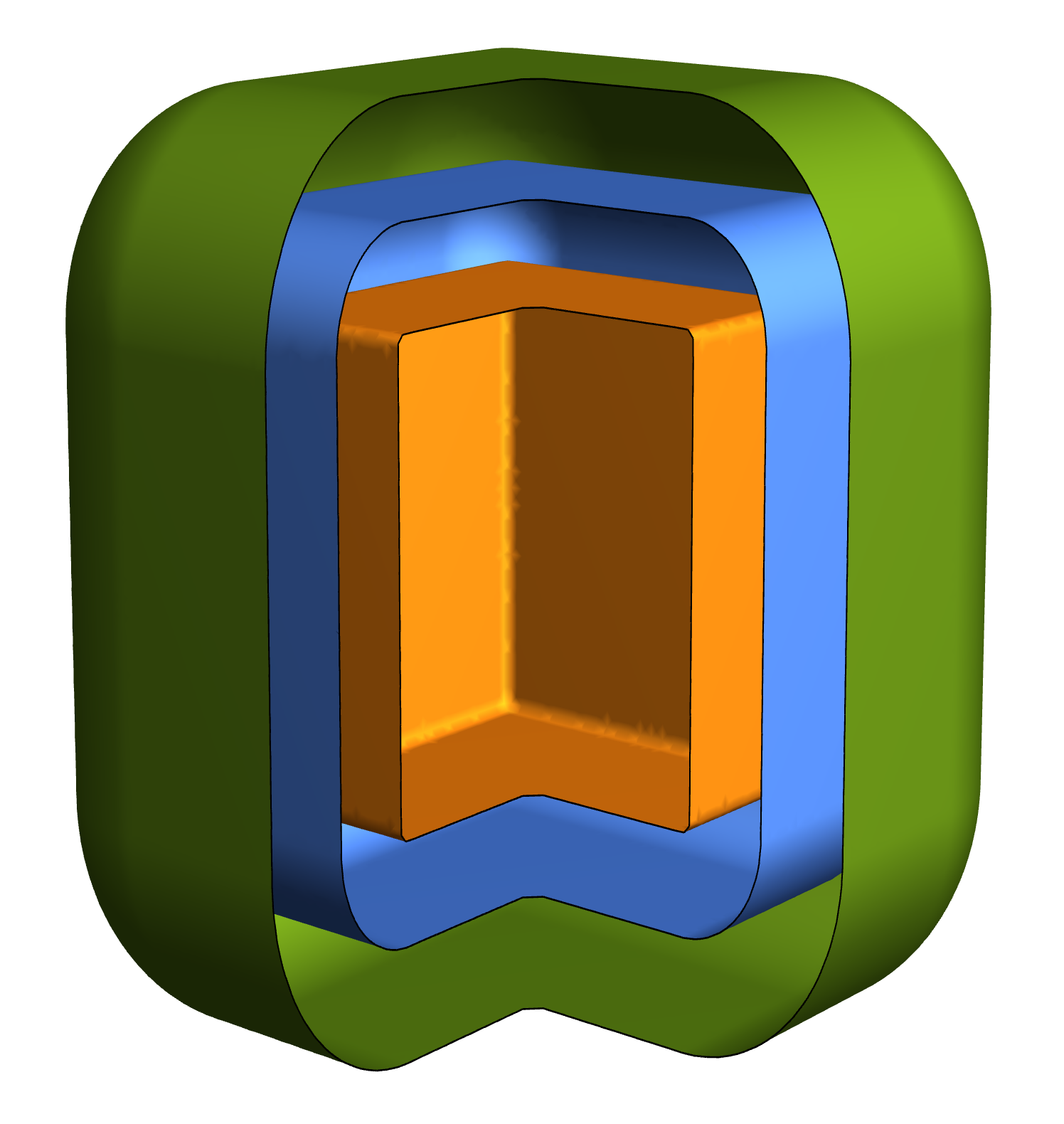}
			\caption{Examples of exaggerated $\epsilon$-rounding in blue and green of the orange cuboid interface.}
			\label{fig:e-rounding}
		\end{subfigure}
		\hspace{1 cm}
		\begin{subfigure}[b]{0.45 \textwidth}
			\includegraphics[width=0.9 \textwidth]{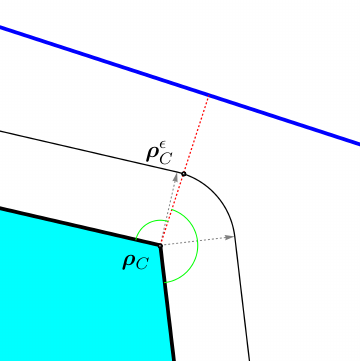}
			\caption{In 2D, $\bm{\rho}^\epsilon_C \in \partial \BB^\epsilon$ is always on the rounded corner forming obtuse angles in green.}
			\label{fig:roundedcorner}
		\end{subfigure}
		\caption{$\epsilon$-rounding figures}
		\label{fig:rounding}
	\end{figure}
	
	For a probable configuration $(\bm{x},R) \in SE(3)$, we are given the plane $\PP_{\tilde{\bm{n}}}'$ in the body-fixed frame. Now, the unique, closest point on the boundary $\partial \BB^\epsilon$ will always be on the rounded portion of a particular corner. This can be shown by first determining the closest point $\bm{\rho}_C \in \partial \BB$, which is one of the vertices $\{\bm{v}_j\}_{j=1}^l$. Then the closest point on $\partial \BB^\epsilon$ is $\epsilon$-distance along the normal vector towards the plane, i.e., 
	$
		\bm{\rho}^\epsilon_C (\bm{x},R) = \bm{\rho}_C(\bm{x},R) - \epsilon R^T \tilde{\bm{n}}.
	$
	In particular, this point $\bm{\rho}^\epsilon_C \in \partial \BB^\epsilon$, represented as a vector, always forms obtuse angles with all the surfaces and edges at the vertex $\bm{\rho}_C$ (see Figure \ref{fig:roundedcorner}). This means that $\bm{\rho}^\epsilon_C$ is unique and always on the rounded corner. 
	
	Finally, following the definition in \eqref{eqn:PhiConvex1}, define the collision detection function for the rounded convex polyhedron $\BB^\epsilon$ as
	\begin{equation}
	\label{eqn:PhiRoundedConvexPoly}
		\Phi(\bm{x},R) 
		=\tilde{\bm{n}}^T R \bm{\rho}^\epsilon_C(\bm{x},R) + \tilde{\bm{n}}^T \bm{x}
		=  \tilde{\bm{n}}^T R \bm{\rho}_C(\bm{x},R) + \tilde{\bm{n}}^T \bm{x} - \epsilon.
	\end{equation}
	Also, recall that $\bm{\rho}_C(\bm{x},R)$ is the vertex of the convex polyhedron $\BB$ that is closest to the plane. Then the partial derivatives are computed to be
	\begin{equation}
	\label{eqn:partialsRoundedConvexPoly}
		\left( \frac{\partial \Phi}{\partial \bm{x}}, \biggl.\frac{\partial \Phi}{\partial R} \right)\biggr\lvert_{(\bm{x},R)}
		= \left( \tilde{\bm{n}},  \tilde{\bm{n}} \bm{\rho}_C^T + \tilde{\bm{n}}^T R \frac{\partial \bm{\rho}_C}{\partial R}  - \epsilon \tilde{\bm{n}} \tilde{\bm{n}}^T R\right).
	\end{equation}
	
	\subsection{Union and Intersection of Convex Rigid-Bodies}
	\label{sect:UnionAndIntersection}
	
	Recall that the signed distance function $\psi$ is a subset of the functions that can represent the interface implicitly. In this representation, we have many accessible geometric tools including boolean operations for advanced constructive solid geometry (CSG).  These boolean operations include the union, intersection, and complement, and the resulting rigid-bodies will be composed of the convex rigid-bodies in the previous discussion. However, they will no longer be convex in general after the boolean operations. Therefore, $\bm{\rho}_C \in \partial \BB$, the closest point to the plane, is not necessarily unique for certain configurations including configurations of impact. Nevertheless, we include this discussion because the LGVCI will still generally capture the dynamics where the collision set $\BB \cap \PP_{\tilde{\bm{n}}}'$ is a singleton and the CSG is minimal.
	
	In this section, we only consider two rigid-bodies for the union and intersection and discuss the collision detection and its partial derivatives for the resulting, constructed body. Notably, the new rigid-body must have its center of mass coinciding with the origin. Additional union/intersection of bodies can be considered but it should be minimal. This should be done with care due to the center of mass and its geometry which could significantly increase the chance of non-singleton intersection at the point of impact. 
		
	Suppose $\psi_1$ and $\psi_2$ are two SDFs whose corresponding convex bodies, $\BB_1$ and $\BB_2$, do not necessarily have the centers of mass at the origin. Let $\BB_1 \cap \BB_2 \neq \emptyset$, and suppose $\BB_{\cup} = \BB_1 \cup \BB_2$ is their union with a center of mass coinciding with the origin. Then the corresponding SDF $\psi_\cup : \R^3 \to \R$ is defined by
	\begin{equation}
	\label{eqn:SDFUnion}
		\psi_\cup(\bm{x}) = \min\{\psi_1(\bm{x}),\psi_2(\bm{x})\}.
	\end{equation}
	Furthermore, its gradient is given by
	\begin{equation}
	\label{eqn:GradSDFUnion}
		\nabla \psi_\cup(\bm{x})
		=
		\left\{
		\begin{array}{lr}
			\nabla \psi_1(\bm{x}) 	&	\text{if } \psi_1(\bm{x}) < \psi_2(\bm{x}),	\\
			\nabla \psi_2(\bm{x}) &	\text{if } \psi_2(\bm{x}) < \psi_1(\bm{x}),	\\
			\nabla \psi_1(\bm{x})	& 	\text{if } \psi_1(\bm{x}) = \psi_2(\bm{x}) \text{ and } \nabla \psi_1(\bm{x}) = \nabla \psi_2(\bm{x}), \\
			\text{Undefined}		&	\text{if } \psi_1(\bm{x}) = \psi_2(\bm{x}) \text{ and } \nabla \psi_1(\bm{x}) \neq \nabla \psi_2(\bm{x}).
		\end{array}
		\right.
	\end{equation}
	Recall that the gradient is needed to resolve the impact, so we would like to avoid the last case, which does not arise in most CSG. The case arise when $\bm{x} \in \partial \BB_1 \cap \partial \BB_2$ is the singleton for the intersection at impact and $\nabla \psi_1(\bm{x}) \neq \nabla \psi_2(\bm{x})$. However, this could arise in unusual examples of union such as a small ellipsoid completely sitting inside a larger ellipsoid where their boundaries share a point. 
	
	For the intersection of the bodies, $\BB_\cap = \BB_1 \cap \BB_2$, we also assume that its center of mass is at the origin. The corresponding SDF $\psi_\cap : \R^3 \to \R$ is defined as
	\begin{equation}
	\label{eqn:SDFIntersection}
		\psi_\cap(\bm{x}) = \max\{\psi_1(\bm{x}),\psi_2(\bm{x})\}.
	\end{equation}
	The gradient is given by
	\begin{equation}
	\label{eqn:GradSDFIntersection}
		\nabla \psi_\cap(\bm{x})
		=
		\left\{
		\begin{array}{lr}
			\nabla \psi_1(\bm{x}) 	&	\text{if } \psi_1(\bm{x}) > \psi_2(\bm{x}),	\\
			\nabla \psi_2(\bm{x}) &	\text{if } \psi_2(\bm{x}) > \psi_1(\bm{x}),	\\
			\nabla \psi_1(\bm{x})	& 	\text{if } \psi_1(\bm{x}) = \psi_2(\bm{x}) \text{ and } \nabla \psi_1(\bm{x}) = \nabla \psi_2(\bm{x}), \\
			\text{Undefined}		&	\text{if } \psi_1(\bm{x}) = \psi_2(\bm{x}) \text{ and } \nabla \psi_1(\bm{x}) \neq \nabla \psi_2(\bm{x}).
		\end{array}
		\right.
	\end{equation}
	For this gradient, the last case is as similarly unlikely as the improbable configurations discussed for the general convex rigid-body. Put another way, the gradient is generically well-defined.
	
	Lastly, the complement is also introduced since it is useful for CSG. Given a convex rigid-body $\BB$ and the SDF $\psi$, the SDF for the complement $\BB^{\mathrm{C}}$ is given by
	\begin{equation}
	\label{eqn:SDFComplement}
		\psi_{\mathrm{C}}(\bm{x}) = -\psi(\bm{x}).
	\end{equation}
	
\section{Numerical Experiments}
\label{sect:NumericalExp}
	
	 Numerical experiments are performed using Algorithm \ref{alg:algShort} for the following four hybrid systems consisting of different rigid-bodies and planes:
	\begin{enumerate}[label={(\Roman*):}]
		\item Triaxial ellipsoid over the horizontal plane.
		\item Triaxial ellipsoid over tilted plane.
		\item Union of ellipsoids over the horizontal plane.
		\item A cube over the horizontal plane.
	\end{enumerate}
	
	Recall that the horizontal plane has the normal vector $\bm{n}^T = (0, 0, 1)$ and passes through the origin. In Case II, the tilted plane is a two-degree counterclockwise rotation of the horizontal plane about the $y$-axis. Furthermore, the rigid-bodies are described by the parameters given in Table \ref{tab:params}.
	
	\begin{table}[hb]
	\begingroup
	\begin{tabular}{c|ccc}
					&		Rigid-Body Properties 	&	Center(s) &	Inertia Matrix ($J$) \\ \hline
		Case I 	&
			$I_\epsilon = \text{diag}(2,3,4)$ 	&
  			$(0,0,0)$ &
  			$\text{diag}(5,4,2.6)$  \\ \hline
		Case II 	&
  			$I_\epsilon = \text{diag}(2,3,4)$	&
  			$(0,0,0)$ &
 			$\text{diag}(5,4,2.6)$ \\ \hline
		Case III	&
  			\begin{tabular}{@{}c@{}}$I_{\epsilon_{1}} = \text{diag}(3,4,5)$ \\ $I_{\epsilon_{2}} = \text{diag}(6,1,1)$	\end{tabular}		&
  			\begin{tabular}{@{}c@{}}$(1.5,0,0) + \bm{c}$ \\ $(-4.5,0,0) + \bm{c}$ \end{tabular} &
  			$\begin{bsmallmatrix}  7.5932718 & 6 \mathrm{e}{-7} & -4 \mathrm{e}{-7}\\- 6 \mathrm{e}{-7} & 9.9326434 & 0 \\4 \mathrm{e}{-7} & 0 & 8.2731252 \end{bsmallmatrix}$ \\ \hline
		Case IV &
			$s = 2\sqrt{3}, \epsilon = 10^{-13}$ &
			$(0,0,0)$ &
			$\text{diag}(2,2,2)$
	\end{tabular}
	\endgroup
	\caption{Parameters for the hybrid systems where $\bm{c} = (-0.9937128,0,0)$.}
	\label{tab:params}
	\end{table}
	
	In Case III, the body is a union of two ellipsoids whose centers are shifted by $\bm{c}$ so that the centroid coincide with the origin. Lastly, in Case IV, the rigid-body is a cube whose side-length is denoted by $s$, and its $\epsilon$-rounding parameter is $\epsilon = 10^{-13}$. Table \ref{tab:params} also includes the standard inertia matrices $J$, and these matrices in Case I, II, and IV are computed using standard formulas assuming constant uniform density: Namely, 
	\begin{align*}
		J_{\text{Ellipsoid}} 
			&=\frac{1}{5} m \,\text{diag}(b^2+c^2,a^2+c^2,a^2+b^2),
		& J_{\text{Cube}}
			&= \frac{1}{6}ms^2 I_3,
	\end{align*}
	where $I_\epsilon = \text{diag}(a,b,c)$ give the parameters for $J_{\text{Ellipsoid}}$, and $m$ is the total mass of the rigid-body. In all of the cases, $m = 1$ including Case III where the standard matrix is computed and scaled reasonably using the built-in function \texttt{MomentOfInertia} in Mathematica 12. We fixed the following parameters for all cases including $g = 9.80665$ for the gravitational acceleration, $h=0.01$ for the timestep, and $\epsilon_{\text{tol}} = 10^{-15}$ for tolerance.
	
	For all four cases, the initial values are 
	\begin{align*}
		\bm{x}_0
			&= (0,0,10)^T, 
		&\bm{\gamma}_0
			&= (2,2,10)^T, 
		&\bm{\Pi}_0
			&= (4,-4,4)^T, 
		&R_0 
			&= I_3.
	\end{align*}
	
	Lastly, note that the Zeno phenomenon does not arise in our four simulations because all the rigid-bodies are $C^\infty$ smooth, including the cube which was regularized using $\epsilon$-rounding. However, the Zeno phenomenon could be observed if we considered a non-smooth rigid-body whose boundary is $C^k$ for some finite $k \geq 2$.
	
	\subsection{Snapshots}
	\label{sect:Snapshots}
	
	\begin{figure}[htbp]
		\centering
		\begin{subfigure}[b]{0.475 \textwidth}
			\centering
			\includegraphics[width=\textwidth]{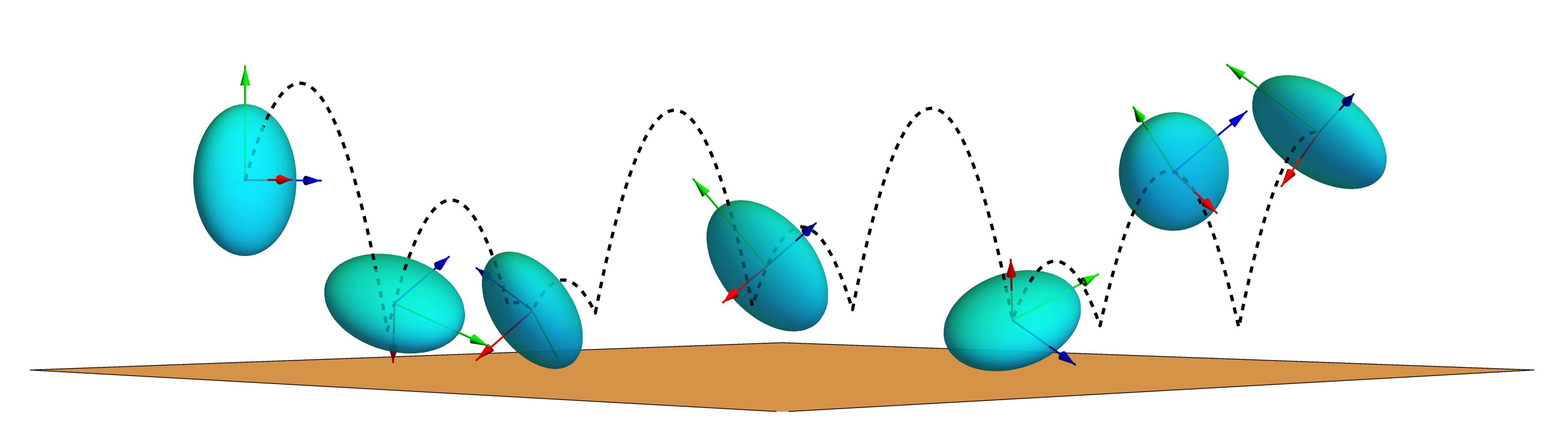}
			\caption{Triaxial ellipsoid over the horizontal plane}
			\label{fig:triaxial_snaps}
		\end{subfigure}
		\hfill
		\begin{subfigure}[b]{0.475 \textwidth}
			\centering
			\includegraphics[width=\textwidth]{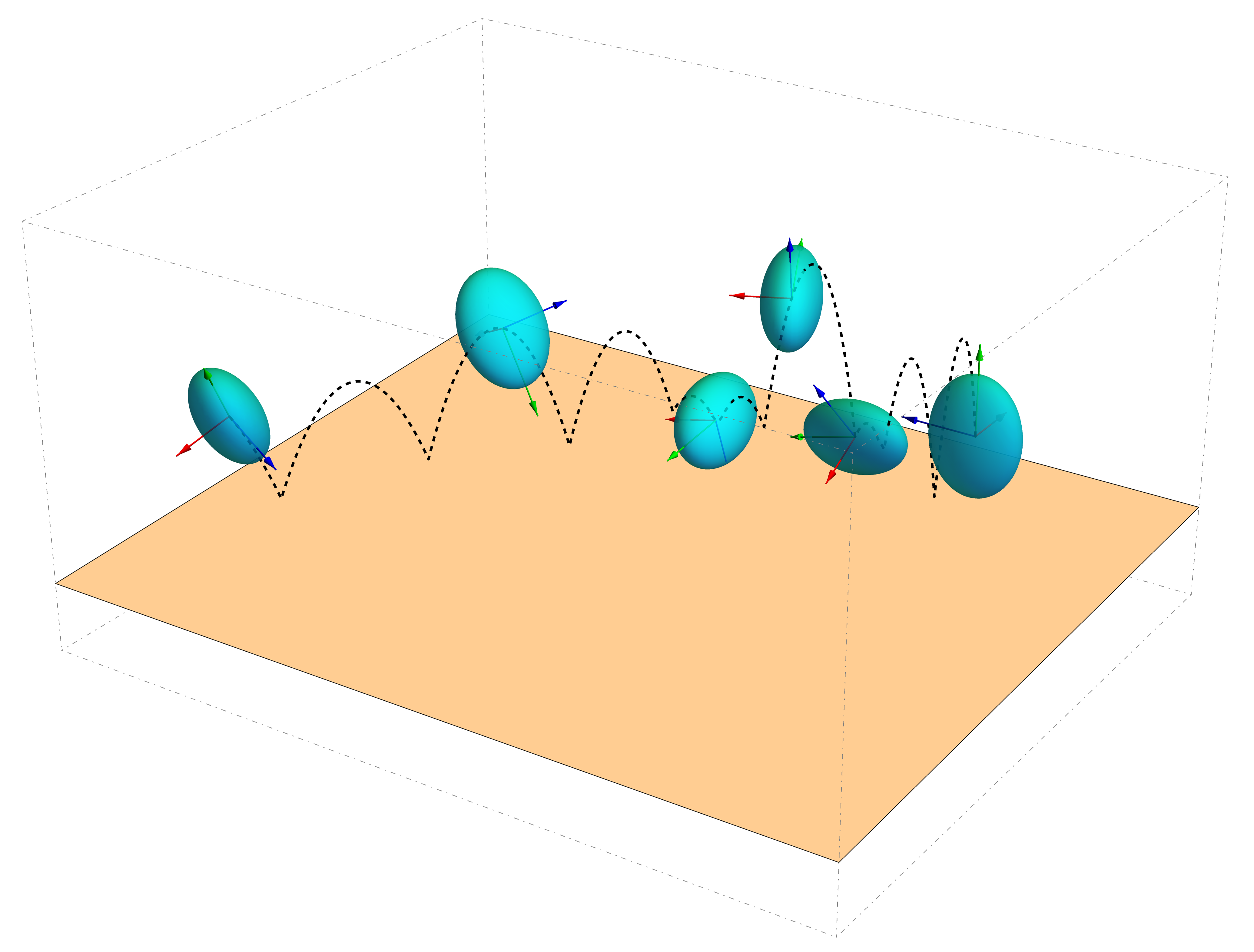}
			\caption{Triaxial ellipsoid over a tilted plane}
			\label{fig:tilted_snaps}
		\end{subfigure}
		\vskip\baselineskip
		\begin{subfigure}[b]{0.475 \textwidth}
			\centering
			\includegraphics[width=\textwidth]{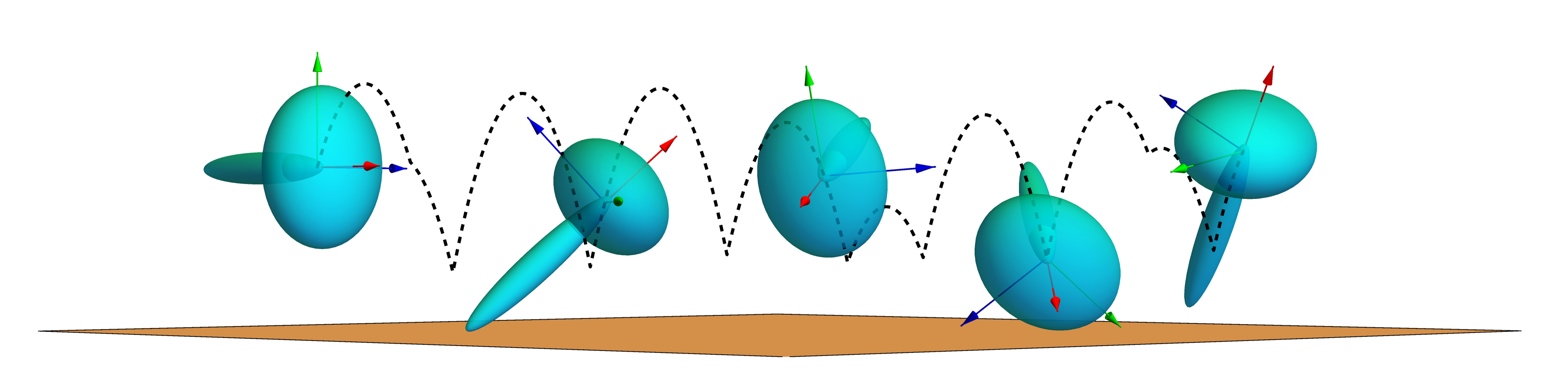}
			\caption{Union of ellipsoids over the horizontal plane}
			\label{fig:union_snaps}
		\end{subfigure}
		\hfill
		\begin{subfigure}[b]{0.475 \textwidth}
			\centering
			\includegraphics[width=\textwidth]{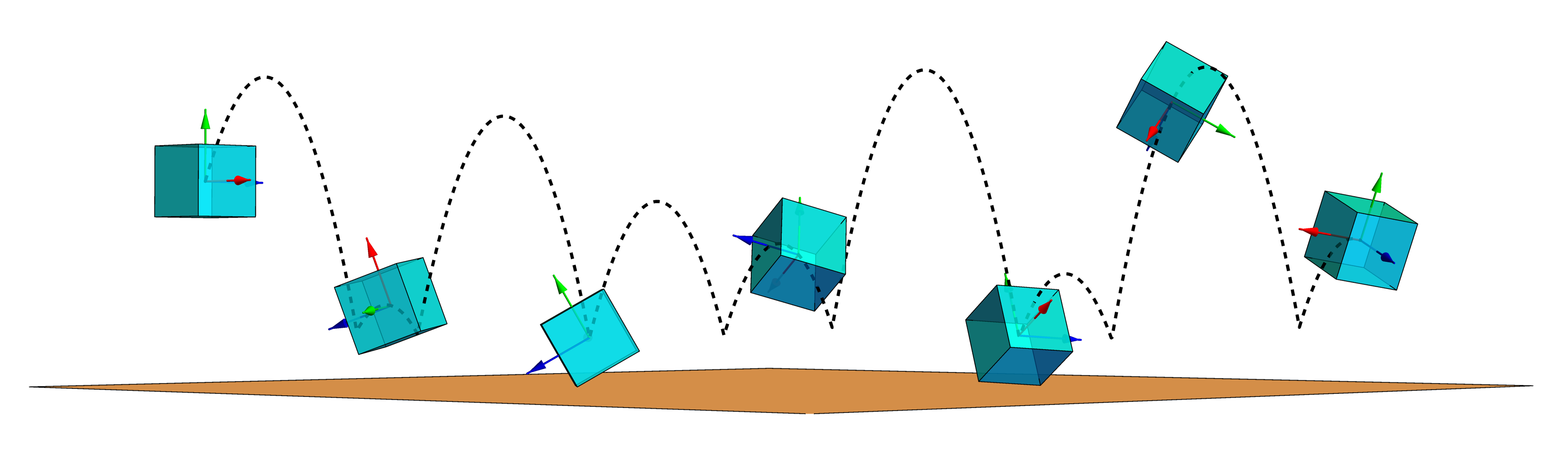}
			\caption{A cube over the horizontal plane}
			\label{fig:cube_snaps}
		\end{subfigure}
		\caption{Snapshots of configurations, including some collisions, of the four hybrid systems.}
		\label{fig:snapshots}
	\end{figure}
	
	Fortunately, our numerical experiments for the hybrid systems can be visualized as a sequence of snapshots, as shown in Figure \ref{fig:snapshots}. We can make a number of collective observations for Case I, II, and IV, which share the horizontal plane in common. In particular, the top-down view of the path of the center of mass for each rigid-body is a straight line. This is immediate because the updates on the linear momenta away from collisions are only affected by the gravitational direction, the $z$-component. Also, the instantaneous update on the linear momenta after the collision is only dependent on $\frac{\partial \Phi}{\partial \bm{x}}^T = \bm{n}^T = (0,0,1)$, affecting only the $z$-component again. Hence, only Case III with a tilted plane exhibits a curved path for the center of mass when viewed top-down.
	
	\subsection{Transfer of Energy}
	\label{sect:TransferofEnergy}
	\begin{figure}[p]
		\centering
		\begin{subfigure}[b]{1.\textwidth}
		\includegraphics{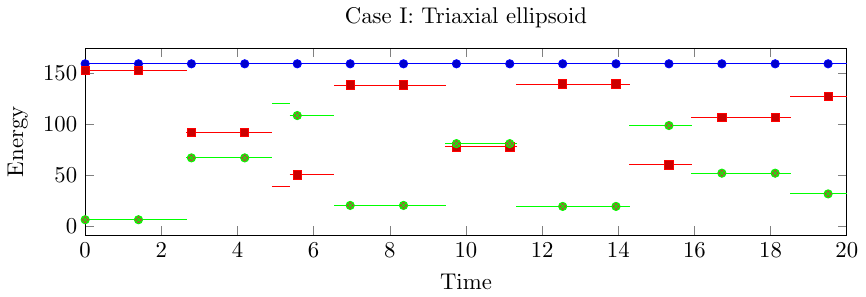}
		\end{subfigure}
		\begin{subfigure}[b]{1.\textwidth}
		\includegraphics{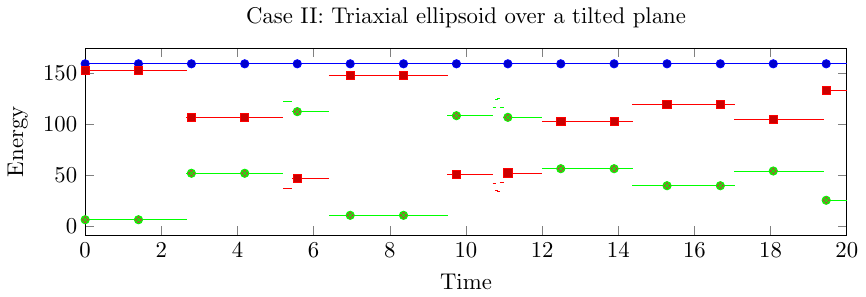}
		\end{subfigure}
		\begin{subfigure}[b]{1.\textwidth}
		\includegraphics{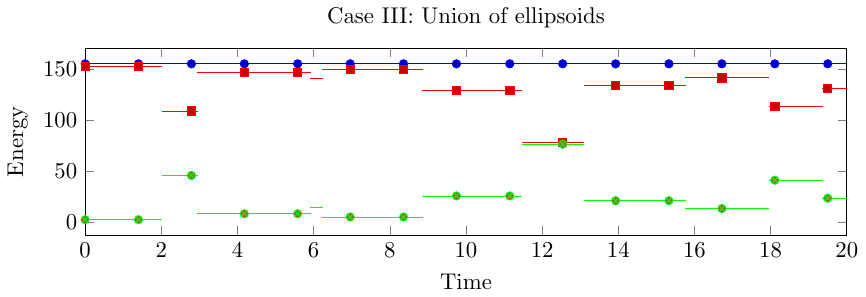}
		\end{subfigure}
		\begin{subfigure}[b]{1.\textwidth}
		\includegraphics{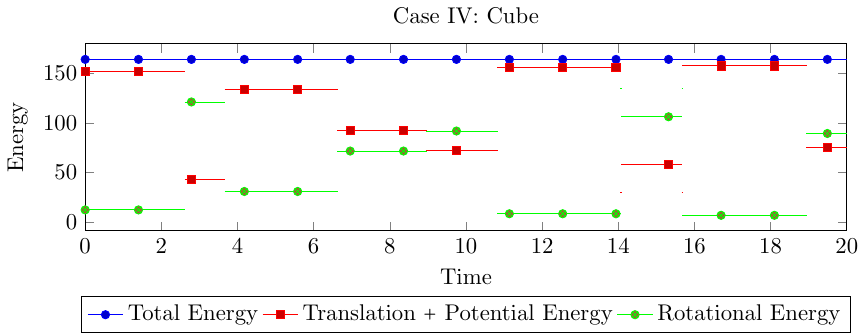}
		\end{subfigure}
		\caption{Plots of total energy (T.E.), translational $+$ potential energy (T.P.E), and rotational energy (R.E) of different hybrid systems demonstrating the exchange of energies between T.P.E and R.E. after each collision.}
		\label{fig:energy_plots}
	\end{figure}
	
	Moreover, each hybrid system is closed, and their collisions are elastic. Therefore, the total energy within each system is conserved, which can be observed in Figure \ref{fig:energy_plots}, where the data is taken over a short time interval. By conservation of angular momentum, the rotational kinetic energy (R.E.) is constant in the time intervals that are away from the collisions, and so the translational kinetic $+$ potential energy (T.P.E.) is also constant. However, we observe a transfer of energies between T.P.E and R.E. after each collision because the normal force, perpendicular to the plane, is no longer passing through the center of mass; as a result, this not only imparts an instantaneous impulse on the center of mass, changing its linear momentum but also an instantaneous angular impulse on the body, changing the angular momentum after the collision. By the conservation of energy in the system, the sum of T.P.E. and R.E. still gives the total energy, but these changes in momenta induce the transfer of energy between T.P.E. and R.E., which are the jumps in Figure \ref{fig:energy_plots}. 
	
	\subsection{Short-Term \& Long-Term Behaviors}
	\label{sect:ShortLongTermBehaviors}
	\begin{figure}[p]
		\centering
		\begin{subfigure}[b]{1.\textwidth}
		\includegraphics{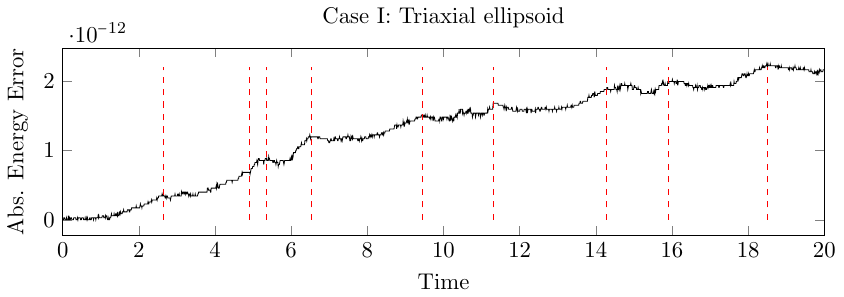}
		\end{subfigure}
		\begin{subfigure}[b]{1.\textwidth}
		\includegraphics{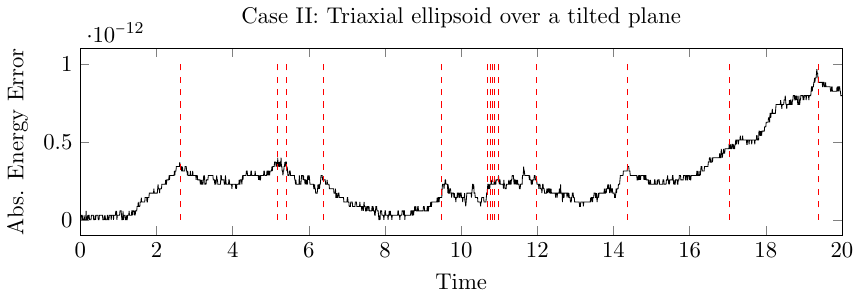}
		\end{subfigure}
		\begin{subfigure}[b]{1.\textwidth}
		\includegraphics{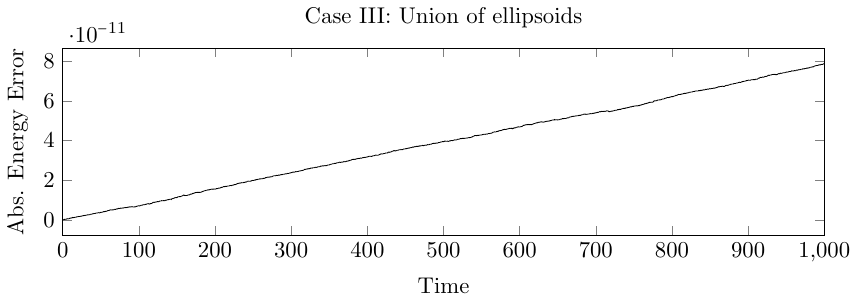}
		\end{subfigure}
		\begin{subfigure}[b]{1.\textwidth}
		\includegraphics{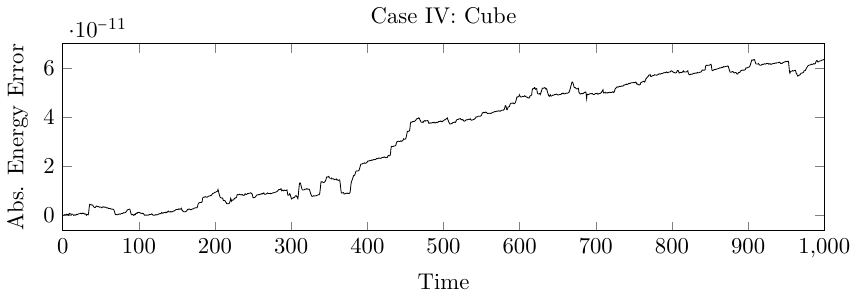}
		\end{subfigure}
		\caption{Short-term absolute energy errors are shown for the systems of triaxial ellipsoid and its counterpart with the tilted plane. Long-term absolute energy errors are shown for the systems of union of ellipsoids and cube, demonstrating energy drift after many collisions.}
		\label{fig:energyerr_plots}
	\end{figure}
	
	Both short-term and long-term energy behaviors are shown for our hybrid systems in Figure \ref{fig:energyerr_plots}. For Case I and Case II, short-term energy behaviors are shown to illustrate how collisions affect the conservation properties. In particular, the red-dashed lines in both figures indicate the times of the impacts, and we see that the trend of the absolute energy errors shift after each impact; however, the magnitude of the error about each collision is essentially the same, so the jump conditions $\tilde{F}_{\jump}$ still produce near energy conservation in the same manner as the discrete flow away from impacts. 
	
	We also explore the long-term energy behaviors in Case III and Case IV with $10^5$ integration steps; there are roughly $806$ collisions that occur in Case III and roughly $652$ collisions in Case IV. Overall, the LGCVI exhibits good long-time near energy conservation; however, there is a drift in both absolute energy errors, which is attributed to the numerous collisions in each hybrid system. This is the case because the discrete Lagrangian variational mechanics away from the collisions uses a fixed timestep $h$, and preserves a modified Hamiltonian up to an exponentially small error for exponentially long times by virtue of backward error analysis. However, the modified Hamiltonian is an asymptotic expansion in $h$, and a different $h$ is taken during collisions in order to resolve the collision time accurately. Hence, a slightly different modified Hamiltonian is preserved, resulting in a small energy drift after each collision.
		
	Interestingly, we observe that this drift in energy appears to be roughly monotonic; otherwise, the absolute energy error plots would have negative trends for long periods of time as well. This could be the consequence of our use of the bisection method to resolve the collision time and our bias in choosing a configuration that is admissible, so we choose an approximation such that $\Phi(\bm{x}_i,R_i) \approx 0$ is always positive.
	
	\subsection{Sensitivity to Initial Conditions}
	\label{sect:SensitiveToInitials}
	
	The hybrid systems involving rigid-bodies that are not spherical are, in general, sensitive to initial conditions by nature. As a result, we expect our collision algorithm to capture this when we apply slight changes in the initial position or attitude of the rigid-body, while maintaining the same linear and angular momentum. By following the path of the center of mass height (C.M. height) over the horizontal plane, we observe this sensitivity in Figure \ref{fig:CMsensitive}. The black dashed line is the path of C.M. height for the original initial conditions $(\bm{x_0},R_0)$. The path of C.M. height of the slightly perturbed position $(\bm{x_0}+\bm{\delta x}, R_0)$ and attitude $(\bm{x_0}, R_0\delta R)$ are plotted in square-blue and circle-red lines, respectively; the perturbations are set as
		$
		\bm{\delta x} = (0,0,10^{-8})^T
		$
		and 
		$\delta R = 
		\begin{bsmallmatrix}
			\cos \theta	&		0		&		\sin \theta	\\
			0					&		1		&		0				\\
			-\sin \theta	&		0		&		\cos \theta
		\end{bsmallmatrix}
		$
	where $\theta = 10^{-8}\pi$. By observing the C.M. heights, the discrete flows of the two perturbed systems appear to be the same as the unperturbed system for the first few collisions; however, the flows noticeably diverge after the eighth collision with the plane, and the plots in Figure \ref{fig:CMsensitive} clearly illustrate this.
	
	\begin{figure}[htbp]
		\centering
		\includegraphics{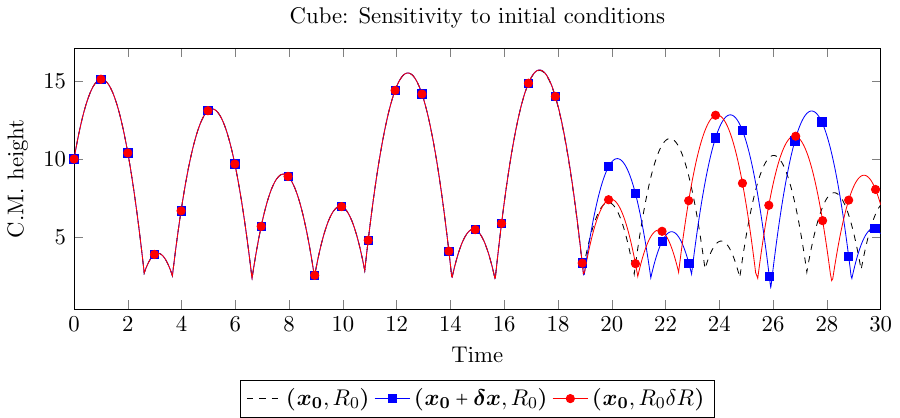}
		\caption{Plot of the height of the center of mass of the cube, for a given initial condition and two slight perturbations in position and attitude, to illustrate that the sensitivity to initial conditions is captured by our proposed collision algorithm.}
		\label{fig:CMsensitive}
	\end{figure}
	
	To ensure that the divergence of these trajectories is intrinsic to the nature of the system instead of being a consequence of the design of the algorithm, we look at the absolute errors: 
	\[
		\textrm{Err}_{\textrm{per}}(t) = \|\bm{x}_\textrm{per}(t) - \bm{x}(t)\|_2 + \|R_\textrm{per}(t)- R(t)\|_2,
	\]
	where $(\bm{x}_\textrm{per}(t), R_\textrm{per}(t))$ represent the discrete flow for the perturbed initial conditions either in position $(\bm{x}_\textrm{pos}(t),R_\textrm{pos}(t))$ or in attitude $(\bm{x}_\textrm{att}(t),R_\textrm{att}(t))$. Of course $(\bm{x}(t),R(t))$ is the discrete flow for the system with unperturbed initial conditions $(\bm{x}_0,R_0)$. Note also that the matrix norm above is induced by the 2-norm. From the small perturbations, the initial absolute errors are 
	$
		\textrm{Err}_{\textrm{pos}}(0) = 10^{-8}
	$
	and
	$
		\textrm{Err}_{\textrm{att}}(0) \approx 1.75 \cdot 10^{-8},
	$
	and these errors change in magnitude after each collision as seen in Figure \ref{fig:CMsensitiveErr}, illustrating that the differences between the trajectories are amplified after each collision. Eventually, the absolute errors saturate because the perturbed systems become unrelated to the original system beside conserving approximately the same total energy.
	
		\begin{figure}[htbp]
	\centering
	\includegraphics{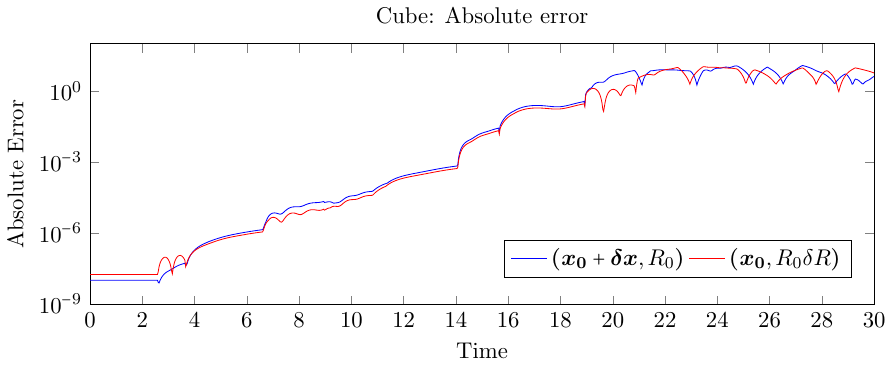}
		\caption{Absolute Error $\|\bm{x}_\text{per}(t) - \bm{x}(t)\|_2 + \|R_\text{per}(t)- R(t)\|_2$ plots for perturbed initial position and attitude in blue and red, respectively. The matrix norm is the induced 2-norm.}
		\label{fig:CMsensitiveErr}
	\end{figure}

\section{Conclusions and Future Directions}
\label{sect:Conclusion}
	We have developed an algorithm to simulate a class of hybrid systems that is the extension of the classical bouncing ball hybrid system. Now, this class of hybrid systems in 3-dimensions comprise of a general convex rigid-body bouncing elastically on a horizontal or tilted plane. The resulting algorithm is called a Lie group variational collision integrator (LGVCI) which is based on a combination of the work done in nonsmooth Lagrangian mechanics for collision variational integrators and Lie group variational integrators. Consequently, the LGVCI provide a combination of discrete flow maps for the hybrid systems away from the points of collision and jump conditions to update the instantaneous state after each collision. They are also symplectic and momentum-preserving by design, and they are heavily dependent on the collision detection function $\Phi$.
	
	Initially, we developed the algorithm for a model problem involving an ellipsoid and a horizontal plane. However, our theory easily extended to more general systems involving tilted planes, unions and/or intersections of convex rigid-bodies by modifying and constructing the collision detection function. Furthermore, we introduced a convenient and straightforward regularization using $\epsilon$-rounding for the collision responses of convex rigid-bodies with corners. In general, these bodies are convex polyhedra. This development avoids the need for complicated nonlinear convex analysis of the corner impacts involving different inclusions and the computation of normal/tangent cones at the configuration of impact. Consequently, the regularization provides a unique deterministic response after the collision using the jump conditions we have derived, while still exhibiting the full range of potential outcomes that the formulation involving differential inclusions and normal/tangent cones would exhibit by varying the initial conditions slightly.
		
	For future research,  we intend to extend our variational collision integrators approach to hybrid systems with external and contact forces, as briefly discussed in \citep{fetecau2003nonsmooth} for the continuous setting. Moreover, we can consider elastic bodies (e.g., hyperelastic materials) which can be formalized in the discrete variational method using the appropriate elastic potential energies. From here, we may consider the analysis of collisions for convex-nonconvex rigid-bodies and then construct numerical integrators for these extensions. This is naturally a topic of interest as they more closely resemble real-world, complex systems that exhibit energy dissipation during collisions.
		
	There are also interesting applications to geometric control and optimal control on the Special Euclidean group $SE(3)$ and its submanifolds with boundary, which is the setting of this paper. We propose to study these geometric control problems for two reasons: First, the configuration space $SE(3)$ is global, and its elements represent configurations of a real-world object uniquely, which is advantageous for describing dynamical systems analytically and for constructing numerical methods based on variational principles. This is particularly critical when considering systems that exhibit large rotational motions that cannot be effectively described using local coordinate based approaches. Second, novel algorithms that are both efficient and geometrically exact can be developed for systems on $SE(3)$ with unilateral constraints, which would extend the results demonstrated in \citep{taeyoung2006optimal} for bilateral constraints.

\bibliographystyle{siamplain}
\bibliography{collision_ML}

\begin{thebibliography}{10}

\bibitem{brach1984friction}
{\sc R.~M. Brach}, {\em {Friction, Restitution, and Energy Loss in Planar
  Collisions}}, Journal of Applied Mechanics, 51 (1984), pp.~164--170.

\bibitem{brach1989rigid}
{\sc R.~M. Brach}, {\em {Rigid Body Collisions}}, Journal of Applied Mechanics,
  56 (1989), pp.~133--138.

\bibitem{brach2001mechanical}
{\sc R.~M. Brach}, {\em Mechanical Impact Dynamics: Rigid Body Collisions},
  John Wiley and Sons, New York, 1991.

\bibitem{brogliato2016nonsmooth}
{\sc B.~Brogliato}, {\em Nonsmooth Mechanics: Models, Dynamics and Control},
  Springer-Verlag, London, third~ed., 2016.

\bibitem{chatterjee1998algebraic}
{\sc A.~Chatterjee and A.~Ruina}, {\em {A New Algebraic Rigid-Body Collision
  Law Based on Impulse Space Considerations}}, Journal of Applied Mechanics, 65
  (1998), pp.~939--951.

\bibitem{chen2022stochastic}
{\sc X.~Chen, A.~B. Cruzeiro, and T.~S. Ratiu}, {\em Stochastic variational
  principles for dissipative equations with advected quantities}, Journal of
  Nonlinear Science, 33 (2022), p.~5.

\bibitem{demoures2016multisymplectic}
{\sc F.~Demoures, F.~Gay-Balmaz, and T.~S. Ratiu}, {\em Multisymplectic
  variational integrators for nonsmooth {L}agrangian continuum mechanics},
  Forum of Mathematics, Sigma, 4 (2016), p.~e19.

\bibitem{dintwa2008finite}
{\sc E.~Dintwa, M.~{Van Zeebroeck}, H.~Ramon, and E.~Tijskens}, {\em Finite
  element analysis of the dynamic collision of apple fruit}, Postharvest
  Biology and Technology, 49 (2008), pp.~260--276.

\bibitem{duruisseaux2023LieFVIN}
{\sc V.~Duruisseaux, T.~P. Duong, M.~Leok, and N.~Atanasov}, {\em Lie group
  forced variational integrator networks for learning and control of robot
  systems}, in Proceedings of The 5th Annual Learning for Dynamics and Control
  Conference, N.~Matni, M.~Morari, and G.~J. Pappas, eds., vol.~211 of
  Proceedings of Machine Learning Research, PMLR, 15--16 Jun 2023,
  pp.~731--744.

\bibitem{fetecau2003nonsmooth}
{\sc R.~C. Fetecau, J.~E. Marsden, M.~Ortiz, and M.~West}, {\em Nonsmooth
  {L}agrangian mechanics and variational collision integrators}, SIAM Journal
  on Applied Dynamical Systems, 2 (2003), pp.~381--416.

\bibitem{flores2022review}
{\sc P.~Flores}, {\em Contact mechanics for dynamical systems: a comprehensive
  review}, Multibody System Dynamics, 54 (2022), pp.~127--177.

\bibitem{gilardi2002literature}
{\sc G.~Gilardi and I.~Sharf}, {\em Literature survey of contact dynamics
  modelling}, Mechanism and Machine Theory, 37 (2002), pp.~1213--1239.

\bibitem{christoph2001nonsmooth}
{\sc C.~Glocker}, {\em Set-Valued Force Laws, Dynamics of Non-Smooth Systems},
  Springer, Berlin, 2001.

\bibitem{goldsmith1960impact}
{\sc W.~Goldsmith}, {\em Impact: The theory and physical behaviour of colliding
  solids}, Edward Arnold, London, 1960.

\bibitem{gotay1997momentum}
{\sc M.~Gotay, J.~Isenberg, J.~E. Marsden, and R.~Montgomery}, {\em Momentum
  maps and the {H}amiltonian structure of classical relativistic field theories
  {I}}, preprint,  (1997).

\bibitem{hertz1896miscellaneous}
{\sc H.~Hertz}, {\em Miscellaneous papers}, Macmillan, 1896.

\bibitem{hunt1975damping}
{\sc K.~H. Hunt and F.~R.~E. Crossley}, {\em {Coefficient of Restitution
  Interpreted as Damping in Vibroimpact}}, Journal of Applied Mechanics, 42
  (1975), pp.~440--445.

\bibitem{johnson2014discontinuous}
{\sc G.~Johnson, S.~Leyendecker, and M.~Ortiz}, {\em Discontinuous variational
  time integrators for complex multibody collisions}, International Journal for
  Numerical Methods in Engineering, 100 (2014), pp.~871--913.

\bibitem{johnson1985contact}
{\sc K.~L. Johnson}, {\em Contact Mechanics}, Cambridge University Press,
  Cambridge, UK, 1985.

\bibitem{kane1999symplectic}
{\sc C.~Kane, J.~E. Marsden, and M.~Ortiz}, {\em {Symplectic-energy-momentum
  preserving variational integrators}}, Journal of Mathematical Physics, 40
  (1999), pp.~3353--3371.

\bibitem{kitamura2002FEM}
{\sc O.~Kitamura}, {\em {FEM} approach to the simulation of collision and
  grounding damage}, Marine Structures, 15 (2002), pp.~403--428.
\newblock Ship Collisions and Grounding.

\bibitem{lee2008thesis}
{\sc T.~Lee}, {\em Computational geometric mechanics and control of rigid
  bodies}, PhD thesis, 2008,
  \url{https://www.proquest.com/dissertations-theses/computational-geometric-mechanics-control-rigid/docview/304573940/se-2}.
\newblock Copyright - Database copyright ProQuest LLC; ProQuest does not claim
  copyright in the individual underlying works; Last updated - 2023-02-23.

\bibitem{lee2007lie}
{\sc T.~Lee, M.~Leok, and N.~H. McClamroch}, {\em Lie group variational
  integrators for the full body problem}, Computer Methods in Applied Mechanics
  and Engineering, 196 (2007), pp.~2907--2924.

\bibitem{Lee2007orbital}
{\sc T.~Lee, M.~Leok, and N.~H. McClamroch}, {\em Lie group variational
  integrators for the full body problem in orbital mechanics}, Celestial
  Mechanics and Dynamical Astronomy, 98 (2007), pp.~121--144,
  \url{https://doi.org/10.1007/s10569-007-9073-x},
  \url{https://doi.org/10.1007/s10569-007-9073-x}.

\bibitem{taeyoung2006optimal}
{\sc T.~Lee, N.~H. McClamroch, and M.~Leok}, {\em Optimal control of a rigid
  body using geometrically exact computations on {SE(3)}}, in Proceedings of
  the 45th IEEE Conference on Decision and Control, 2006, pp.~2710--2715.

\bibitem{lew2003variational}
{\sc A.~J. Lew}, {\em Variational time integrators in computational solid
  mechanics}, PhD thesis, California Institute of Technology, 2003.

\bibitem{leyendecker2007dmocConstrainedMulti}
{\sc S.~Leyendecker, S.~Ober-Bl\"{o}baum, J.~E. Marsden, and M.~Ortiz}, {\em
  {Discrete Mechanics and Optimal Control for Constrained Multibody Dynamics}},
  in 6th International Conference on Multibody Systems, Nonlinear Dynamics, and
  Control, Parts A, B, and C, vol.~Volume 5 of International Design Engineering
  Technical Conferences and Computers and Information in Engineering
  Conference, 09 2007, pp.~623--632.

\bibitem{leyendecker2010optimal}
{\sc S.~Leyendecker, S.~Ober-Bl\"{o}baum, J.~E. Marsden, and M.~Ortiz}, {\em
  Discrete mechanics and optimal control for constrained systems}, Optimal
  Control Applications and Methods, 31 (2010), pp.~505--528.

\bibitem{leyendecker2013structure}
{\sc S.~Leyendecker, D.~Pekarek, and J.~E. Marsden}, {\em Structure Preserving
  Optimal Control of Three-Dimensional Compass Gait}, Springer Berlin
  Heidelberg, Berlin, Heidelberg, 2013, pp.~99--116.

\bibitem{limebeer2020variational}
{\sc D.~J.~N. Limebeer, S.~Ober-Blöbaum, and F.~H. Farshi}, {\em Variational
  integrators for dissipative systems}, IEEE Transactions on Automatic Control,
  65 (2020), pp.~1381--1396.

\bibitem{markeev2008}
{\sc A.~P. Markeev}, {\em The dynamics of a rigid body colliding with a rigid
  surface}, Regular and Chaotic Dynamics, 13 (2008), pp.~96--129.

\bibitem{marsden1998multisymplectic}
{\sc J.~E. Marsden, G.~W. Patrick, and S.~Shkoller}, {\em Multisymplectic
  geometry, variational integrators, and nonlinear pdes}, Communications in
  Mathematical Physics, 199 (1998), pp.~351--395.

\bibitem{MaRa1999}
{\sc J.~E. Marsden and T.~S. Ratiu}, {\em Lagrangian Mechanics}, Springer New
  York, New York, NY, 1999.

\bibitem{mi1996}
{\sc B.~Mirtich}, {\em Fast and accurate computation of polyhedral mass
  properties}, Journal of Graphics Tools, 1 (1996), pp.~31--50.

\bibitem{najafabadi2008generalization}
{\sc S.~A.~M. Najafabadi, J.~Kövecses, and J.~Angeles}, {\em {Generalization
  of the Energetic Coefficient of Restitution for Contacts in Multibody
  Systems}}, Journal of Computational and Nonlinear Dynamics, 3 (2008),
  p.~041008.

\bibitem{newton1833philosophiae}
{\sc I.~Newton}, {\em Philosophiae naturalis principia mathematica}, vol.~1, G.
  Brookman, 1833.

\bibitem{oberblobaum2011DMOC}
{\sc S.~Ober-Bl\"{o}baum, O.~Junge, and J.~E. Marsden}, {\em Discrete mechanics
  and optimal control: An analysis}, ESAIM: Control, Optimisation and Calculus
  of Variations, 17 (2011), p.~322–352.

\bibitem{osher2003signed}
{\sc S.~Osher and R.~Fedkiw}, {\em Signed Distance Functions}, Springer New
  York, New York, NY, 2003.

\bibitem{pekare2014projection}
{\sc D.~Pekarek}, {\em Projection-based modeling and simulation of nonsmooth
  {H}amiltonian mechanics}, in 53rd IEEE Conference on Decision and Control,
  2014, pp.~6037--6043.

\bibitem{pekarek2007discrete}
{\sc D.~Pekarek, A.~D. Ames, and J.~E. Marsden}, {\em Discrete mechanics and
  optimal control applied to the compass gait biped}, in 2007 46th IEEE
  Conference on Decision and Control, 2007, pp.~5376--5382.

\bibitem{pekarek2012variationalnonsmooth}
{\sc D.~Pekarek and T.~D. Murphey}, {\em Variational nonsmooth mechanics via a
  projected {H}amilton's principle}, in 2012 American Control Conference (ACC),
  2012, pp.~1040--1046.

\bibitem{pekarek2013projected}
{\sc D.~Pekarek and T.~D. Murphey}, {\em A projected lagrange-d'alembert
  principle for forced nonsmooth mechanics and optimal control}, in 52nd IEEE
  Conference on Decision and Control, 2013, pp.~7777--7784.

\bibitem{pekarek2010variational}
{\sc D.~N. Pekarek}, {\em Variational methods for control and design of bipedal
  robot models}, PhD thesis, California Institute of Technology, 2010.

\bibitem{PfeifferGlocker1996MultiBody}
{\sc F.~Pfeiffer and C.~Glocker}, {\em Multibody Dynamics With Unilateral
  Contacts}, John Wiley and Sons, New York, 1996.

\bibitem{poisson1838traite}
{\sc S.~D. Poisson}, {\em Trait{\'e} de m{\'e}canique}, vol.~1, Soci{\'e}t{\'e}
  belge de librairie, 1838.

\bibitem{polukoshko2013}
{\sc S.~Polukoshko}, {\em Rigid body motion conversion due to collision}, in
  MEMS and Mechanics, vol.~705 of Advanced Materials Research, Trans Tech
  Publications Ltd, 7 2013, pp.~540--545.

\bibitem{portillo2017energy}
{\sc D.~Portillo, J.~C. García~Orden, and I.~Romero}, {\em
  Energy–entropy–momentum integration schemes for general discrete
  non-smooth dissipative problems in thermomechanics}, International Journal
  for Numerical Methods in Engineering, 112 (2017), pp.~776--802.

\bibitem{routh1897dynamics}
{\sc E.~J. Routh}, {\em Dynamics of a System of Rigid Bodies}, Macmillan and
  Co, London, sixth~ed., 1897.

\bibitem{sha2012nonlinear}
{\sc Y.~Sha and H.~Hao}, {\em Nonlinear finite element analysis of barge
  collision with a single bridge pier}, Engineering Structures, 41 (2012),
  pp.~63--76.

\bibitem{skrinjar2018review}
{\sc L.~Skrinjar, J.~Slavič, and M.~Boltežar}, {\em A review of continuous
  contact-force models in multibody dynamics}, International Journal of
  Mechanical Sciences, 145 (2018), pp.~171--187.

\bibitem{stronge1991unraveling}
{\sc W.~J. Stronge}, {\em {Unraveling Paradoxical Theories for Rigid Body
  Collisions}}, Journal of Applied Mechanics, 58 (1991), pp.~1049--1055.

\bibitem{stronge1994planar}
{\sc W.~J. Stronge}, {\em Planar impact of rough compliant bodies},
  International Journal of Impact Engineering, 15 (1994), pp.~435--450.

\bibitem{stronge2000contact}
{\sc W.~J. Stronge}, {\em Contact problems for elasto-plastic impact in
  multi-body systems}, in Impacts in Mechanical Systems, B.~Brogliato, ed.,
  Berlin, Heidelberg, 2000, Springer Berlin Heidelberg, pp.~189--234.

\bibitem{stronge2000impact}
{\sc W.~J. Stronge}, {\em Impact Mechanics}, Cambridge University Press,
  Cambridge, UK, 2000.

\bibitem{stronge1989rigid}
{\sc W.~J. Stronge and W.~Johnson}, {\em Rigid body collisions with friction},
  Proceedings of the Royal Society of London. Series A: Mathematical and
  Physical Sciences, 431 (1990), pp.~169--181.

\bibitem{wei1995finite}
{\sc H.~Wei and Z.~Yida}, {\em Finite element analysis on collision between two
  moving elastic bodies at low velocities}, Computers \& Structures, 57 (1995),
  pp.~379--382.

\bibitem{werner2018structure}
{\sc A.~Werner, B.~Henze, M.~Keppler, F.~Loeffl, S.~Leyendecker, and C.~Ott},
  {\em Structure preserving multi-contact balance control for series-elastic
  and visco-elastic humanoid robots}, in 2018 IEEE/RSJ International Conference
  on Intelligent Robots and Systems (IROS), 2018, pp.~1233--1240.

\bibitem{werner2017optimal}
{\sc A.~Werner, B.~Henze, F.~Loeffl, S.~Leyendecker, and C.~Ott}, {\em Optimal
  and robust walking using intrinsic properties of a series-elastic robot}, in
  2017 IEEE-RAS 17th International Conference on Humanoid Robotics (Humanoids),
  2017, pp.~143--150.

\end{thebibliography}

\end{document}